\newtheorem{proposition}{Proposition}[section]
\newtheorem{property}{Assumption}
\newtheorem{thm}{Theorem}
\begin{document}
\title{Soliton resolution for the Harry Dym equation with 
weighted Sobolev initial data}
\author{Lin Deng$^1$,  Zhenyun Qin$^1$\thanks{\ Corresponding author and email address: zyqin@fudan.edu.cn } }
\footnotetext[1]{ \  School of Mathematical Sciences, Fudan University, Shanghai 200433, P.R. China.}

\date{ }

\maketitle
\begin{abstract}
\baselineskip=16pt

The soliton resolution for the Harry Dym equation is established for initial conditions in weighted Sobolev space $H^{1,1}(\mathbb{R})$. Combining the nonlinear steepest descent method and $\bar{\partial}$-derivatives condition, we obtain that when $\frac{y}{t}<-\epsilon(\epsilon>0)$ the long time asymptotic expansion of the solution $q(x,t)$ in any fixed cone 
\begin{equation}
   C\left(y_{1}, y_{2}, v_{1}, v_{2}\right)=\left\{(y, t) \in R^{2} \mid y=y_{0}+v t, y_{0} \in\left[y_{1}, y_{2}\right], v \in\left[v_{1}, v_{2}\right]\right\}
\end{equation}
up to an residual error of order $\mathcal{O}(t^{-1})$. The expansion shows the long time asymptotic behavior can be described as an $N(I)$-soliton on discrete spectrum whose parameters are modulated by a sum of localized soliton-soliton interactions as one moves through the cone and the second term coming from soliton-radiation interactionson on continuous spectrum.
\\
\\Keywords: The Harry Dym equation; Riemann-Hilbert problem;  $\bar{\partial}$ steepest descent method; Soliton resolution.
\end{abstract}
\baselineskip=16pt

\newpage
\tableofcontents
\section{Introduction}
\hspace*{\parindent}
The Harry Dym equation was first discovered by H.Dym in 1973-1974, and its first appearance in the literature occurred in a 1975 paper of Kruskal \cite{MD1975} where it was named after its discoverer, then it was rediscovered in more general form in \cite{sabatier1979} within the classical string problem. In 1978, the Harry Dym equation, the KdV equation, the mKdV equation and the nonlinear Schr\"{o}dinger equation were used as examples to verify a general model of integrable Hamiltonian equation \cite{magri1978}, which gives its bi-Hamiltonian structure. The equation is proved to satisfy the properties: infinite number of conservation laws and infinitely many symmetries \cite{PRD1983}. The recursion operators and complete Lie-B\"{a}cklund symmetry of this equation were given by \cite{Chowdhury1984}.

Direct links were found between the KdV equation and Harry Dym equation \cite{RC1986}, or mKdV equation and Harry Dym equation \cite{Ks1985,Dmitrieva1994}. The Lax pair of the Harry Dym equation is associated with the Sturm-Liouville operator. The Liouville transformation transforms this operator isospectrally into the Schr\"{o}dinger operator \cite{GESZTESY1992}. The Dym equation is an important nonlinear partial differential equation which is integrable and finds applications in several physical systems, which is related to such physical problems as the Hele-Shaw problem \cite{howison_1992}, the Saffmae-Taylor problem and the chiral dynamics of closed curves on the complex plane \cite{PhysRevLett1990}.

Harry Dym equation, as a completely integrable non-linear evolution equation, admits a cusp solitary wave solution in implicit expression by the inverse scattering transform (IST) in \cite{wm1980}. It has various forms of solutions: the algebraic geometric solution \cite{novikov_1999}, the exact solution \cite{Mokhtari_2011}, the elliptic solution \cite{Chowdhury1984} and N-loop solitons are constructed by means of N-cusp soliton solutions of Harry Dym equation \cite{MD1975}.

In 1974, Manakov first studied the long time behavior of the nonlinear wave equation solvable by the inverse scattering method \cite{Manakov1974}. Then Zakharov and Manakov use this method and give the first result of the large time asymptoticity of the NLS equation \cite{Zakharov1976}. The inverse scattering method also apply to KdV, Landau-Lifshitz and the reduced Maxwell-Bloch system \cite{Schuur1986,fokas1992,bikbaev1988}. In 1993, Deift and Zhou proposed the nonlinear steepest descent method which can obtain the long-term asymptotic behavior of the solution for the MKdV equation by deforming contours to reduce the original Riemann-Hilbert problem (RHP) to a model one whose solution is calculated in terms of parabolic cylinder functions \cite{deift1993}. Since then, this method has been widely used for focusing NLS equation \cite{deift1994}, KdV equation \cite{grunert2009}, Fokas-Lenells equation \cite{xu2015}, derivative NLS equation \cite{xu2013}, short pulse equation \cite{xu2018}, Camassa-Holm equations \cite{de2009} and the Harry Dym equation \cite{xiao2019}.

In recent years, McLaughlin and Miller further presented a $\bar{\partial}$ steepest descent method which combine steepest descent with $\bar{\partial}$-problem rather than the asymptotic analysis of
singular integrals on contours to analyze asymptotic of orthogonal polynomials with non-analytical weights \cite{mclaughlin2008}. When it is applied to integrable systems, the $\bar{\partial}$ steepest descent
method also has displayed some advantages, such as avoiding delicate estimates involving $L^p$ estimates of Cauchy projection operators, and leading the non-analyticity in the RHP reductions to a $\bar{\partial}$-problem in some sectors of the complex plane which can be solved by being recast into an integral equation and by using Neumann series. Dieng and McLaughin use it to study the defocusing NLS equation under essentially minimal regularity assumptions on finite mass initial data \cite{dieng2008}. This $\bar{\partial}$ steepest descent method also was successfully applied to prove asymptotic stability of N-soliton solutions to focusing NLS equation \cite{Dbar2018}. Jenkins studied soliton resolution for the derivative nonlinear NLS equation for generic initial data in a weighted Sobolev space \cite{jenkins2018}. Their work provided the soliton resolution property for derivative NLS equation, which decomposes the solution into the sum of a finite number of separated solitons and a radiative parts when $t\rightarrow\infty$. And the dispersive part contains two components, one coming from the continuous spectrum and another from the interaction of the discrete and continuous spectrum.

In this paper, the main purpose is to study the long time asymptotic behavior for the initial value problem of the Harry Dym equation, we apply $\bar{\partial}$ steepest descent method \cite{Dbar2018} to study the Cauchy problem for the equation
\begin{equation}
q_{t}-2\left(\frac{1}{\sqrt{1+q}}\right)_{x x x}=0 \label{HDe1}
\end{equation}
with the initial value
\begin{equation}
q(x, 0)=q_{0}(x)\in H^{1,1}(\mathbb{R}),\label{chuzhi}
\end{equation}
where
\begin{equation}
H^{1,1}(\mathbb{R})=\left\{f \in L^{2}(\mathbb{R}): f^{\prime}, x f \in L^{2}(\mathbb{R})\right\}
\end{equation}
and $q(x, t) \geq-1$ for $x \in R, t \geq 0.$ 

In general, the matrix Riemann-Hilbert problem is defined in the $\lambda$ plane and has explicit $(x, t)$ dependence, while for the Harry-Dym equation (\ref{HDe1}), we need to construct a new matrix Riemann-Hilbert problem with explicit $(y, t)$ dependence, where $y(x, t)$ is a function unknown from the initial value condition. For this purpose, let $u=\sqrt{1+q}$, the problem of Harry Dym equation (\ref{HDe1}) transforms into
\begin{equation}
\left(u^{2}\right)_{t}-2\left(\frac{1}{u}\right)_{x x x}=0,\label{HDe2}
\end{equation}
$$u(x, 0)=u_{0}(x)=\sqrt{1+q_{0}(x)}.$$
In addition, assume that the initial value $q_0(x)$ satisfy two conditions:
\begin{equation}
\int_{-\infty}^{+\infty} q_{0}(x) d x=0, \quad \int_{-\infty}^{+\infty} \int_{\pm \infty}^{x} q_{0}\left(x^{\prime}\right) d x^{\prime} d x=0, \quad x \in R.\label{assum1}
\end{equation}
The assumptions in (\ref{assum1}) imply the following conditions which are needs for the spectral analysis:
\begin{equation}
\int_{-\infty}^{+\infty} q(x, t) d x=0, \quad \int_{-\infty}^{+\infty} \int_{\pm \infty}^{x} q\left(x^{\prime}, t\right) d x^{\prime} d x=0, x \in R, t \geq 0.
\end{equation}

\textbf{Organization of the paper:} In section 2, by deformation of Lax pairs, we analyze eigenfunctions at spectral parameter $\lambda\rightarrow\infty$ and $\lambda\rightarrow0$ with the initial data. In section 3, the solution of (\ref{HDe1}-\ref{chuzhi}) can be expressed by associated matrix RHP for $M(\lambda)$ dependent on the new parameter $(y, t)$ when $\lambda\rightarrow0$. In section 4, by introducing a function $T(\lambda)$, we get a new RHP for $M^{(1)}(\lambda)$, which admits a regular discrete spectrum distribution and two triangular decompositions of the jump matrix near the phase point $\pm\lambda_0$. In section 5, we construct a mixed $\bar{\partial}$-RH problem for $M^{(2)}(\lambda)$ by introducing a transformation caused by the function $\mathcal{R}(\lambda)$, which make continuous extension off the real axis to the jump matrix. In section 6, we decompose $M^{(2)}(\lambda)$ into two parts: $M^{(2)}_{RHP}(\lambda)$ and a pure $\bar{\partial}$ problem for $M^{(3)}(\lambda)$. And $M^{(2)}_{RHP}(\lambda)$ can be divided into outer model $M^{(2)}_{out}$ be solved in section 7 and the solvable RH model $M^{(\pm\lambda_0)}$  which can be approximated by a solvable model in \cite{xiao2019} in section 8. In section 9, we study the small norm problem of $E(\lambda)$. In section 10, we analyze the pure $\bar{\partial}$ problem for $M^{(3)}$. Finally, based on the series of transformations we have done, we obtain (\ref{solu}), which contributes to the soliton resolution and long-time asymptotic behavior for the Harry Dym equation.
\section{Spectral analysis}
\hspace*{\parindent}
Let $$
\sigma_{1}=\begin{pmatrix}
0 & 1 \\
1 & 0
\end{pmatrix}, \quad \sigma_{2}=\begin{pmatrix}
0 & -i \\
i & 0
\end{pmatrix}, \quad \sigma_{3}=\begin{pmatrix}
1 & 0 \\
0 & -1
\end{pmatrix}.$$
The Harry Dym equation (\ref{HDe2}) admits the following Lax pair
\begin{equation}
\left\{\begin{aligned}
&\psi_{x x}=-\lambda^{2} u^{2} \psi, \\
&\psi_{t}=2 \lambda^{2}\left[\frac{2}{u} \psi_{x}-\left(\frac{1}{u}\right)_{x} \psi\right].
\end{aligned}\right. \label{Lax pair 1}
\end{equation}
Write $\varphi=\left(\begin{array}{c}\psi \\ \psi_{x}\end{array}\right)$, then (\ref{Lax pair 1}) can be written as 
\begin{equation}
\left\{\begin{aligned}
&\varphi_{x}=M \varphi, \\
&\varphi_{t}=N \varphi.
\end{aligned}\right. \label{Lax pair 2}
\end{equation}
where 
$$
M=\begin{pmatrix}
0 & 1 \\[6pt]
-\lambda^{2} u^{2} & 0
\end{pmatrix},\
N=\begin{pmatrix}
-2 \lambda^{2}\left(\frac{1}{u}\right)_{x} & 4 \lambda^{2} \frac{1}{u} \\[6pt]
-4 \lambda^{4} u-2 \lambda^{2}\left(\frac{1}{u}\right)_{x x} & 2 \lambda^{2}\left(\frac{1}{u}\right)_{x}
\end{pmatrix}.
$$
we analyze the eigenfunctions at spectral parameter $\lambda \rightarrow 0$ and $\lambda \rightarrow \infty$ with the initial data.
\noindent
\textbf{(1) The case of $\lambda=0$}
\hspace*{\parindent}

The Harry Dym equation (\ref{HDe2}) has also the compatibility condition of the following Lax pair
\begin{equation}
\left\{\begin{aligned}
&\Phi_{x}^{0}+i \lambda \sigma_{3} \Phi^{0}=U_{0} \Phi^{0}, \\
&\Phi_{t}^{0}+4i \lambda^{3} \sigma_{3} \Phi^{0}=V_{0} \Phi^{0},
\end{aligned}\right. \label{Lax pair 3}
\end{equation}
where 
$$
\begin{aligned}
&\Phi^{0}=P\varphi,\quad P^{-1}=\begin{pmatrix}
1 & 1 \\
-i \lambda & i \lambda
\end{pmatrix},\\
&U_{0}(x, t)=\frac{\lambda q}{2}(\sigma_{2}-i \sigma_{3}), \\
&V_{0}(x, t, \lambda)=2 \lambda^{3}[\frac{q}{u} \sigma_{2}-i(\frac{1}{u}+u-2) \sigma_{3}]-2 \lambda^{2}(\frac{1}{u})_{x} \sigma_{1}+(\frac{1}{u})_{x x}(\sigma_{2}-i \sigma_{3}).
\end{aligned}
$$

Introducing the following transformation
$$\Psi^{0}=\Phi^{0}\exp[i(\lambda x+4 \lambda^{3} t) \sigma_{3}],$$
then we get the Lax pair of $\Psi^{0}$
\begin{equation}
\left\{\begin{aligned}
&\Psi_{x}^{0}+i \lambda[\sigma_{3}, \Psi^{0}]=U_{0} \Psi^{0}, \\
&\Psi_{t}^{0}+4i \lambda^{3}[\sigma_{3}, \Psi^{0}]=V_{0} \Psi^{0}.
\end{aligned}\right.\label{Lax pair 4}
\end{equation}
As $|x| \rightarrow \infty, U_{0} \rightarrow 0$. The two eigenfunctions $\Psi_{\pm}^{0}$ of equation (\ref{Lax pair 4}) satisfy the Volterra integral equations
$$
\Psi_{+}^{0}(x, t, \lambda)=I-\int_{x}^{\infty} e^{-i \lambda\left(x-x^{\prime}\right) \hat{\sigma}_{3}} U_{0}\left(x^{\prime}, t\right) \Psi_{+}^{0}\left(x^{\prime}, t, \lambda\right) d x^{\prime},
$$
$$
\Psi_{-}^{0}(x, t, \lambda)=I+\int_{-\infty}^{x} e^{-i \lambda\left(x-x^{\prime}\right) \hat{\sigma}_{3}} U_{0}\left(x^{\prime}, t\right) \Psi_{-}^{0}\left(x^{\prime}, t, \lambda\right) d x^{\prime}.
$$
Denote these vectors with superscripts $(i)$ to indicate the $i-th$ column of vectors, then
$$\Psi_{\pm}^{0} = (\Psi_{\pm}^{0,(1)}, \Psi_{\pm}^{0,(2)}). $$
From the above definition, we find that 
$$\Psi_{+}^{0,(2)}(x, t, \lambda), \Psi_{-}^{0,(1)}(x, t, \lambda) \text{ are bounded and analytic for } \lambda \in \mathbb{C}_{+},$$
$$\Psi_{+}^{0,(1)}(x, t, \lambda), \Psi_{-}^{0,(2)}(x, t, \lambda) \text{ are bounded and analytic for } \lambda \in \mathbb{C}_{-},$$
where 
$\mathbb{C}_{+}=\{\lambda \in \mathbb{C} \mid \operatorname{Im} \lambda>0\}, \mathbb{C}_{-}=\{\lambda \in \mathbb{C} \mid \operatorname{Im} \lambda<0\}.$

According to the assumptions in (\ref{assum1}) and (\ref{Lax pair 4}), $\Psi_{\pm}^{0}(x, t, \lambda)$ have the expansions in powers of $\lambda$, for $\lambda=0$,
\begin{equation}
\Psi_{\pm}^{0}(x, t, \lambda)= I+\frac{\lambda}{2} A(x, t)\left(\sigma_{2}-i \sigma_{3}\right)-\lambda^{2} A_{1}(x, t) \sigma_{1}+O\left(\lambda^{3}\right),\label{expan1}
\end{equation}
where
$$
A(x, t)=\int_{\pm \infty}^{x} q\left(x^{\prime}, t\right) d x^{\prime},\quad A_{1}(x, t)=\int_{\pm \infty}^{x} \int_{\pm \infty}^{x^{\prime}} q\left(x^{\prime \prime}, t\right) d x^{\prime \prime} d x^{\prime}.
$$
\noindent
\textbf{(2) The case of $\lambda=\infty$}
\hspace*{\parindent}

Introducing the gauge transformation
\begin{equation}
\Phi=\mathrm{PD} \varphi,
\end{equation}
where
$$
P^{-1}=\begin{pmatrix}
1 & 1 \\
-i \lambda & i \lambda
\end{pmatrix}, \quad D=\begin{pmatrix}
\sqrt{u} & 0 \\
0 & \frac{1}{\sqrt{u}}
\end{pmatrix}.
$$
Then we have the Lax pair of $\varphi$ (\ref{Lax pair 2}) becomes
\begin{equation}
\left\{\begin{aligned}
&\Phi_x+i \lambda u \sigma_{3} \Phi=U \Phi, \\
&\Phi_t+i\left[\lambda\left[\frac{1}{u}\left(\frac{1}{u}\right)_{x x}-\frac{1}{2}\left(\left(\frac{1}{u}\right)_{x}\right)^{2}\right]+4 \lambda^{3}\right] \sigma_{3} \Phi=V \Phi.
\end{aligned}\right. \label{Lax pair 5}
\end{equation}
where
$$
\begin{aligned}
&U(x, t)=\frac{1}{4} \frac{q_{x}}{u^{2}} \sigma_{1}, \\
&V(x, t, \lambda)=-2 \lambda^{2}\left(\frac{1}{u}\right)_{x} \sigma_{1}+\lambda\left[\frac{1}{u}\left(\frac{1}{u}\right)_{x x} \sigma_{2}-\frac{i}{2}\left(\left(\frac{1}{u}\right)_{x}\right)^{2} \sigma_{3}\right]+\frac{1}{2u^{2}}\left(\frac{1}{u}\right)_{x x x} \sigma_{1}. 
\end{aligned}
$$
It is clear that as $|x| \rightarrow \infty, U(x, t) \rightarrow 0, V(x, t, \lambda) \rightarrow 0$.

In order to dedicate solutions via integral Volterra equation more conveniently, we make a transformation
\begin{equation}
\Psi=\Phi \exp \left[i \lambda p(x, t, \lambda) \sigma_{3}\right],
\end{equation}
where
\begin{equation}
p(x, t, \lambda)=x+\int_{x}^{\infty}(1-u(\xi, t)) d \xi+4 \lambda^{2} t.\label{p(x,t)}
\end{equation}
From (\ref{p(x,t)}), we have 
$$
p_{x}=u(x, t),\quad p_{t}=-\int_{x}^{\infty} u_{t}(\xi, t) d \xi+4 \lambda^{2},
$$
and the conservation law
$$
u_{t}-\left(-\frac{1}{2}\left(\left(\frac{1}{u}\right)_{x}\right)^{2}+\frac{1}{u}\left(\frac{1}{u}\right)_{x x}\right)_{x}=0
$$
implies that
$$
p_{t}=-\frac{1}{2}\left(\left(\frac{1}{u}\right)_{x}\right)^{2}+\frac{1}{u}\left(\frac{1}{u}\right)_{x x}+4 \lambda^{2}.
$$
Then the Lax pair (\ref{Lax pair 5}) is changed into a new Lax pair
\begin{equation}
\left\{\begin{aligned}
&\Psi_{x}+i \lambda p_{x}\left[\sigma_{3}, \Psi\right]=U\Psi, \\
&\Psi_{t}+i \lambda p_{t}\left[\sigma_{3}, \Psi\right]=V\Psi.
\end{aligned}\right.\label{Lax pair 6}
\end{equation}

Furthermore, we define two eigenfunctions $\Psi_{\pm}$ of (\ref{Lax pair 6}) as the solutions of the following two Volterra integral equation in the $(x, t)$ plane
\begin{equation}
\Psi_{\pm}(x, t, \lambda)=I+\int_{\left(x^{*}, \tau^{*}\right)}^{(x, t)} e^{-i \lambda\left(p(x, t, \lambda)-p\left(x^{\prime}, \tau, \lambda\right)\right) \hat{\sigma}_{3}}\left(U\left(x^{\prime}, \tau\right) \Psi_{\pm}\left(x^{\prime}, \tau, \lambda\right) d x^{\prime}+V\left(x^{\prime}, \tau, \lambda\right) \Psi_{\pm}\left(x^{\prime}, \tau, \lambda\right)\right) d \tau.\label{VE}
\end{equation}
Noting that one-form $U\left(x^{\prime}, \tau\right) \Psi_{\pm}\left(x^{\prime}, \tau, \lambda\right) d x^{\prime}+V\left(x^{\prime}, \tau, \lambda\right) \Psi_{\pm}\left(x^{\prime}, \tau, \lambda\right)$ is independent of the path of integration, we choose the particular initial points of integration to be parallel to the $x-axis$ leads to the integral equations for $\Psi_{+}$ and $\Psi_{-}$:
\begin{equation}
\begin{aligned}
&\Psi_{+}(x, t, \lambda)=I-\int_{x}^{\infty} e^{i \lambda \int_{x}^{x^{\prime}} u(\xi, t) d \xi \hat{\sigma}_{3}} U\left(x^{\prime}, t\right) \Psi_{+}\left(x^{\prime}, t, \lambda\right) dx^{\prime}, \\
&\Psi_{-}(x, t, \lambda)=I+\int_{-\infty}^{x} e^{i \lambda \int_{x}^{x^{\prime}} u(\xi, t) d \xi \hat{\sigma}_{3}} U\left(x^{\prime}, t\right) \Psi_{-}\left(x^{\prime}, t, \lambda\right) d x^{\prime}.
\end{aligned}
\end{equation}
Denote $\Psi_{\pm} = (\Psi_{\pm}^{(1)}, \Psi_{\pm}^{(2)}),$ we find that 
$$\Psi_{+}^{(2)}(x, t, \lambda), \Psi_{-}^{(1)}(x, t, \lambda) \text{ are bounded and analytic for } \lambda \in \mathbb{C}_{+},$$
$$\Psi_{+}^{(1)}(x, t, \lambda), \Psi_{-}^{(2)}(x, t, \lambda) \text{ are bounded and analytic for } \lambda \in \mathbb{C}_{-}.$$
\noindent
\textbf{(3) The scattering matrix $s(\lambda)$}
\hspace*{\parindent}

Because the eigenfunctions $\Psi_{\pm}$ are both the solutions of (\ref{VE}), define the spectral function $s(\lambda)$ by
\begin{equation}
\Psi_{-}(x, t, \lambda)=\Psi_{+}(x, t, \lambda) e^{-i \lambda p(x, t, \lambda) \hat{\sigma}_{3}} s(\lambda). \label{s}
\end{equation}
From (\ref{Lax pair 6}), we can deduce that
\begin{equation}
det\left(\Psi_{\pm}(x, t, \lambda)\right)=1. \label{det}
\end{equation}
According the definition of $U(x,t)$, it is obvious that $U(x, t)=\sigma_{1} U(x, t) \sigma_{1},$ we get the $\Psi_{\pm}(x, t, \lambda)$ satisfy the symmetry properties
\begin{equation}
\left\{\begin{aligned}
&\Psi_{\pm 11}(x, t, \lambda)=\overline{\Psi_{\pm 22}(x, t, \bar{\lambda})}, \quad \Psi_{\pm 21}(x, t, \lambda)=\overline{\Psi_{\pm 12}(x, t, \bar{\lambda})}, \\
&\Psi_{\pm 11}(x, t,-\lambda)=\Psi_{\pm 22}(x, t, \lambda), \quad \Psi_{\pm 12}(x, t,-\lambda)=\Psi_{\pm 21}(x, t, \lambda).
\end{aligned}\right.\label{symm}
\end{equation}
Then the matrix $s(\lambda)$ has the form
\begin{equation}
s(\lambda)=\begin{pmatrix}
a(\lambda) & \overline{b(\bar{\lambda})} \\
b(\lambda) & \overline{a(\bar{\lambda})}
\end{pmatrix}, \label{sm}
\end{equation}
where $a(-\bar{\lambda})=\overline{a(\lambda)}$ and $b(-\bar{\lambda})=\overline{b(\lambda)}$, from (\ref{det}) we get $det(s(\lambda))=1$, combining (\ref{s}) and (\ref{det}) gives 
\begin{equation}
a(\lambda) = det(\Psi_{-}^{(1)},\Psi_{+}^{(2)}),\label{adet}  
\end{equation}then we can get $a(\lambda)$ and $b(\lambda)$ have the following properties:
\begin{itemize}
\item $a(\lambda)$ is analytic in $\mathbb{C}_{+}$ and continuous for $\lambda \in \overline{\mathbb{C}}_{+}$, $b(\lambda)$ is continuous for $\lambda \in R. $
\item $a(\lambda)=1+O\left(\frac{1}{\lambda}\right), \lambda \rightarrow \infty$; $a(\lambda)=1+O(\lambda), \lambda \rightarrow 0,$ for $\lambda \in \mathbb{C}_{+}$.
\item $b(\lambda)=O\left(\frac{1}{\lambda}\right), \lambda \rightarrow \infty$; $ b(\lambda)=O(\lambda), \lambda \rightarrow 0,$ for $\lambda \in R.$
\end{itemize}
We introduce the reflection coefficient 
$$r(\lambda)=\frac{b(\lambda)}{a(\lambda)}$$
with symmetry $r(-\bar{\lambda})=\overline{r(\lambda)}$. And for $a(\lambda) \neq 0$ for all $\lambda$ such that $\operatorname{Im} \lambda \geq 0$ \cite{xiao2019}, we assume our initial data satisfy the following assumption.
\begin{property}
The initial data $ q_{0}(x)\in H^{1,1}(\mathbb{R})$ and it generates generic scattering data which satisfy that
\begin{itemize}
\item $a(\lambda)$ only has $N$ simple zeros, denote $\Lambda = \{\lambda_n,a(\lambda_n)=0\text{ and } \lambda \in \mathbb{C}_{+}\}_{n=1}^{N};$
\item $a(\lambda)$ and $r(\lambda)$ belong $H^{1,1}(\mathbb{R})$.
\end{itemize}\label{assump}
\end{property}
\noindent
\textbf{(4) The relation between $\Psi_{\pm}$ and $\Psi_{\pm}^{0}$}
\hspace*{\parindent}

In the following analysis, we formulate a RH problem by defining the matrix function $M(x, t, z)$ with eigenfunctions $\Psi_{\pm}$, while the reconstruction formula between the solution $q(x, t)$ and the RH
problem can be found from the asymptotic of $\Psi_{\pm}$ as $\lambda \rightarrow 0$. So we need to calculate the
relation between $\Psi_{\pm}$ and $\Psi_{\pm}^{0}$.

The eigenfunctions $\Psi_{\pm}$ and $\Psi_{\pm}^{0}$ can be related to each other as 
\begin{equation}
\Psi_{\pm}=G(x, t) \Psi_{\pm}^{0} e^{-i\left(\lambda x+4 \lambda^{3} t\right) \sigma_{3}} C_{\pm}(\lambda) e^{i \lambda p(x, t, \lambda) \sigma_{3}}, \label{relation}
\end{equation}
where $ C_{\pm}(\lambda)$ is independent of $x$ and $t$,
\begin{equation}
G(x, t)=\mathrm{PDP}^{-1}=\frac{1}{2}\begin{pmatrix}
\sqrt{u}+\frac{1}{\sqrt{u}} & \sqrt{u}-\frac{1}{\sqrt{u}} \\
\sqrt{u}-\frac{1}{\sqrt{u}} & \sqrt{u}+\frac{1}{\sqrt{u}}
\end{pmatrix}.
\end{equation}
Considering (\ref{relation}) when $x \rightarrow \pm \infty$, we have $C_{+}(\lambda) = I$ and $C_{-}(\lambda) = e^{ic\lambda \sigma_3}$, where\\
$c=\int_{-\infty}^{\infty}(u(\xi, t)-1) d \xi$ is a quantity conserved under the dynamics governed by (\ref{HDe1}).

Note that (\ref{relation}) can be written as
\begin{equation}
\begin{aligned}
&\Psi_{+}(x, t, \lambda)=G(x, t) \Psi_{+}^0(x, t, \lambda) e^{-i \lambda F(x, t) \sigma_{3}},\\
&\Psi_{-}(x, t, \lambda)=G(x, t) \Psi_{-}^0(x, t, \lambda) e^{i \lambda \int_{-\infty}^{x}(\rho(\xi, t)-1) d \xi \sigma_{3}},
\end{aligned}\label{relation2}
\end{equation}
where $F(x, t)=\int_{x}^{\infty}(u(\xi, t)-1) d \xi$, then we have (\ref{relation}) and (\ref{relation2}) imply that as $\lambda \rightarrow 0$, coefficients of the expansions for $\Psi_{\pm}(x, t, \lambda)$ can be expressed by $q(x, t)$. In addition, according to (\ref{expan1}), (\ref{adet}) and (\ref{relation2}), we find 
\begin{equation}
a(\lambda)=1+i \lambda c+(i \lambda)^{2} \frac{c^{2}}{2}+O\left(\lambda^{3}\right), \quad \text { as } \lambda \rightarrow 0.\label{expan2}
\end{equation}
\section{A Riemann-Hilbert problem}
\hspace*{\parindent}
Suppose that $\Lambda = \{\lambda_n, n = 1,\cdots,N\}$ are simple zeros of $a(\lambda)$, which implies that $\overline{\lambda}_n \in \mathbb{C}_{-}$ are simple zeros of $\overline{a(\bar{\lambda})}$, denote $\bar{\Lambda} = \{\overline{\lambda}_n, n = 1,\cdots,N\}$.

Define a sectionally analytic matrix $M(x, t, \lambda)$
\begin{equation}
M(x, t, \lambda)=\left\{\begin{aligned}
\left(\frac{\Psi_{-}^{(1)}}{a(\lambda)}, \Psi_{+}^{(2)}\right),\quad \lambda \in \mathbb{C}^{+}, \\
\left(\Psi_{+}^{(1)}, \frac{\Psi_{-}^{(2)}}{\overline{a(\bar{\lambda})}}\right),\quad \lambda \in \mathbb{C}^{-}.
\end{aligned}
\right.\label{M1}
\end{equation}
we can get the symmetry of $M(x, t, \lambda)$ by (\ref{symm})
\begin{equation}
\overline{M(x, t, \bar{\lambda})}=M(x, t,-\lambda)=\sigma_{1} M(x, t, \lambda) \sigma_{1}. \label{symm2}
\end{equation}
The function $M$ defined by (\ref{M1}) satisfies the following Riemann-Hilbert problem.\\[6pt]
\textbf{RHP 1}\quad Find a matrix-valued function $M(x, t, \lambda)$ with the following properties
\begin{enumerate}
    \item Analyticity: $M(x, t, \lambda)$ is meromorphic in $\mathbb{C} \backslash \mathbb{R}.$
    \item Jump condition: The boundary values $M_{\pm}$ of $M$ as $\lambda \rightarrow R$ from $\mathbb{C}_{\pm}$ are related by
    \begin{equation}
    M_{+}(x, t, \lambda)=M_{-}(x, t, \lambda) J(x, t, \lambda), \quad \lambda \in R  
    \end{equation}
    where \begin{equation}J(x, t, \lambda)=\begin{pmatrix}
    1-|r(\lambda)|^{2} & -\overline{r(\bar{\lambda})} e^{-2 i \lambda p(x, t, \lambda)} \\[6pt] 
    r(\lambda) e^{2 i \lambda p(x, t, \lambda)} & 1
    \end{pmatrix}. \label{jump1}
     \end{equation}
    \item Normalization: As $\lambda \rightarrow \infty,\quad M(x, t, \lambda) \rightarrow I.$
    \item Residues: At $\lambda=\lambda_{n}$ and $\lambda=\overline{\lambda}_{n}$, $M(x, t, \lambda)$ has simple poles and the residues satisfy the conditions 
    \begin{equation}
\begin{aligned}
&\mathop{\operatorname{Res}}\limits_{\lambda=\lambda_{n}} M(x, t, \lambda)=\lim\limits _{\lambda \rightarrow \lambda_{n}} M(x, t, \lambda){\begin{pmatrix}
0 & 0 \\[6pt]
c_{n}e^{2i\lambda_{n}p(\lambda_n)} & 0
\end{pmatrix}}, \\[6pt]
&\mathop{\operatorname{Res}}\limits_{\lambda=\overline{\lambda}_{n}} M(x, t, \lambda)=\lim\limits_{\lambda \rightarrow \overline{\lambda}_{n}} M(x, t, \lambda)\begin{pmatrix}
0 & \overline{c}_{n} e^{-2 i \overline{\lambda}_{n} p(\overline{\lambda}_{n})} \\[6pt]
0 & 0
\end{pmatrix},
\end{aligned}
\end{equation}
where $c_n = \frac{b(\lambda_n)}{a'(\lambda_n)}.$
\end{enumerate}

Considering the asymptotic behavior for $M(x, t, \lambda)$ when $\lambda \rightarrow 0$, we can solve solution of (\ref{HDe1})-(\ref{chuzhi}) by RHP1. It follows from (\ref{expan1}), (\ref{relation2}) and (\ref{expan2}), for $\lambda \rightarrow 0$ we have
\begin{equation}
\begin{aligned}
M(x, t, \lambda)&=G(x, t)\left\{I-i \lambda\left[\frac{A(x, t)}{2}\left(i \sigma_{2}+\sigma_{3}\right)+F(x, t) \sigma_{3}\right]\right. \\
&\left.+(i \lambda)^{2}\left[A_{1}(x, t) \sigma_{1}+\frac{A(x, t) F(x, t)}{2}\left(I-\sigma_{1}\right)+\frac{F^{2}(x, t)}{2} I\right]+O\left(\lambda^{3}\right)\right\}.
\end{aligned}\label{relation3}
\end{equation}
We introduce a row vector function
\begin{equation}
\left(\mu_{1}, \mu_{2}\right)(x, t, \lambda)=(1, 1) M(x, t, \lambda).
\end{equation}
From (\ref{relation3}), then it follows that as $\lambda \rightarrow 0$,
\begin{equation}
\begin{aligned}
&\mu_{1}(x, t, \lambda)=\sqrt{\rho}\left\{1-i \lambda F(x, t)+(i \lambda)^{2}\left[A_{1}(x, t)+\frac{F^{2}(x, t)}{2}\right]+O\left(\lambda^{3}\right)\right\}, \\
&\mu_{2}(x, t, \lambda)=\sqrt{\rho}\left\{1+i \lambda F(x, t)+(i \lambda)^{2}\left[A_{1}(x, t)+\frac{F^{2}(x, t)}{2}\right]+O\left(\lambda^{3}\right)\right\},
\end{aligned}
\end{equation}
which yield
\begin{equation}
\begin{aligned}
&\frac{\mu_{1}}{\mu_{2}}(x, t, \lambda)=1-2 i \lambda F(x, t)+O\left(\lambda^{2}\right), \\
&q(x, t)=\left(\mu_{1} \mu_{2}\right)^{2}(x, t, 0)-1.
\end{aligned}\label{relation4}
\end{equation}

Equations (\ref{relation4}) show that we can reconstruct the solution of the initial value problem of (\ref{HDe1})-(\ref{chuzhi}) by the matrix-valued function $M(x, t, \lambda)$. However, due to the jump matrix $J$ are determined by $q_{0}(x)$, $e^{-2 i \lambda p(x, t, \lambda)}$, while $p(x, t, \lambda)$ cannot be given by initial data $q_{0}(x)$. Thus, we introduce the new scale
\begin{equation}
y(x, t)=x-\int_{x}^{\infty}(u(\xi, t)-1) d \xi=x-F(x, t),\label{xy}
\end{equation}
which makes the jump matrix $J$ explicitly dependent on the parameters $(y, t)$:
\begin{equation}
J(x, t, \lambda) = \hat{J}(y, t, \lambda)=\begin{pmatrix}
1-|r(\lambda)|^{2} & -\overline{r(\bar{\lambda})} e^{-2 i\left(\lambda y+4 \lambda^{3} t\right)} \\[6pt]
r(\lambda) e^{2 i\left(\lambda y+4 \lambda^{3} t\right)} & 1
\end{pmatrix}, \quad \lambda \in R.\label{jump2}
\end{equation}
By the definition of the new sacle $y(x,t)$, then we can transform this RH problem into a new RH problem parametrized by $(y, t)$. Define
\begin{equation}
\begin{array}{l}
\hat{M}(y, t, \lambda)=M(x(y, t), t, \lambda),\\[6pt]
(\hat{\mu}_{1}, \hat{\mu}_{2})(y, t, \lambda)= (1, 1)\hat{M}(y, t, \lambda).
\end{array}\label{my}
\end{equation}
It follows from (\ref{symm2}) that $\hat{M}(y, t, \lambda)$ has the symmetry property
\begin{equation}
\overline{\hat{M}(y, t, \bar{\lambda})}=\hat{M}(y, t,-\lambda)=\sigma_{1} \hat{M}(y, t, \lambda) \sigma_{1}. \label{symm2}
\end{equation}
We construct the row function $\hat{M}(y, t, \lambda)$ as a solution of the following $2\times2$ matrix Riemann-Hilbert problem.\\[6pt]
\textbf{RHP 2}\quad Find a matrix function $\hat{M}(y, t, \lambda)$ with the following properties
\begin{enumerate}
    \item Analyticity: $\hat{M}(y, t, \lambda)$ is meromorphic in $\mathbb{C} \backslash \mathbb{R}.$
    \item Jump condition: 
    \begin{equation}
    \hat{M}_{+}(y, t, \lambda)=\hat{M}_{-}(y, t, \lambda) \hat{J}(y, t, \lambda), \quad \lambda \in R,
    \end{equation}
    where $\hat{J}(y, t, \lambda)$ is defined by (\ref{jump2}).
    \item Normalization: $\hat{M}(y, t, \lambda) \rightarrow I$, as $\lambda \rightarrow \infty $.
    \item Residues: At $\lambda=\lambda_{n}$ and $\lambda=\overline{\lambda}_{n}$, $\hat{M}(y, t, \lambda)$ has simple poles and the residues satisfy the conditions 
    \begin{equation}
\begin{aligned}
&\mathop{\operatorname{Res}}\limits_{\lambda=\lambda_{n}} \hat{M}(y, t, \lambda)=\lim\limits _{\lambda \rightarrow \lambda_{n}} \hat{M}(y, t, \lambda){\begin{pmatrix}
0 & 0 \\[6pt]
c_{n}e^{2i(\lambda_n y +4\lambda_n^3 t)} & 0
\end{pmatrix}}, \\
&\mathop{\operatorname{Res}}\limits_{\lambda=\overline{\lambda}_{n}} \hat{M}(y, t, \lambda)=\lim\limits_{\lambda \rightarrow \overline{\lambda}_{n}} \hat{M}(y, t, \lambda)\begin{pmatrix}
0 & \overline{c}_{n} e^{-2 i (\bar{\lambda}_n y +4\bar{\lambda}_n^3 t)} \\[6pt]
0 & 0
\end{pmatrix}.
\end{aligned}\label{res1}
\end{equation}
\end{enumerate}
Then, we have
\begin{itemize}
\item The solution $\hat{M}(y, t, \lambda)$ exists and is unique.
\item The solution of the initial value problem of (\ref{HDe1})-(\ref{chuzhi}) can be expressed in parametric form of $(y,t)$ in terms of the solution $\hat{M}(y, t, \lambda)$ of above RHP 2 as follows,
\begin{equation}
q(x,t) = \hat{q}(y, t),
\end{equation}
where 
\begin{equation}
\begin{aligned}
&\hat{q}(y, t)=\left(\hat{\mu}_{1} \hat{\mu}_{2}\right)^{2}(y, t, 0)-1,\\
&x(y, t)=y-\lim\limits_{\lambda \rightarrow 0} \frac{1}{2 i \lambda}\left(\frac{\hat{\mu}_{1}}{\hat{\mu}_{2}}(y, t, \lambda)-1\right),
\end{aligned}\label{relation5}
\end{equation}
and the $\hat{\mu}_1, \hat{\mu}_2$ defined by (\ref{my}).
\end{itemize}
\section{Conjugation}
\hspace*{\parindent}
In this section, we introduce a new transform $\hat{M}(\lambda) \rightarrow M^{(1)}(\lambda)$, from which we make that the $M^{(1)}$ is well behaved as $|t| \rightarrow \infty$ along any characteristic line. Note that the jump matrix $J^{(1)}(y, t, \lambda)$ (see (\ref{jump2})) for the RHP of the the Harry Dym equation involves the exponentials, which is the main factor affecting the jump matrix and the residue conditions. We denote the oscillatory term as
\begin{equation}
e^{-2i(\lambda y +4\lambda^3 t)} = e^{2i\theta(\lambda)},\quad \theta(y, t, \lambda)=\lambda \frac{y}{t}+4 \lambda^{3}.\label{exp}
\end{equation}

Our aim is to study the long time asymptotic behavior of solution $q(x,t)$, the case $\frac{y}{t}>\varepsilon(\varepsilon>0)$ has be discussed by \cite{xiao2019}. We considering the case of $\frac{y}{t}<-\varepsilon(\varepsilon>0)$, which get the stationary phase $\pm\lambda_0$, where $\lambda_{0}=\sqrt{-\frac{y}{12t}} \in \mathbb{R}.$ Then we calculate the new form of $\theta(\lambda)$
\begin{equation}
    \theta(\lambda) = 4\lambda^3-12\lambda\lambda_0^2,
\end{equation}
from which we have
\begin{equation}
    \operatorname{Re}(2i\theta(\lambda)) = 8\operatorname{Im}\lambda((\operatorname{Im}\lambda)^2+3(\lambda_0^2-(\operatorname{Re}\lambda)^2)).\label{sig}
\end{equation}
Then we can get the regions of growth and decay of exponential factor which follows signature table for the function $\operatorname{Re}(2i\theta)$, see Figure \ref{figure1}.
\begin{figure}[htbp]
\centering
\begin{minipage}[t]{0.45\linewidth}
\centering
\begin{tikzpicture}
\node[anchor=south west,inner sep=0] (image) at (0,0)
 {\includegraphics[width=\textwidth]{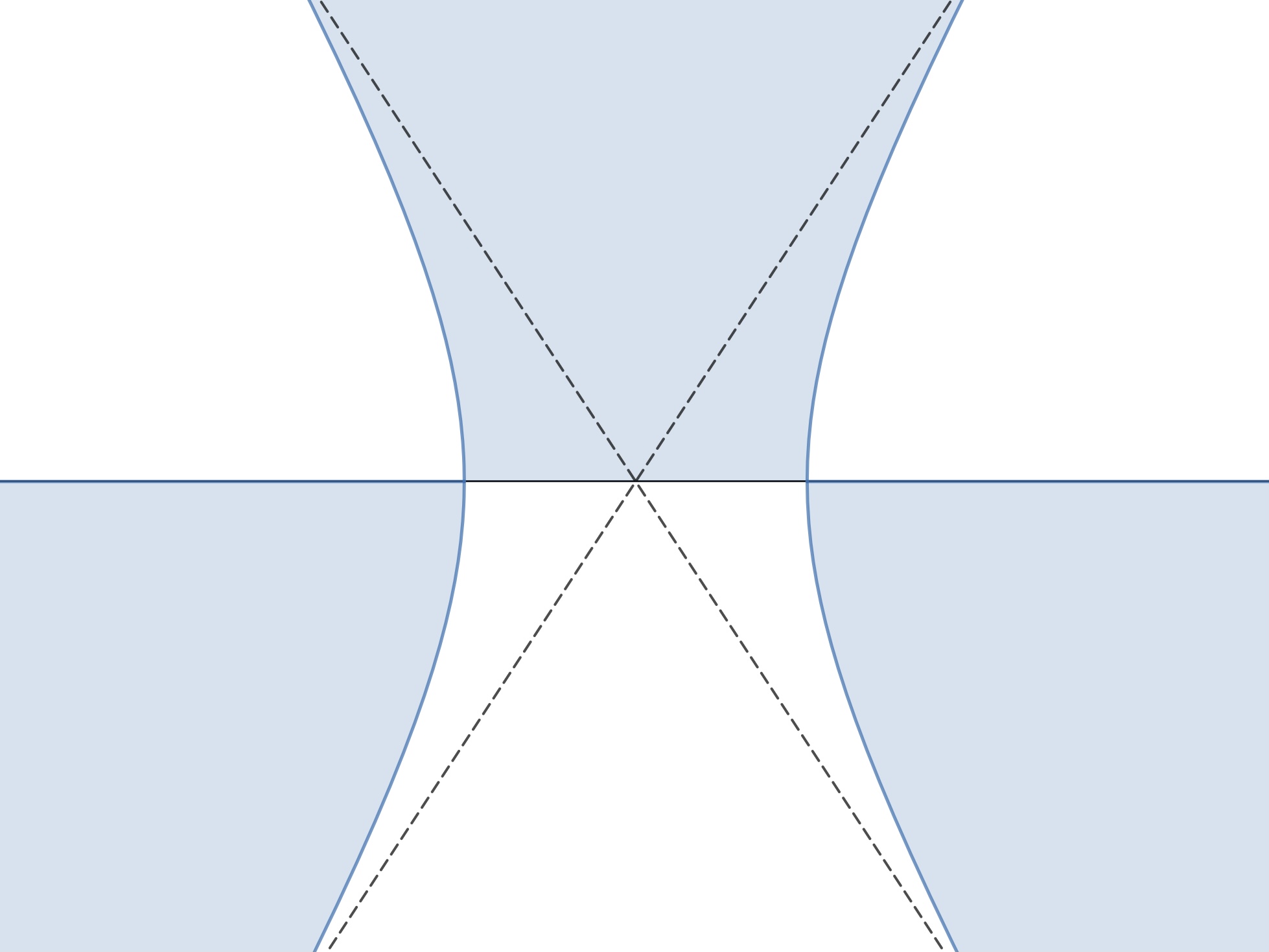}};
    \begin{scope}[x={(image.south east)},y={(image.north west)}]
        \node at (0.5,0.8) {$|e^{2 i t \theta}| \rightarrow \infty$};
        \node at (0.5,0.2) {$|e^{2 i t \theta}| \rightarrow 0$};
        \node at (0.15,0.35) {$|e^{2 i t \theta}| \rightarrow \infty$};
        \node at (0.15,0.65) {$|e^{2 i t \theta}| \rightarrow 0$};
        \node at (0.85,0.35) {$|e^{2 i t \theta}| \rightarrow \infty$};
        \node at (0.85,0.65) {$|e^{2 i t \theta}| \rightarrow 0$};
        \node at (0.5, 0.43) {$O$};
        \node at (0.31, 0.45) {$-\lambda_0$};
        \node at (0.69,0.45) {$\lambda_0$};
        \node at (0.5, 1.05) {$t\rightarrow +\infty$};
    \end{scope}
\end{tikzpicture}
\end{minipage}
\qquad
\begin{minipage}[t]{0.45\linewidth}
\centering
\begin{tikzpicture}
\node[anchor=south west,inner sep=0] (image) at (0,0)
 {\includegraphics[width=\textwidth]{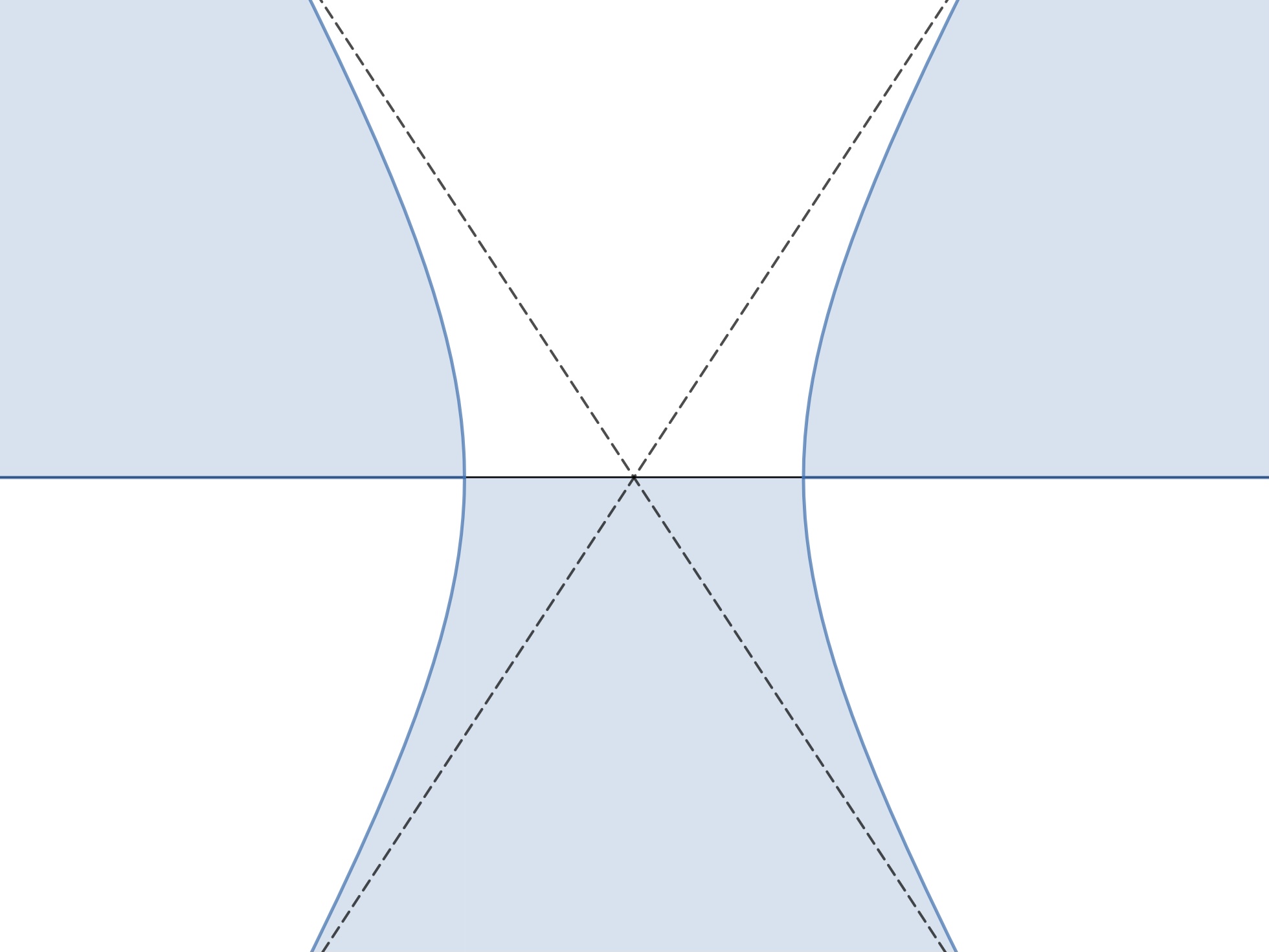}};
    \begin{scope}[x={(image.south east)},y={(image.north west)}]
        \node at (0.5,0.8) {$|e^{2 i t \theta}| \rightarrow 0$};
        \node at (0.5,0.2) {$|e^{2 i t \theta}| \rightarrow \infty$};
        \node at (0.15,0.35) {$|e^{2 i t \theta}| \rightarrow 0$};
        \node at (0.15,0.65) {$|e^{2 i t \theta}| \rightarrow \infty$};
        \node at (0.85,0.35) {$|e^{2 i t \theta}| \rightarrow 0$};
        \node at (0.85,0.65) {$|e^{2 i t \theta}| \rightarrow \infty$};
        \node at (0.5, 0.43) {$O$};
        \node at (0.31, 0.45) {$-\lambda_0$};
        \node at (0.69,0.45) {$\lambda_0$};
        \node at (0.5, 1.05) {$t\rightarrow -\infty$};
    \end{scope}
\end{tikzpicture}
\end{minipage}
\caption{The regions of growth and decay of $|e^{2 i t \theta}|$ for large $|t|$.}
\label{figure1}
\end{figure}

In our analysis, we mainly discuss the case of $t\rightarrow +\infty$, and the case $t\rightarrow -\infty$ can be analyzed in a similarly way. The first step is a conjugation to well-condition the problem for the large-time analysis, we introduce the partition as follow:
$$\Delta_{\lambda_{0}}^{+}=\{n \in \{1,\cdots, N\}||\lambda_n|>\lambda_0\},\quad \Delta_{\lambda_{0}}^{-}=\{n \in \{1,\cdots, N\}||\lambda_n|<\lambda_0\},$$
the intervals on the real-axis are divided as
$$I_{+}=(-\infty,-\lambda_{0}) \cup(\lambda_{0},+\infty), \quad I_{-}=\left[-\lambda_{0}, \lambda_{0}\right].$$

In order to arrive at a problem which is well normalized, we define the following function
\begin{equation}
T(\lambda)=T(\lambda, \lambda_{0})=\prod_{n \in \Delta_{\lambda_{0}}^{-}} \frac{\lambda-\bar{\lambda}_{n}}{\lambda-\lambda_{n}} \delta(\lambda),\label{T}
\end{equation}
where $\delta(\lambda)$ has been solved in \cite{xiao2019}
\begin{equation}
\delta(\lambda)=\left(\frac{\lambda-\lambda_{0}}{\lambda+\lambda_{0}}\right)^{i \nu} \exp{\left(i\int_{-\lambda_{0}}^{\lambda_{0}} \frac{\eta(s)}{s-\lambda}ds\right)},
\end{equation}
\begin{equation}
\nu=-\frac{1}{2 \pi} \log \left(1-\left|r\left(\lambda_{0}\right)\right|^{2}\right)>0, \quad \eta(s)=-\frac{1}{2 \pi}\log \left(\frac{1-|r(s)|^{2}}{1-\left|r\left(\lambda_{0}\right)\right|^{2}}\right).
\end{equation}
\begin{proposition}
The function $T(\lambda)$ defined by (\ref{T}) satisfies following properties:
\begin{enumerate}
    \item $T$ is meromorphic in $\mathbb{C} \backslash I_{-},$ for each $n\in \Delta_{z_{0}}^{-}$, $T(\lambda)$ has a simple pole at $\lambda_n$ and a simple zero at $\bar{\lambda}_n$.  
    \item For $\lambda \in \mathbb{C} \backslash I_{-},$ $T(\lambda)\overline{T(\bar{\lambda})}=1.$
    \item The boundary values $T_{\pm}$ satisfy
    \begin{equation}
    \begin{array}{l}
        T_{+}(\lambda)=T_{-}(\lambda),\quad \lambda \in I_{+},\\[6pt]
         T_{+}(\lambda)=T_{-}(\lambda)\left(1-|r(z)|^{2}\right), \quad \lambda \in I_{-}.
    \end{array}
    \end{equation}
    \item $T(\lambda)$ is continuous at $\lambda=0$ and it admits
    \begin{equation}
T(\lambda)=T(0)\left(1+\lambda T_{1}\right)+\mathcal{O}\left(\lambda^{2}\right),
\end{equation}
where 
\begin{equation}
T_{1}=2 \sum_{n \in \Delta_{\lambda_0}^-} \frac{\operatorname{Im}\left(\lambda_{n}\right)}{\lambda_{n}}-\int_{-\lambda_{0}}^{\lambda_{0}} \frac{\eta(s)}{s^{2}} d s.
\end{equation}
    \item As $|\lambda|\rightarrow \infty$ with $|arg(\lambda)|\leq c<\pi$,
    \begin{equation}
T(\lambda)=1+\frac{i}{\lambda}\left[2 \sum_{n \in \Delta_{\lambda_0}^{-}} \operatorname{Im}\left(\lambda_{n}\right)-\int_{-\lambda_{0}}^{\lambda_{0}}\eta(s)ds-2\nu\lambda_0\right]+\mathcal{O}\left(\lambda^{-2}\right).\label{expan3}
\end{equation}
\item  As $\lambda \rightarrow \pm\lambda_{0}$ along any ray $\lambda = \pm\lambda_{0}+e^{i \omega} \mathbb{R}{+}$ with $|\omega| \leq c<\pi$,
\begin{equation}
\left|T\left(\lambda, \lambda_{0}\right)-T_{0}\left(\pm \lambda_{0}\right)\left(\lambda \mp \lambda_{0}\right)^{i \eta\left(\pm \lambda_{0}\right)}\right| \leq C\left|\lambda \mp \lambda_{0}\right|^{1 / 2},
\end{equation}
where
\begin{equation}
T_{0}\left(\pm \lambda_{0}\right)=T\left(\pm \lambda_{0}, \lambda_{0}\right)=\prod_{n \in \Delta_{\lambda_{0}}^{-}} \frac{\pm \lambda_{0}-\bar{\lambda}_{n}}{\pm \lambda_{0}-\lambda_{n}} e^{i \beta^{\pm}\left(\lambda_{0}, \pm \lambda_{0}\right)},
\end{equation}
\begin{equation}
\beta^{\pm}\left(\lambda, \lambda_{0}\right)=-\eta\left(\pm \lambda_{0}\right) \log \left(\left(\lambda \mp \lambda_{0}+1\right)\right)+\int_{I_{-}} \frac{\eta(s)-\chi_{\pm}(s)\eta\left(\pm \lambda_{0}\right)}{s-\lambda}ds.
\end{equation}
Here $\chi_{\pm}(s)$ are the characteristic functions of the interval $s \in\left(\lambda_{0}-1, \lambda_{0}\right)$ and
$s \in\left(-\lambda_{0},-\lambda_{0}+1\right)$ respectively.
\end{enumerate}
\end{proposition}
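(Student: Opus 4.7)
My plan is to verify each of the six items by analyzing the two factors
\begin{equation*}
B(\lambda) := \prod_{n \in \Delta_{\lambda_0}^-} \frac{\lambda - \bar{\lambda}_n}{\lambda - \lambda_n}, \qquad \delta(\lambda) = \left(\frac{\lambda-\lambda_0}{\lambda+\lambda_0}\right)^{i\nu} \exp\!\left(i\int_{-\lambda_0}^{\lambda_0}\frac{\eta(s)}{s-\lambda}\,ds\right)
\end{equation*}
separately and then combining via $T = B\cdot\delta$. Since $B$ is a rational function with the claimed simple poles and zeros and is continuous across $\mathbb{R}$, item (1) reduces to the analyticity of $\delta$ on $\mathbb{C}\setminus I_-$, which follows from placing the branch cut of $\left(\frac{\lambda-\lambda_0}{\lambda+\lambda_0}\right)^{i\nu}$ along $I_-$ and from the support of the Cauchy integral lying in $I_-$. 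Item (2) is a direct computation: conjugation flips every $i$ in the exponents of $\delta$ and swaps $\lambda_n\leftrightarrow\bar\lambda_n$ in $B$, producing the multiplicative inverse of each factor. For item (3), $B$ has no jump on $\mathbb{R}$, $\delta$ is analytic across $I_+$, and across $I_-$ Plemelj's formula gives a factor $e^{-2\pi\eta(\lambda)}$ from the Cauchy integral while the discontinuity of the power function contributes $e^{-2\pi\nu}$; the product collapses to $1-|r(\lambda)|^2$ by the definitions of $\nu$ and $\eta$.

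For items (4) and (5), I would work with $\log T$, differentiating respectively expanding each factor in the appropriate variable and summing contributions. At $\lambda=0$, the Blaschke factor contributes $\sum_n\bigl(1/\lambda_n-1/\bar\lambda_n\bigr)$ from $(d/d\lambda)\log\frac{\lambda-\bar\lambda_n}{\lambda-\lambda_n}\bigr|_0$, the power function contributes $-2i\nu/\lambda_0$, and the integral yields $i\int_{-\lambda_0}^{\lambda_0}\eta(s)/s^2\,ds$; collecting these into the coefficient of $\lambda$ in $T(\lambda)/T(0)$ produces $T_1$. At infinity, within the cone $|\arg\lambda|\le c<\pi$, $B = 1 + 2i\lambda^{-1}\sum\operatorname{Im}\lambda_n + O(\lambda^{-2})$, $i\nu\log\frac{\lambda-\lambda_0}{\lambda+\lambda_0}\sim -2i\nu\lambda_0/\lambda$, and $i\int \eta(s)/(s-\lambda)\,ds\sim -i\lambda^{-1}\int\eta(s)\,ds$; assembling these gives exactly (\ref{expan3}). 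The cone restriction keeps us away from the branch cut and justifies these expansions uniformly.

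The delicate part is item (6). The strategy is to extract the singular power-law factor from $\delta$ by adding and subtracting the endpoint values $\eta(\pm\lambda_0)\chi_\pm(s)$ inside the Cauchy integral; the subtracted piece $\eta(\pm\lambda_0)\int\chi_\pm(s)/(s-\lambda)\,ds$ produces exactly the singular $\log(\lambda\mp\lambda_0)$ together with the bounded $-\log(\lambda\mp\lambda_0+1)$ term which is absorbed into $\beta^\pm$, leaving the explicit prefactor $(\lambda\mp\lambda_0)^{i\eta(\pm\lambda_0)}\,e^{i\beta^\pm(\lambda,\lambda_0)}$. Under Assumption \ref{assump} the coefficient $r\in H^{1,1}(\mathbb R)$ forces $\eta$ to be H\"older-$1/2$ on $I_-$, so by the Plemelj-Privalov theorem the modified Cauchy integral with density $\eta(s)-\chi_\pm(s)\eta(\pm\lambda_0)$ (which vanishes at the critical endpoint) has a H\"older-$1/2$-continuous extension to $\pm\lambda_0$, giving $|\beta^\pm(\lambda,\lambda_0)-\beta^\pm(\pm\lambda_0,\lambda_0)|\le C|\lambda\mp\lambda_0|^{1/2}$. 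Exponentiating and multiplying by $B$ (which is smooth at $\pm\lambda_0$ because those points lie outside the discrete set $\Lambda$) yields the bound with the prescribed $T_0(\pm\lambda_0)$. The main obstacle is precisely this H\"older estimate for the modified Cauchy integral with endpoint-subtracted density, which is where I expect the technical core of the proof to lie.
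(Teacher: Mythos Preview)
Your proposal is correct and follows the standard route for this class of results. The paper itself gives no proof beyond the sentence ``The proof of above properties can be proved by similar calculation in [yang2019]''; your sketch---splitting $T=B\cdot\delta$, reading off the poles and zeros from $B$, using Plemelj on $I_-$ for the jump, computing $(\log T)'$ for the local expansions at $0$ and $\infty$, and handling item~(6) by the endpoint-subtraction of $\eta(\pm\lambda_0)\chi_\pm$ together with a Plemelj--Privalov H\"older estimate---is exactly the argument carried out in the cited reference and its predecessors (e.g.\ Borghese--Jenkins--McLaughlin). In that sense you have supplied more detail than the paper does.
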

\begin{proof}
The proof of above properties can be proved by similar calculation in \cite{yang2019}.
\end{proof}
Now, we introduce the transformation 
\begin{equation}
M^{(1)}=\hat{M} T^{-\sigma_{3}},\label{trans1}
\end{equation}
it is obvious that $M^{(1)}$ keeps same symmetry with $\hat{M}$
$$\overline{M^{(1)}(y, t, \bar{\lambda})}=M^{(1)}(y, t,-\lambda)=\sigma_{1} M^{(1)}(y, t, \lambda) \sigma_{1}.$$
Using the factorization of $\hat{J}$ in this form 
$$\hat{J}(y, t, \lambda)=\left\{\begin{aligned}
&\begin{pmatrix}
1 & -\overline{r(\bar{\lambda})} e^{-2 i t \theta} \\
0 & 1
\end{pmatrix}
\begin{pmatrix}
1 & 0 \\
r(\lambda) e^{2 i t \theta} & 1
\end{pmatrix},|\lambda|>\lambda_{0}, \\
&\begin{pmatrix}
1 & 0 \\
\frac{r(\lambda)}{1-|r(\lambda)|^{2}} e^{2 i t \theta} & 1
\end{pmatrix}
\begin{pmatrix}
1-|r(\lambda)|^{2} & 0 \\
0 & \frac{1}{1-|r(\lambda)|^{2}}
\end{pmatrix}
\begin{pmatrix}
1 & -\frac{\overline{r(\bar{\lambda})}}{1-|r(\lambda)|^{2}} e^{-2 i t \theta} \\
0 & 1
\end{pmatrix},\lambda|<\lambda_{0}.
\end{aligned}\right.$$
According to $J^{(1)}(y, t, \lambda)=(T_{-})^{\sigma_3}\hat{J}(y, t, \lambda)(T_{+})^{-\sigma_3}$, we can calculate the jump matrix $J^{(1)}$ for $M^{(1)}$
\begin{equation}
J^{(1)}(y, t, \lambda)=\left\{\begin{array}{ll}
\begin{pmatrix}
1 & -\overline{r(\bar{\lambda})}T^{2} e^{-2 i t \theta(\lambda)} \\[6pt]
0 & 1
\end{pmatrix}
\begin{pmatrix}
1 & 0 \\[6pt]
r(\lambda) T^{-2} e^{2 i t \theta(\lambda)} & 1
\end{pmatrix}
,\quad|\lambda|>\lambda_{0}, \\[12pt]
\begin{pmatrix}
1 & 0 \\[6pt]
\frac{r(\lambda)}{1-|r(\lambda)|^{2}} T_{-}^{-2} e^{2 i t \theta(\lambda)} & 1
\end{pmatrix}
\begin{pmatrix}
1 & -\frac{\overline{r(\bar{\lambda})}}{1-|r(\lambda)|^{2}} T_{+}^{2} e^{-2 i t \theta(\lambda)} \\[6pt]
0 & 1
\end{pmatrix}
,\quad|\lambda|<\lambda_{0}.
\end{array}\right.\label{jump3}
\end{equation}
Correspondingly, when $\mu^{(1)}(y, t, \lambda)=(1\quad1) M^{(1)}(y, t, \lambda)$, we have $\mu^{(1)}(y, t, \lambda)=\hat{\mu}(y, t, \lambda) T^{-\sigma_{3}}$.
Thus the transformation (\ref{trans1}) can be expressed as
$$
\left(\hat{\mu}_{1}, \hat{\mu}_{2}\right)(y, t, \lambda) T^{-\sigma_{3}}(\lambda)=(1, 1) M^{(1)}(y, t, \lambda).
$$
In order to solve (\ref{relation5}), we calculate that
\begin{equation}
\left(\hat{\mu}_{1} \hat{\mu}_{2}\right)(y, t, 0) T^{-2}(0)=\left(M_{11}^{(1)}+M_{21}^{(1)}\right)\left(M_{12}^{(1)}+M_{22}^{(1)}\right)(y, t, 0),\label{relation6}
\end{equation}
where $M_{11}^{(1)}(y, t, 0), M_{21}^{(1)}(y, t, 0), M_{12}^{(1)}(y, t, 0), M_{22}^{(1)}(y, t, 0)$ represent the 11,21,12,22 element of $M^{(1)}(y, t, 0)$ respectively.
The function defined by (\ref{trans1}) satisfies the following Riemann-Hilbert problem.\\[6pt]
\textbf{RHP 3}\quad Find a matrix function $M^{(1)}(y, t, \lambda)$ with the following properties
\begin{enumerate}
    \item Analyticity: $M^{(1)}(y, t, \lambda)$ is meromorphic in $\mathbb{C} \backslash \mathbb{R}.$
    \item Jump condition: 
    \begin{equation}
    M^{(1)}_{+}(y, t, \lambda)=M^{(1)}_{-}(y, t, \lambda) J^{(1)}(y, t, \lambda), \quad \lambda \in R,
    \end{equation}
    where $J^{(1)}(y, t, \lambda)$ is defined by (\ref{jump3}).
    \item Normalization: $M^{(1)}(y, t, \lambda) \rightarrow I$, as $\lambda \rightarrow \infty $.
    \item Residues: At $\lambda=\lambda_{n}$ and $\lambda=\overline{\lambda}_{n}$, $M^{(1)}(y, t, \lambda)$ has simple poles and the residues satisfy the conditions\\
    For $n\in \Delta_{\lambda_{0}}^{+}$,
    \begin{equation}
\begin{aligned}
&\mathop{\operatorname{Res}}\limits_{\lambda=\lambda_{n}} M^{(1)}(y, t, \lambda)=\lim\limits _{\lambda \rightarrow \lambda_{n}} M^{(1)}(y, t, \lambda){\begin{pmatrix}
0 & 0 \\[6pt]
c_{n}T^{-2}(\lambda_n)e^{2i\theta_n t} & 0
\end{pmatrix}}, \\
&\mathop{\operatorname{Res}}\limits_{\lambda=\overline{\lambda}_{n}} M^{(1)}(y, t, \lambda)=\lim\limits_{\lambda \rightarrow \overline{\lambda}_{n}} M^{(1)}(y, t, \lambda)\begin{pmatrix}
0 & \overline{c}_{n}T^2(\bar{\lambda}_n) e^{-2 i\bar{\theta}_n t} \\[6pt]
0 & 0
\end{pmatrix}.
\end{aligned}\label{res3}
\end{equation}
For $n\in \Delta_{\lambda_{0}}^{-}$,
\begin{equation}
\begin{array}{l}
\mathop{\operatorname{Res}}\limits_{\lambda=\lambda_{n}} M^{(1)}(y, t, \lambda)=\lim\limits _{\lambda \rightarrow \lambda_{n}} M^{(1)}(y, t, \lambda){\begin{pmatrix}
0 & c_{n}^{-1}(1 / T)^{\prime}\left(\lambda_{n}\right)^{-2}e^{-2i\theta_n t} \\[6pt]
0 & 0
\end{pmatrix}}, \\[15pt]
\mathop{\operatorname{Res}}\limits_{\lambda=\overline{\lambda}_{n}} M^{(1)}(y, t, \lambda)=\lim\limits_{\lambda \rightarrow \overline{\lambda}_{n}} M^{(1)}(y, t, \lambda)\begin{pmatrix}
0 & 0 \\[6pt]
(\overline{c}_{n})^{-1}T^{\prime}\left(\bar{\lambda}_{n}\right)^{-2}e^{2 i \bar{\theta}_n t} & 0
\end{pmatrix},
\end{array}\label{res4}
\end{equation}
where $\theta_n=\theta(y,t,\lambda_n).$
\end{enumerate}
\begin{proof}
Note that the properties $1-3$ of $M^{(1)}(y, t, \lambda)$ follow (\ref{trans1}) and RHP 2, we only prove the residue conditions. Denote $M^{(1)}=(M^{(1)}_1,M^{(1)}_2)$, then (\ref{trans1}) becomes
\begin{equation}
  M^{(1)}=(\hat{M}_1T^{-1}, \hat{M}_2T).\label{res2}  
\end{equation}

For $n\in \Delta_{\lambda_{0}}^{+}$, $T$ and $T^{-1}$ is analytic at each $\lambda_n$ and $\bar{\lambda}_n$, combine (\ref{res1}) and (\ref{res2}), we can get (\ref{res3}) immediately.

For $n\in \Delta_{\lambda_{0}}^{-}$, $\lambda_n$ is a pole for $T$ and $T^{-1}(\lambda_n)=0$, we have
\begin{equation}
\begin{array}{l}
\mathop{\operatorname{Res}}\limits_{\lambda=\lambda_{n}} M_{1}^{(1)}=0,\\[8pt]
\mathop{\operatorname{Res}}\limits_{\lambda=\lambda_{n}} M_{2}^{(1)}=\hat{M}_{2}\left(\lambda_{n}\right)\left[(1/ T)^{\prime}\left(\lambda_{n}\right)\right]^{-1}.
\end{array}\label{cal1}
\end{equation}
And it follows (\ref{trans1}) that
\begin{equation}
M_{1}^{(1)}\left(\lambda_{n}\right)=\lim _{\lambda \rightarrow \lambda_{n}} \hat{M}_{1} T=\mathop{\operatorname{Res}}\limits_{\lambda=\lambda_{n}} \hat{M}_{1}(1 / T)^{\prime}\left(\lambda_{n}\right).\label{cal2}
\end{equation}
According to (\ref{res1}),(\ref{cal1}) and (\ref{cal2}), the residues of $M^{(1)}(y, t, \lambda)$ at $\lambda_n$ can be expressed as (\ref{res4}). And the residues of $M^{(1)}(y, t, \lambda)$ at $\bar{\lambda}_n$ can be calculated in a similarly way.
\end{proof}
\section{A mixed $\bar{\partial}$ -RH problem}
\hspace*{\parindent}
In this section, we introduce a transformation $M^{(1)}(\lambda) \rightarrow M^{(2)}(\lambda)$ which make continuous extensions off the real axis to the jump matrix. Define new contours 
\begin{equation}
\begin{aligned}
&\Sigma_{k}=\left\{\lambda_{0}+h_{k} \lambda_{0} e^{\frac{(2 k-1) \pi i}{4}}, h_{k} \in(0,+\infty)\right\}, \quad k=1,4,\\
&\Sigma_{k}=\left\{\lambda_{0}+h_{k} \lambda_{0} e^{\frac{(2 k-1) \pi i}{4}}, h_{k} \in(0, \sqrt{2})\right\}, \quad k=2,3,\\
&\Sigma_{k}=\left\{-\lambda_{0}+h_{k} \lambda_{0} e^{\frac{(2 k-1) \pi i}{4}}, h_{k} \in(0,+\infty)\right\}, \quad k=6,7,\\
&\Sigma_{k}=\left\{-\lambda_{0}+h_{k} \lambda_{0} e^{\frac{(2 k-1) \pi i}{4}}, h_{k} \in(0, \sqrt{2})\right\}, \quad k=5,8,\\
&\Sigma^{(2)}=\bigcup_{k=1}^{8} \Sigma_{k}.
\end{aligned}
\end{equation}
Thus the complex plane $\mathbb{C}$ is divided into eight open sectors, we apply $\bar{\partial}$ steepest descent method to continuously extend the scattering data in the jump matrix to eight regions and there are no more jumps on the real axis. Denote these regions in counterclockwise order by $\Omega_{k}, k=1, \ldots, 8$ respectively, see Figure \ref{f3}. 
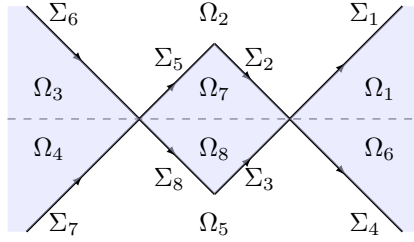
\begin{figure}[H]
 \centering
  \begin{tikzpicture}[node distance=2cm]
  \draw[dashed] (-2.75, 0) -- (2.75, 0);
  \draw[thick ](0,1)--(1,0);
  \draw[-latex](0,1)--(0.5, 0.5);
  \draw[thick ](1,0)--(2.5,-1.5);
  \draw[-latex](1,0)--(1.75,-0.75);
  \draw[thick ](-1,0)--(0,1);
  \draw[-latex](-1,0)--(-0.5, 0.5);
  \draw[thick ](-1,0)--(0,-1);
  \draw[-latex](-1,0)--(-0.5, -0.5);
  \draw[thick ](0,-1)--(1,0);
  \draw[-latex](0,-1)--(0.5, -0.5);
  \draw[thick ](1,0)--(2.5,1.5);
  \draw[-latex](1,0)--(1.75,0.75);
  \draw[thick ](-1,0)--(-2.5,1.5);
  \draw[-latex](-2.5,1.5)--(-1.75,0.75);
    \draw[thick ](-1,0)--(-2.5,-1.5);
  \draw[-latex](-2.5,-1.5)--(-1.75,-0.75);
  \fill[fill=blue!15,fill opacity=0.5] (-1,0)--(0,1)--(1,0)--(0,-1);
  \fill[fill=blue!15,fill opacity=0.5] (1,0)--(2.5,1.5)--(2.75,1.5)--(2.75,-1.5)--(2.5,-1.5);
  \fill[fill=blue!15,fill opacity=0.5] (-1,0)--(-2.5,1.5)--(-2.75,1.5)--(-2.75,-1.5)--(-2.5,-1.5);
  \node at (0, 0.4) {$\Omega_7$};
  \node at (0, -0.4) {$\Omega_8$};
  \node at (2.2, 0.4) {$\Omega_1$};
  \node at (2.2, -0.4) {$\Omega_6$};
  \node at (-2.2, 0.4) {$\Omega_3$};
  \node at (-2.2, -0.4) {$\Omega_4$};
  \node at (0,1.4) {$\Omega_2$};
  \node at (0,-1.4) {$\Omega_5$};
  \node at (-2, 1.4) {$\Sigma_6$};
  \node at (-2, -1.4) {$\Sigma_7$};
  \node at (2, 1.4) {$\Sigma_1$};
  \node at (2, -1.4) {$\Sigma_4$};
  \node at (0.6, 0.8) {$\Sigma_2$};
  \node at (0.6,-0.8) {$\Sigma_3$};
  \node at (-0.6, 0.8) {$\Sigma_5$};
  \node at (-0.6,-0.8) {$\Sigma_8$};
  \end{tikzpicture}
\caption{The contours of $ \Sigma^{(2)} $ and regions.}.
\label{f3}
\end{figure}

Let
\begin{equation}
d =\frac{1}{2} \min _{\alpha \neq \beta \in \Lambda \cup \overline{\Lambda}}|\alpha-\beta|.\label{dist}
\end{equation}
For $\forall \lambda_{n}=x_{n}+i y_{n} \in \Lambda$, since the poles are conjugated and not on the real axis, we have $\bar{\lambda}_{n}=x_{n}+i y_{n} \in \overline{\Lambda}$, then $\operatorname{dist}(\Lambda, \mathbb{R}) \geq d$. Next we introduce the characteristic function near the discrete spectrum
\begin{equation}
X_{\Lambda}(\lambda)=\left\{\begin{array}{ll}
1, & \operatorname{dist}\left(\lambda, \Lambda \cup \overline{\Lambda}\right)<d / 3, \\[6pt]
0, & \operatorname{dist}\left(\lambda, \Lambda \cup \overline{\Lambda}\right)>2 d / 3.
\end{array}\right.
\end{equation}
For our purpose of continuous extension to the scattering data in the jump matrix, we introduce a transformation
\begin{equation}
M^{(2)}(\lambda)=M^{(1)}(\lambda) \mathcal{R}^{(2)}(\lambda),\label{trans2}
\end{equation}
Correspondingly, we define $(\mu^{(2)}_1\quad \mu^{(2)}_2)(y, t, \lambda)=(1\quad1) M^{(2)}(y, t, \lambda)$, and $R^{(2)}$ is chosen to satisfy the following properties:
\begin{itemize}
\item $M^{(2)}(\lambda)$ has no jump on the real axis;
\item The norm of $M^{(2)}(\lambda)$ is well be controlled;
\item The transformation keep the residue conditions unchanged.
\end{itemize}
In order to meet the above properties, we can choose the boundary values of $\mathcal{R}^{(2)}(\lambda)$ by matching the transformed RH problem to a well known model RH problem, then the following proposition holds.
\begin{proposition}
It is possible to define functions $R_{j}: \bar{\Omega}_{j} \rightarrow C, j=1,3,4,6,7,8$ which satisfy the boundary conditions\\
$$
\begin{aligned}
&R_{1}(\lambda)=\left\{\begin{aligned}
&r(\lambda) T(\lambda)^{-2}, & \lambda \in\left(\lambda_{0},+\infty\right), \\
&r\left(\lambda_{0}\right) T_{0}\left(\lambda_{0}\right)^{-2}\left(\lambda-\lambda_{0}\right)^{-2 i \eta\left(\lambda_{0}\right)}\left(1-X_{\Lambda}(\lambda)\right), &\qquad \qquad \qquad\lambda \in \Sigma_{1},
\end{aligned}\right. \\[6pt]
&R_{3}(\lambda)=\left\{\begin{aligned}
&r(\lambda) T(\lambda)^{-2}, & \lambda \in\left(-\infty,-\lambda_{0}\right), \\
&r\left(-\lambda_{0}\right) T_{0}\left(-\lambda_{0}\right)^{-2}\left(\lambda+\lambda_{0}\right)^{-2 i \eta\left(-\lambda_{0}\right)}\left(1-X_{\Lambda}(\lambda)\right), & \lambda \in \Sigma_{6},
\end{aligned}\right.\\[6pt]
&R_{4}(\lambda)=\left\{\begin{aligned}
&\overline{r(\bar{\lambda})} T(\lambda)^{2}, &\qquad \lambda \in\left(-\infty,-\lambda_{0}\right), \\
&\overline{r\left(-\lambda_{0}\right)} T_0\left(-\lambda_{0}\right)^{2}\left(\lambda+\lambda_{0}\right)^{2 i \eta\left(-\lambda_{0}\right)}\left(1-X_{\Lambda}(\lambda)\right), & \lambda \in \Sigma_{7},
\end{aligned}\right.\\[6pt]
&R_{6}(\lambda)=\left\{\begin{aligned}
&\overline{r(\bar{\lambda})} T(\lambda)^{2}, & \qquad \quad  \qquad\lambda \in \left(\lambda_{0},+\infty\right), \\
&\overline{r\left(\lambda_{0}\right)} T_0\left(\lambda_{0}\right)^{2}\left(\lambda-\lambda_{0}\right)^{2 i \eta\left(\lambda_{0}\right)}\left(1-X_{\Lambda}(\lambda)\right), & \lambda \in \Sigma_{4},
\end{aligned}\right.\\[6pt]
&R_{7}(\lambda)=\left\{\begin{aligned}
&\frac{\overline{r(\bar{\lambda})}}{1-|r(\lambda)|^{2}} T_{+}(\lambda)^{2}, & \lambda \in \left(-\lambda_{0}, \lambda_{0}\right), \\
&\frac{\overline{\gamma\left(\lambda_{0}\right)}}{1-\left|r\left(\lambda_{0}\right)\right|^{2}} T_{0}\left(\lambda_{0}\right)^{2}\left(\lambda-\lambda_{0}\right)^{2 i \eta\left(\lambda_{0}\right)}\left(1-X_{\Lambda}(\lambda)\right), & \lambda \in \Sigma_{2}, \\
&\frac{\overline{\gamma\left(-\lambda_{0}\right)}}{1-\left|r\left(-\lambda_{0}\right)\right|^{2}} T_{0}\left(-\lambda_{0}\right)^{2}\left(\lambda+\lambda_{0}\right)^{2 i \eta\left(-\lambda_{0}\right)}\left(1-X_{\Lambda}(\lambda)\right), & \lambda \in \Sigma_{5},
\end{aligned}\right.\\[6pt]
&R_{8}(\lambda)=\left\{\begin{aligned}
&\frac{r(\lambda)}{1-|r(\lambda)|^{2}} T_-(\lambda)^{-2}, & \lambda \in\left(-\lambda_{0}, \lambda_{0}\right), \\
&\frac{r\left(\lambda_{0}\right)}{1-|r(\lambda_0)|^2} T_0\left(\lambda_{0}\right)^{-2}\left(\lambda-\lambda_{0}\right)^{-2 i \eta\left(\lambda_{0}\right)}\left(1-X_{\Lambda}(\lambda)\right), & \lambda \in \Sigma_{3}, \\
&\frac{r\left(-\lambda_{0}\right)}{1-\left|r\left(-\lambda_{0}\right)\right|^{2}} T_{0}\left(-\lambda_{0}\right)^{-2}\left(\lambda+\lambda_{0}\right)^{-2 i \eta\left(-\lambda_{0}\right)}\left(1-X_{\Lambda}(\lambda)\right), & \lambda \in \Sigma_{8}.
\end{aligned}\right.
\end{aligned}
$$
And we have the norm estimation\\
for $j=1,6,7,8,$
\begin{equation}
\begin{array}{l}
\left|R_{j}(\lambda)\right| \lesssim \sin ^{2}\left(\arg \left(\lambda-\lambda_{0}\right)\right)+\langle\operatorname{Re}(\lambda)\rangle^{-1 / 2}, \\[6pt]
\left|\overline{\partial} R_{j}(\lambda)\right| \lesssim\left|\bar{\partial} X_{\Lambda}(\lambda)\right|+\left|s_{j}^{\prime}(\operatorname{Re\lambda})\right|+\left|\lambda-\lambda_{0}\right|^{-1 / 2},
\end{array}\label{est3}
\end{equation}
for $j=3,4,7,8,$
\begin{equation}
\begin{array}{l}
\left|R_{j}(\lambda)\right| \lesssim \sin ^{2}\left(\arg \left(\lambda+\lambda_{0}\right)\right)+\langle\operatorname{Re}(\lambda)\rangle^{-1 / 2}, \\[6pt]
\left|\overline{\partial} R_{j}(\lambda)\right| \lesssim\left|\bar{\partial} X_{\Lambda}(\lambda)\right|+\left|s_{j}^{\prime}(\operatorname{Re\lambda})\right|+\left|\lambda+\lambda_{0}\right|^{-1 / 2},
\end{array}\label{est4}
\end{equation}
where 
$$\langle\cdot\rangle:=\sqrt{1+(\cdot)^{2}},$$
$$ s_1=s_3=r(\lambda),\quad s_4=s_6=\overline{r(\bar{\lambda})},$$
$$s_7=\frac{\overline{r(\bar{\lambda})}}{1-|r(\lambda)|^{2}},\quad s_8=\frac{r(\lambda)}{1-|r(\lambda)|^{2}}.$$
And \begin{equation}
\bar{\partial} R_{j}(\lambda)=0, \quad \text { if } \lambda \in \Omega_{2} \cup \Omega_{5} \text { or } \operatorname{dist}(\lambda, \Lambda \cup \overline{\Lambda})<d / 3.
\end{equation}
\end{proposition}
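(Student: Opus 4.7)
The plan is to construct each $R_j$ by angular interpolation, in polar coordinates centered at the nearest stationary phase point $\pm\lambda_0$, between the prescribed real-axis value and the ``frozen'' value on the adjacent ray $\Sigma_k$. The two boundary formulas are selected precisely so that, on the real line, the triangular factorization (\ref{jump3}) of $J^{(1)}$ is absorbed by $M^{(1)}\mathcal{R}^{(2)}$ and no jump survives for $M^{(2)}$ there, while on $\Sigma_k$ the remaining jump matches the parabolic-cylinder model used in \cite{xiao2019}. The multiplicative cut-off $1-X_\Lambda$ on the ray side ensures $\mathcal{R}^{(2)}\equiv I$ in a neighborhood of each element of $\Lambda\cup\bar\Lambda$, so that the residue conditions of RHP~3 transfer unchanged to $M^{(2)}$.

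Concretely, for $\lambda\in\Omega_1$ write $\lambda-\lambda_0=\rho e^{i\omega}$ with $\omega\in[0,\pi/4]$ and let $\mathcal{X}(\omega)$ be a smooth cutoff with $\mathcal{X}\equiv 0$ near $\omega=0$, $\mathcal{X}\equiv 1$ near $\omega=\pi/4$, vanishing quadratically at $\omega=0$. Define
\begin{equation*}
R_{1}(\lambda)=\bigl(1-\mathcal{X}(\omega)\bigr)\,r(\operatorname{Re}\lambda)\,T(\lambda)^{-2}+\mathcal{X}(\omega)\,r(\lambda_{0})\,T_{0}(\lambda_{0})^{-2}(\lambda-\lambda_{0})^{-2i\eta(\lambda_{0})}\bigl(1-X_{\Lambda}(\lambda)\bigr).
\end{equation*}
By construction this takes the required value on $(\lambda_0,\infty)$ (where $\omega=0$) and on $\Sigma_1$ (where $\omega=\pi/4$). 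The remaining $R_j$ are built analogously in their sectors; in the two central sectors $\Omega_7,\Omega_8$ the construction is carried out separately on each half $\operatorname{Re}\lambda\gtrless 0$, interpolating from $+\lambda_0$ and $-\lambda_0$ respectively, with the $(1-|r|^2)^{-1}$ factors dictated by the second factorization of $J^{(1)}$. In $\Omega_2$ and $\Omega_5$ one simply sets $\mathcal{R}^{(2)}=I$, which gives $\bar\partial R_j=0$ there.

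The pointwise bound on $|R_j|$ follows by writing $r(\operatorname{Re}\lambda)=r(\pm\lambda_0)+[r(\operatorname{Re}\lambda)-r(\pm\lambda_0)]$ and estimating the increment via Cauchy--Schwarz and $r'\in L^2$, producing the $\langle\operatorname{Re}\lambda\rangle^{-1/2}$ contribution; the quadratic vanishing of $\mathcal{X}$ at $\omega=0$ yields the $\sin^2(\arg(\lambda\mp\lambda_0))$ factor in front of the frozen branch. For the $\bar\partial$ estimate, apply $\bar\partial=\tfrac12 e^{i\omega}(\partial_\rho+i\rho^{-1}\partial_\omega)$: the radial derivative hits $r$ and produces $|s_j'(\operatorname{Re}\lambda)|$, a derivative on $1-X_\Lambda$ produces $|\bar\partial X_\Lambda|$ (which is supported in the annulus $d/3\le\operatorname{dist}(\lambda,\Lambda\cup\bar\Lambda)\le 2d/3$), and the $\omega$-derivative of $\mathcal{X}$ applied to the difference of the two interpolants furnishes the $|\lambda\mp\lambda_0|^{-1/2}$ term.

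The main technical obstacle is exactly this last $|\lambda\mp\lambda_0|^{-1/2}$ bound: the angular derivative carries a $\rho^{-1}$ weight from the polar form of $\bar\partial$, so without cancellation one would obtain a non-integrable $\rho^{-1}$ singularity. The needed cancellation is supplied by Proposition~4.1~(6), which asserts that $T(\lambda)-T_0(\pm\lambda_0)(\lambda\mp\lambda_0)^{i\eta(\pm\lambda_0)}=O(\rho^{1/2})$; combined with the Hölder continuity of $r\in H^{1,1}(\mathbb{R})$ this gives the same order for the difference of the two interpolated boundary values, so dividing by $\rho$ yields exactly $\rho^{-1/2}$, which is $L^2_{\mathrm{loc}}$ and hence admissible for the later solid-Cauchy estimates in the $\bar\partial$-analysis. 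Checking this square-root consistency case by case for all six formulas of the proposition is the bulk of the verification; once it is made explicit, the stated bounds (\ref{est3})--(\ref{est4}) and the vanishing of $\bar\partial R_j$ near $\Lambda\cup\bar\Lambda$ and in $\Omega_2\cup\Omega_5$ follow by direct computation.
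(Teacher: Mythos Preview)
Your construction is the standard angular-interpolation scheme that the paper defers to by citing \cite{yang2019}, and your identification of Proposition~4.1(6) as the source of the crucial $\rho^{1/2}$ cancellation for the $\omega$-derivative is exactly right.

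One small correction to your explicit formula for $R_1$: the factor $(1-X_\Lambda(\lambda))$ must multiply the \emph{entire} interpolant, not only the frozen branch. As written, the term $(1-\mathcal{X}(\omega))\,r(\operatorname{Re}\lambda)\,T(\lambda)^{-2}$ is not holomorphic in the disks $\operatorname{dist}(\lambda,\Lambda\cup\bar\Lambda)<d/3$ (both $r(\operatorname{Re}\lambda)$ and $\mathcal{X}(\omega)$ contribute to $\bar\partial$), so $\bar\partial R_1$ would fail to vanish there and the residue conditions would not transfer cleanly to $M^{(2)}$. Because $X_\Lambda\equiv 0$ on $\mathbb{R}$ (the poles sit at distance $\ge d$ from the axis), multiplying the whole expression by $1-X_\Lambda$ does not disturb the real-axis boundary value, so the fix is costless and the rest of your argument goes through unchanged.
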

\begin{proof}
The proof is similar to that in \cite{yang2019}.
\end{proof}
Then we can define $\mathcal{R}^{(2)}$ as 
\begin{equation}
\mathcal{R}^{(2)}(\lambda)=\left\{\begin{aligned}
&\begin{pmatrix}
1 & 0 \\[6pt]
-R_{j}(\lambda) e^{2 i t \theta} & 1
\end{pmatrix}, & \lambda \in \Omega_{j}, j=1,3,\\[6pt]
&\begin{pmatrix}
1 & -R_{j}(\lambda) e^{-2 i t \theta} \\[6pt]
0 & 1
\end{pmatrix},& \lambda \in \Omega_{j}, j=4,6, \\[6pt]
&\begin{pmatrix}
1 & R_{7}(\lambda)e^{-2 i t \theta} \\[6pt]
0 & 1
\end{pmatrix}, & \lambda \in \Omega_{7}, \\[6pt]
&\begin{pmatrix}
1 & 0 \\[6pt]
R_{8}(\lambda) e^{2 i t \theta} & 1
\end{pmatrix},& \lambda \in \Omega_{8}, \\[6pt]
&\quad I,& \lambda \in \Omega_{2} \cup \Omega_{5}.
\end{aligned}\right. 
\end{equation}
According to the above definition of $\mathcal{R}^{(2)}(\lambda)$ and (\ref{trans2}), we get the $M^{(2)}$ which domain is $\mathbb{C} \backslash\left(\Sigma^{(2)}\cup \Lambda \cup \overline{\Lambda}\right)$ satisfy the following $\bar{\partial}-\mathrm{RH}$ problem.\\[6pt]
\textbf{RHP 4}\quad Find a matrix function $M^{(2)}(y, t, \lambda)$ with the following properties
\begin{enumerate}
    \item Analyticity: $M^{(2)}(y, t, \lambda)$ is meromorphic in $\mathbb{C} \backslash \Sigma^{(2)}.$
    \item Jump condition: 
    \begin{equation}
    M^{(2)}_{+}(y, t, \lambda)=M^{(2)}_{-}(y, t, \lambda) J^{(2)}(y, t, \lambda), \quad \lambda \in \Sigma^{(2)}, 
    \end{equation}
    where $J^{(2)}(y, t, \lambda)$ is defined as
    \begin{equation}
J^{(2)}(y, t, \lambda)=\left\{
\begin{array}{ll}
\begin{pmatrix}
1 & 0 \\[6pt]
R_{1}(\lambda) e^{2 i t \theta} & 1
\end{pmatrix},& \lambda \in \Sigma_{1}, \\[15pt]
\begin{pmatrix}
1 & -R_{7}(\lambda) e^{-2i t \theta} \\[6pt]
0 & 1
\end{pmatrix},& \lambda \in \Sigma_{2} \cup \Sigma_{5}, \\[15pt]
\begin{pmatrix}
1 & 0 \\[6pt]
R_{8}(\lambda) e^{2 i t \theta} & 1
\end{pmatrix},& \lambda \in \Sigma_{3} \cup \Sigma_{8},\\[15pt]
\begin{pmatrix}
1 & -R_{6}(\lambda) e^{-2 i t \theta} \\[6pt]
0 & 1
\end{pmatrix}, & \lambda \in \Sigma_{4}, \\[15pt]
\begin{pmatrix}
1 & 0 \\[6pt]
R_{3}(\lambda) e^{2 i t \theta} & 1
\end{pmatrix},& \lambda \in \Sigma_{6},\\[15pt]
\begin{pmatrix}
1 & -R_{4} \left(\lambda\right) e^{2 i t \theta} \\[6pt]
0 & 1
\end{pmatrix},& \lambda \in \Sigma_{7}.
\end{array}\right.\label{jump4}
\end{equation}
    \item Normalization: $M^{(2)}(y, t, \lambda) \rightarrow I$, as $\lambda \rightarrow \infty $.
    \item $\bar{\partial}$-Derivative: For $\lambda \in \mathbb{C} \backslash\left(\Sigma^{(2)}\cup \Lambda \cup \overline{\Lambda}\right)$ we have 
    \begin{equation}
\bar{\partial} M^{(2)}=M^{(2)} \bar{\partial} \mathcal{R}^{(2)},\label{par}
\end{equation}
where
\begin{equation}
\bar{\partial}\mathcal{R}^{(2)}(\lambda)=\left\{\begin{array}{ll}
\begin{pmatrix}
0 & 0 \\[6pt]
-\bar{\partial}R_{j}(\lambda) e^{2 i t \theta} & 0
\end{pmatrix}, & \lambda \in \Omega_{j}, j=1,3,\\[15pt]
\begin{pmatrix}
0 & -\bar{\partial}R_{j}(\lambda) e^{-2 i t \theta} \\[6pt]
0 & 0
\end{pmatrix},& \lambda \in \Omega_{j}, j=4,6, \\[15pt]
\begin{pmatrix}
0 & \bar{\partial}R_{7}(\lambda)e^{-2 i t \theta} \\[6pt]
0 & 0
\end{pmatrix}, & \lambda \in \Omega_{7}, \\[15pt]
\begin{pmatrix}
0 & 0 \\[6pt]
\bar{\partial}R_{8}(\lambda) e^{2 i t \theta} & 0
\end{pmatrix},& \lambda \in \Omega_{8}, \\[15pt]
\quad 0, & \lambda \in \Omega_{2} \cup \Omega_{5}.
\end{array}\right. 
\end{equation}
    \item Residues: At $\lambda=\lambda_{n}$ and $\lambda=\overline{\lambda}_{n}$, $M^{(2)}(y, t, \lambda)$ has simple poles and the residues satisfy the conditions\\
    For $n\in \Delta_{\lambda_{0}}^{+}$,
    \begin{equation}
\begin{array}{l}
\mathop{\operatorname{Res}}\limits_{\lambda=\lambda_{n}} M^{(2)}(y, t, \lambda)=\lim\limits _{\lambda \rightarrow \lambda_{n}} M^{(2)}(y, t, \lambda){\begin{pmatrix}
0 & 0 \\[6pt]
c_{n}T^{-2}(\lambda_n)e^{2i\theta_n t} & 0
\end{pmatrix}}, \\[15pt]
\mathop{\operatorname{Res}}\limits_{\lambda=\overline{\lambda}_{n}} M^{(2)}(y, t, \lambda)=\lim\limits_{\lambda \rightarrow \overline{\lambda}_{n}} M^{(2)}(y, t, \lambda)\begin{pmatrix}
0 & \overline{c}_{n}T^2(\bar{\lambda}_n) e^{-2 i \bar{\theta}_n t} \\[6pt]
0 & 0
\end{pmatrix}.
\end{array}\label{res3}
\end{equation}
For $n\in \Delta_{\lambda_{0}}^{-}$,
\begin{equation}
\begin{array}{l}
\mathop{\operatorname{Res}}\limits_{\lambda=\lambda_{n}} M^{(2)}(y, t, \lambda)=\lim\limits _{\lambda \rightarrow \lambda_{n}} M^{(2)}(y, t, \lambda){\begin{pmatrix}
0 & c_{n}^{-1}(1 / T)^{\prime}\left(\lambda_{n}\right)^{-2}e^{-2i\theta_n t} \\[6pt]
0 & 0
\end{pmatrix}}, \\[15pt]
\mathop{\operatorname{Res}}\limits_{\lambda=\overline{\lambda}_{n}} M^{(2)}(y, t, \lambda)=\lim\limits_{\lambda \rightarrow \overline{\lambda}_{n}} M^{(2)}(y, t, \lambda)\begin{pmatrix}
0 & 0 \\[6pt]
(\overline{c}_{n})^{-1}T^{\prime}\left(\bar{\lambda}_{n}\right)^{-2}e^{2 i \bar{\theta}_n t} & 0
\end{pmatrix}.
\end{array}\label{res4}
\end{equation}
\end{enumerate} 

\section{Decomposition of the mixed $\bar{\partial}$ -RH problem}
\hspace*{\parindent}
In this section, our goal is to decompose this mixed $\bar{\partial}$ -RH problem into two parts according to whether the $\bar{\partial} \mathcal{R}^{(2)}$ is equal to zero and then solve them separately. The decomposition can be expressed as 
\begin{equation}
M^{(2)}(y, t, \lambda)=\left\{\begin{aligned}
&M_{RHP}^{(2)}(y, t, \lambda), & \bar{\partial} \mathcal{R}^{(2)}=0, \\
&M^{(3)}(y, t, \lambda) M_{RHP}^{(2)}(y, t, \lambda), &\bar{\partial} \mathcal{R}^{(2)} \neq 0,
\end{aligned}\right.\label{dec}
\end{equation}
where $M_{RHP}^{(2)}$ indicates the pure RH part in the mixed $\bar{\partial}$ -RH problem, which implies $M_{RHP}^{(2)}$ satisfies the same jump condition and residues. 

For the first step, considering the case of $\bar{\partial} \mathcal{R}^{(2)}=0$, we establish a RH problem for $M_{RHP}^{(2)}$ as follows.\\[6pt]
\textbf{RHP 5}\quad Find a matrix function $M_{RHP}^{(2)}(y, t, \lambda)$ with the following properties
\begin{enumerate}
    \item Analyticity: $M_{RHP}^{(2)}(y, t, \lambda)$ is meromorphic in $\mathbb{C} \backslash \Sigma^{(2)}.$
    \item Jump condition: 
    \begin{equation}
    M_{RHP+}^{(2)}=M_{RHP-}^{(2)} J^{(2)}(y, t, \lambda), \quad \lambda \in \Sigma^{(2)},
    \end{equation}
    where $J^{(2)}(y, t, \lambda)$ is defined as (\ref{jump4}).
    \item Normalization: $M_{RHP}^{(2)}(y, t, \lambda) \rightarrow I$, as $\lambda \rightarrow \infty $.
    \item $\bar{\partial}$-Derivative: For $\lambda \in \mathbb{C},\quad \bar{\partial} \mathcal{R}^{(2)}=0.$
    \item Residues: At $\lambda=\lambda_{n}$ and $\lambda=\overline{\lambda}_{n}$, $M_{RHP}^{(2)}(y, t, \lambda)$ has simple poles and the residues satisfy the conditions\\
    For $n\in \Delta_{\lambda_{0}}^{+}$,
    \begin{equation}
\begin{array}{l}
\mathop{\operatorname{Res}}\limits_{\lambda=\lambda_{n}} M_{RHP}^{(2)}(y, t, \lambda)=\lim\limits _{\lambda \rightarrow \lambda_{n}} M_{RHP}^{(2)}(y, t, \lambda){\begin{pmatrix}
0 & 0 \\[6pt]
c_{n}T^{-2}(\lambda_n)e^{2i\theta_n t} & 0
\end{pmatrix}}, \\[15pt]
\mathop{\operatorname{Res}}\limits_{\lambda=\overline{\lambda}_{n}} M_{RHP}^{(2)}(y, t, \lambda)=\lim\limits_{\lambda \rightarrow \overline{\lambda}_{n}} M_{RHP}^{(2)}(y, t, \lambda)\begin{pmatrix}
0 & \overline{c}_{n}T^2(\bar{\lambda}_n) e^{-2 i \bar{\theta}_n t} \\[6pt]
0 & 0
\end{pmatrix}.
\end{array}\label{res3}
\end{equation}
For $n\in \Delta_{\lambda_{0}}^{-}$,
\begin{equation}
\begin{array}{l}
\mathop{\operatorname{Res}}\limits_{\lambda=\lambda_{n}} M_{RHP}^{(2)}(y, t, \lambda)=\lim\limits _{\lambda \rightarrow \lambda_{n}} M_{RHP}^{(2)}(y, t, \lambda){\begin{pmatrix}
0 & c_{n}^{-1}(1 / T)^{\prime}\left(\lambda_{n}\right)^{-2}e^{-2i\theta_n t} \\[6pt]
0 & 0
\end{pmatrix}}, \\[15pt]
\mathop{\operatorname{Res}}\limits_{\lambda=\overline{\lambda}_{n}} M_{RHP}^{(2)}(y, t, \lambda)=\lim\limits_{\lambda \rightarrow \overline{\lambda}_{n}} M_{RHP}^{(2)}(y, t, \lambda)\begin{pmatrix}
0 & 0 \\[6pt]
(\overline{c}_{n})^{-1}T^{\prime}\left(\bar{\lambda}_{n}\right)^{-2}e^{2 i \bar{\theta}_n t} & 0
\end{pmatrix}.
\end{array}\label{res4}
\end{equation}
\end{enumerate} 
The existence and asymptotic of $M_{RHP}^{(2)}(y, t, \lambda)$ will be shown in Section 8.
To solve $M_{RHP}^{(2)}(y, t, \lambda)$, introducing the neighborhoods of $\pm\lambda_0$
\begin{equation}
U_{\pm \lambda_{0}}=\left\{\lambda:\left|\lambda \mp \lambda_{0}\right| \leq \min \left\{\frac{\lambda_{0}}{2}, d/ 3\right\} \triangleq \varepsilon\right\}. 
\end{equation}
From the above definition and $\operatorname{dist}(\Lambda, \mathbb{R}) \geq d$ we immediately know that $\mu_{RHP}^{(2)}$ and  $\mu_{\lambda_{0}}^{(2)}$ have no poles in the neighborhoods. And the jump matrix admits the following proposition.
\begin{proposition}
The jump matrix $J^{(2)}(y, t, \lambda)$ defined by (\ref{jump4}) satisfies the estimation
\begin{equation}
\begin{aligned}
&\left\|J^{(2)}-I\right\| _{L^{\infty}\left(\Sigma_{+}^{(2)}\backslash U_{ \lambda_0}\right)}=\mathcal{O}\left(e^{-16|t| \left(|p|-\lambda_{0}\right)|q|\left(2 \lambda_{0}+|p|\right)}\right),\\
&\left\|J^{(2)}-I\right\| _{L^{\infty}\left(\Sigma_{-}^{(2)}\backslash U_{- \lambda_0}\right)}=\mathcal{O}\left(e^{-16|t| \left(|p|+\lambda_{0}\right)|q|\left(2 \lambda_{0}+|p|\right)}\right).
\end{aligned}\label{est}
\end{equation}
where $\lambda = p + iq,\quad p,q\in \mathbb{R},$ and $$\Sigma_{+}^{(2)}=\bigcup_{k=1}^{4} \Sigma_{k},\quad\Sigma_{-}^{(2)}=\bigcup_{k=5}^{8} \Sigma_{k}.$$\label{prop}
\end{proposition}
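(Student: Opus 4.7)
The strategy is to reduce the $L^\infty$ bound on $J^{(2)}-I$ to a pointwise estimate of the exponential factor $|e^{\pm 2it\theta(\lambda)}|$ on each contour $\Sigma_k$. From (\ref{jump4}), on every $\Sigma_k$ only one off-diagonal entry of $J^{(2)}-I$ is nonzero, of the form $\pm R_j(\lambda)\,e^{\pm 2it\theta(\lambda)}$. The previous proposition provides uniform boundedness $|R_j(\lambda)|\lesssim 1$ on $\Sigma_k$, since the first inequalities in (\ref{est3}) and (\ref{est4}) bound $R_j$ by $\sin^2(\arg(\lambda\mp\lambda_0))+\langle\operatorname{Re}\lambda\rangle^{-1/2}\leq 2$. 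The task therefore reduces to bounding the single exponential.

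The main computation uses the signature formula (\ref{sig}), which for $\lambda=p+iq$ factors as
\begin{equation*}
\operatorname{Re}\bigl(2i\theta(\lambda)\bigr)=-8q\bigl[3(p-\lambda_0)(p+\lambda_0)-q^2\bigr].
\end{equation*}
Parametrizing each $\Sigma_k\subset\Sigma_+^{(2)}$ as $\lambda=\lambda_0+h\lambda_0\,e^{i(2k-1)\pi/4}$ so that $|p-\lambda_0|=|q|=h\lambda_0/\sqrt{2}$, a direct substitution simplifies the bracket to $\pm|q|(2\lambda_0+|p|)$, the sign arranged so that
\begin{equation*}
\operatorname{Re}\bigl(\pm 2it\theta\bigr)=-16|t|\,|p-\lambda_0|\,|q|\,(2\lambda_0+|p|)
\end{equation*}
on each contour. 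The sign of the exponential appearing in the corresponding factor of $J^{(2)}$ is precisely the one that makes this real part negative, as one verifies against Figure \ref{figure1}. Outside $U_{\lambda_0}$ one has $h\lambda_0=|\lambda-\lambda_0|\geq \epsilon$, so $|q|$ and $|p-\lambda_0|$ are uniformly bounded below, producing the claimed exponential decay in $t$. The bound on $\Sigma_-^{(2)}\setminus U_{-\lambda_0}$ follows by the symmetry $\lambda\mapsto-\lambda$, which conjugates the geometry around $+\lambda_0$ to that around $-\lambda_0$.

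The main obstacle is bookkeeping rather than analysis: there are eight contours, each with its own parametrization and its own sign in the exponential inherited from whether the jump factor is upper- or lower-triangular, and the correct sign of $\operatorname{Re}(\pm 2it\theta)$ must be verified case by case. The only subtle point concerns the bounded contours $\Sigma_2,\Sigma_3$, on which $|p|<\lambda_0$: the factor $(|p|-\lambda_0)$ in (\ref{est}) is then read as $|p-\lambda_0|$, which is positive and coincides with the actual decay rate produced by the parametrization. The deeper reason the estimate works uniformly is structural: the contours $\Sigma_k$ are chosen as steepest descent rays emanating from the stationary phase points, so along each of them $\operatorname{Re}(\pm 2it\theta)$ is quadratic-like in $|\lambda-(\pm\lambda_0)|$ with a negative definite coefficient, yielding Gaussian-type control that is converted to the stated uniform exponential bound once the neighborhoods $U_{\pm\lambda_0}$ are excised.
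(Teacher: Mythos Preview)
The paper states this proposition without proof, so there is no argument to compare against; your direct computation of $\operatorname{Re}(2it\theta)$ along each ray, combined with the uniform bound on $R_j$ from the preceding proposition, is exactly the standard route and is correct. Your observation that on the bounded contours $\Sigma_2,\Sigma_3$ the factor $(|p|-\lambda_0)$ in the stated estimate must be read as $|p-\lambda_0|$ is also correct and worth noting, since as literally written the exponent would have the wrong sign there.
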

The above proposition means that the jump matrix tends to $I$ when 
$\lambda \in \Sigma_{\pm}^{(2)} \backslash U_{\pm \lambda_0}.$ Thus, as $\lambda \in \mathbb{C} \backslash U_{\pm \lambda_0},$ there is only exponential infinitesimal error while we completely ignoring the jump condition of $M_{RHP}^{(2)}(y, t, \lambda)$.
Furthermore, we break $M_{RHP}^{(2)}(y, t, \lambda)$ 
into three parts 
\begin{equation}
\mu_{RHP}^{(2)}(y, t, \lambda)=\left\{\begin{aligned}
&E(\lambda) M_{out}^{(2)}(\lambda), & \lambda \notin U_{\pm\lambda_{0}}, \\
&E(\lambda) M^{(\lambda_0)}(\lambda), & \lambda \in U_{\lambda_{0}}, \\
&E(\lambda) M^{(-\lambda_0)}(\lambda), & \lambda \in U_{-\lambda_{0}}.
\end{aligned}\right.\label{3part}
\end{equation}
Obviously, $M_{out}^{(2)}$ can be solved by a model RHP obtained by ignoring
the jump conditions of RHP5, see Section 7. For $M^{(\pm\lambda_0)}$, matching it to a parabolic cylinder model in $U_{\pm \lambda_0}$, then we can get the approximate parabolic cylinder function solution, the details see Section 8. And $E(\lambda)$ is a error function, which is a solution of a small-norm RH problem, which will be solved in Section 9.

Next step, considering the case of $\bar{\partial} \mathcal{R}^{(2)} \neq 0,$ we introduce a transformation 
\begin{equation}
M^{(3)}(y, t, \lambda)=M^{(2)}(y, t, \lambda)(M_{RHP}^{(2)})^{-1}.
\end{equation}
Thus the jump  disappeared and $M^{(3)}(y, t, \lambda)$ is continuous in $\mathbb{C}\backslash (\Sigma^{(2)}\cup \Lambda \cup \bar{\Lambda})$, we get a pure $\bar{\partial}$ problem.\\[6pt]
\textbf{RHP 6}\quad Find a matrix function $M^{(3)}(y, t, \lambda)$ with the following properties
\begin{enumerate}
    \item Analyticity: $M^{(3)}(y, t, \lambda)$ is continuous in $\mathbb{C} \backslash (\Sigma^{(2)}\cup \Lambda \cup \bar{\Lambda}).$
    \item Normalization: $M^{(3)}(y, t, \lambda) \rightarrow I$, as $\lambda \rightarrow \infty $.
    \item $\bar{\partial}$-Derivative: For $\lambda \in \mathbb{C}\backslash (\Sigma^{(2)}\cup \Lambda \cup \bar{\Lambda})$, $ \bar{\partial} M^{(3)}(\lambda)=M^{(3)}(\lambda) W^{(3)}(\lambda)$, where
    \begin{equation}
        W^{(3)}(\lambda)=M_{RHP}^{(2)}(\lambda)\bar{\partial}\mathcal{R}^{(2)}(M_{RHP}^{(2)}(\lambda))^{-1}.
    \end{equation}
\end{enumerate} 
\begin{proof}
The analyticity and normalization of $M^{(3)}(\lambda)$ can be directly derived from properties of the $M^{(2)}(\lambda)$ and $M_{RHP}^{(2)}(\lambda)$. Notice that $M^{(2)}(\lambda)$ and $M_{RHP}^{(2)}(\lambda)$ satisfy same jump condition, we have
\begin{equation}
\begin{aligned}
M_{-}^{(3)}(\lambda)^{-1} M_{+}^{(3)}(\lambda) &=M_{RHP-}^{(2)}(\lambda)M_{-}^{(2)}(\lambda)^{-1}  M_{+}^{(2)}(\lambda)M_{RHP+}^{(2)}(\lambda)^{-1} \\
&=M_{RHP-}^{(2)}(\lambda)J^{(2)}(\lambda)^{-1}M_{RHP+}^{(2)}(\lambda)^{-1}=I,
\end{aligned}
\end{equation}
which means $M^{(3)}(\lambda)$ has no jump. Additionally, we can prove that $M^{(3)}(\lambda)$ has removable singularities for $\lambda \in \Lambda\cup\bar{\Lambda},$ the method is similar as it in \cite{yang2020}. Then combine (\ref{par}) we have
\begin{equation}
\begin{aligned}
\bar{\partial}M^{(3)}&=\bar{\partial}M^{(2)}(\mu_{RHP}^{(2)})^{-1}=M^{(2)} \bar{\partial} \mathcal{R}^{(2)}(M_{RHP}^{(2)})^{-1}\\[6pt]
&=M^{(3)}[M_{RHP}^{(2)}\bar{\partial}\mathcal{R}^{(2)}(M_{RHP}^{(2)})^{-1}].
\end{aligned}
\end{equation}
\end{proof}
The pure $\bar{\partial}$ problem will be analyzed in Section 10.
\section{The solution $M_{out}^{(2)}(\lambda)$ of outer model RHP}
\hspace*{\parindent}
In this section, our aim is to establish a outer model RHP to solve $M_{out}^{(2)}(\lambda)$ and analyze the long-time behavior of soliton solutions of the initial value problem, (\ref{relation5}) indicates that we need to study the property of $M_{out}^{(2)}(\lambda)$ as $\lambda\rightarrow0$. According to Proposition \ref{prop}, we can ignore the jump at $\Sigma^{(2)}$ when $\lambda\notin U_{\pm\lambda_{0}}$. Thus we build the following outer model problem.\\[6pt]
\textbf{RHP 7}\quad Find a matrix function $M_{out}^{(2)}(y, t, \lambda)$ with the following properties
\begin{enumerate}
    \item Analyticity: $M_{out}^{(2)}(y, t, \lambda)$ is analytic in $\mathbb{C} \backslash (\Sigma^{(2)}\cup \Lambda \cup \bar{\Lambda}).$
    \item Normalization: $M_{out}^{(2)}(y, t, \lambda) \rightarrow I$, as $\lambda \rightarrow \infty $.
    \item Residues: At $\lambda=\lambda_{n}$ and $\lambda=\overline{\lambda}_{n}$, $M_{out}^{(2)}(y, t, \lambda)$ has simple poles and satisfy the same residue conditions(\ref{res3})-(\ref{res4}) with $M_{RHP}^{(2)}$.
\end{enumerate} 

Now we need to prove the existence and uniqueness of solution of the above RHP7, the idea is to discuss the reflectionless case of the RHP1 at first, and then achieve our target by replacing the scattering data. Here is the reflectionless RH problem degenerated from RHP2.\\[6pt]
\textbf{RHP 8}\quad Let $\sigma_{d}=\left\{\left(\lambda_{n}, c_{n}\right)\right\}_{n=1}^{N}$ represent the discrete data. Find a matrix function $m(\lambda;y, t|\sigma_{d})$ with the following properties
\begin{enumerate}
    \item Analyticity: $m(\lambda;y, t|\sigma_{d})$ is analytic in $\mathbb{C} \backslash (\Lambda \cup \bar{\Lambda}).$
    \item Normalization: $m(\lambda;y, t|\sigma_{d})\rightarrow I$, as $\lambda \rightarrow \infty $.
    \item Residues: At $\lambda=\lambda_{n}$ and $\lambda=\overline{\lambda}_{n}$, $m(\lambda;y, t|\sigma_{d})$ has simple poles and the residues satisfy the conditions 
    \begin{equation}
\begin{array}{l}
\mathop{\operatorname{Res}}\limits_{\lambda=\lambda_{n}} m(\lambda;y, t|\sigma_{d})=\lim\limits _{\lambda \rightarrow \lambda_{n}} m(\lambda;y, t|\sigma_{d})K_n, \\[12pt]
\mathop{\operatorname{Res}}\limits_{\lambda=\overline{\lambda}_{n}} m(\lambda;y, t|\sigma_{d})=\lim\limits_{\lambda \rightarrow \overline{\lambda}_{n}} m(\lambda;y, t|\sigma_{d})(-\sigma_{2} \overline{K_n} \sigma_{2}).
\end{array}
\end{equation}
where $K_n = \begin{pmatrix}
0 & 0 \\[6pt]
\kappa_n & 0
\end{pmatrix}, \kappa_n= c_{n}e^{2i\theta(\lambda_n)}$.
\end{enumerate} 
\begin{proposition}
The RHP8 exists an unique solution.\label{p8}
\end{proposition}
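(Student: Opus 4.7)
The plan is to reduce the reflectionless RHP~8 to a finite-dimensional linear algebra problem and establish existence and uniqueness via Liouville-type arguments. Because the residue matrices $K_n$ and $-\sigma_2\overline{K_n}\sigma_2$ each have rank one, any meromorphic solution with the prescribed normalization must take the rational form
\begin{equation}
m(\lambda;y,t\,|\,\sigma_d)=I+\sum_{n=1}^{N}\left[\frac{A_n}{\lambda-\lambda_n}+\frac{B_n}{\lambda-\bar{\lambda}_n}\right],
\end{equation}
where $A_n$ has only its first column nontrivial and $B_n$ only its second. Substituting this ansatz into the residue conditions and evaluating at the poles yields a closed $4N\times 4N$ linear algebraic system of Cauchy type for the unknown entries of $A_n$ and $B_n$.

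For uniqueness, I would first verify $\det m\equiv 1$: the rank-one structure of the residue matrices makes $\det m$ extend analytically across each $\lambda_n$ and $\bar{\lambda}_n$, so $\det m$ is entire, and the normalization combined with Liouville's theorem forces $\det m\equiv 1$; in particular $m$ is everywhere invertible. Given two solutions $m_1,m_2$, the quotient $f=m_1 m_2^{-1}$ is analytic on $\mathbb{C}\setminus(\Lambda\cup\bar{\Lambda})$, and a direct computation using the shared residue data shows $f$ extends across each pole. Since $f\to I$ at infinity, Liouville gives $m_1\equiv m_2$.

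For existence, the question reduces to invertibility of the coefficient matrix, which I would establish through a vanishing lemma. A nontrivial null vector would produce a nonzero rational $m_0$ satisfying all conditions of RHP~8 except with the relaxed normalization $m_0\to 0$ at infinity. Exploiting the Schwarz-reflection symmetry between the paired poles $(\lambda_n,\bar{\lambda}_n)$ with conjugate residue constants $(\kappa_n,\bar{\kappa}_n)$, one integrates a sesquilinear combination built from $m_0(\lambda)$ and $m_0(\bar{\lambda})^{\dagger}$ around a large circle; the residue sum then collapses to a Hermitian quadratic form weighted by positive factors involving $|\kappa_n|^2/\operatorname{Im}(\lambda_n)$, and positive-definiteness forces $m_0\equiv 0$, a contradiction.

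The main obstacle is the positivity step in the vanishing lemma: extracting a manifestly positive Hermitian form from the residue sum relies on the paired-pole symmetry built into Assumption~\ref{assump} together with $\operatorname{Im}(\lambda_n)>0$. Once this invertibility is in hand, the unique coefficients $A_n,B_n$ obtained by solving the linear system furnish the explicit reflectionless multi-soliton solution of RHP~8 and close the proof.
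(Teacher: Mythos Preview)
Your proposal is correct and aligns with the paper's approach. The paper's own proof is a two-line deferral: uniqueness ``follows from the Liouville's theorem'' (exactly your quotient argument $m_1 m_2^{-1}\to I$), and existence is sent to the reference \cite{yang2020} without further detail. Your rational-ansatz reduction and vanishing-lemma strategy is precisely the standard mechanism that reference employs, so you are spelling out what the paper leaves implicit rather than taking a different route. The only place to be careful when you write it up is the positivity step: you should make the Schwarz-reflection pairing explicit by checking that the symmetry $-\sigma_2\overline{K_n}\sigma_2$ at $\bar{\lambda}_n$ indeed produces a Hermitian form with the correct sign, since the sign of that quadratic form is what distinguishes solvable reflectionless problems from those with spectral singularities.
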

\begin{proof}
The existence can be proved in a similar way with \cite{yang2020}, and the uniqueness of solution follows from the Liouville’s theorem.
\end{proof}
The transmission coefficient satisfies trace formula $$a(\lambda)=\prod_{n=1}^{N} \frac{\lambda-\lambda_{n}}{\lambda-\bar{\lambda}_{n}}$$ under the reflectionless condition. Let $\triangle \subseteq\{1,2, \ldots, N\}$ and $\bigtriangledown= \{1,2, \cdots, N\} \backslash \triangle$. Define
\begin{equation}
a_{\triangle}(\lambda)=\prod_{n \in \triangle} \frac{\lambda-\lambda_{n}}{\lambda-\bar{\lambda}_{n}},\quad
a_{\bigtriangledown}(\lambda)=\frac{a(\lambda)}{a_{\triangle}(\lambda)}=\prod_{n \in \bigtriangledown}\frac{\lambda-\lambda_{n}}{\lambda-\bar{\lambda}_{n}}.
\end{equation}
For $\hat{M}(y, t, \lambda)$ defined by (\ref{my}), introduce a transformation
\begin{equation}
  M^{\triangle}(\lambda; y, t|\sigma^{\triangle})= \hat{M}(\lambda; y, t|\sigma_{d})a_{\triangle}(\lambda)^{\sigma_3}.\label{my2}
\end{equation}
Correspondingly, we define
\begin{equation}
    \mu^{\triangle}(\lambda; y, t|\sigma^{\triangle})=(1, 1)M^{\triangle}(\lambda; y, t|\sigma^{\triangle}).
\end{equation}
It can be directly calculated that 
\begin{equation}
\sigma^{\triangle}=\left\{\left(\lambda_{n}, c^{\prime}_{n}\right), \lambda_{n} \in \Lambda\right\}_{n=1}^{N}, \quad c^{\prime}_{n}=\left\{\begin{array}{l}
c_{n}^{-1} a_{\triangle}^{\prime}\left(\lambda_{n}\right)^{-2}, \quad n \in \triangle, \\[6pt]
c_{n} a_{\triangle}\left(\lambda_{n}\right)^{2}, \quad n \in \bigtriangledown.
\end{array}\right.\label{disd}
\end{equation}
and $M^{\triangle}(\lambda; y, t|\sigma^{\triangle})$ satisfies the modified RH problem.\\[6pt]
\textbf{RHP 9}\quad Given the discrete data shown as (\ref{disd}). Find a matrix function $M^{\triangle}(\lambda; y, t|\sigma^{\triangle})$ with the following properties
\begin{enumerate}
    \item Analyticity: $M^{\triangle}(\lambda; y, t|\sigma^{\triangle})$ is analytic in $\mathbb{C} \backslash (\Lambda \cup \bar{\Lambda}).$
    \item Normalization: $M^{\triangle}(\lambda; y, t|\sigma^{\triangle})\rightarrow I$, as $\lambda \rightarrow \infty $.
    \item Residues: At $\lambda=\lambda_{n}$ and $\lambda=\overline{\lambda}_{n}$, $M^{\triangle}(\lambda; y, t|\sigma^{\triangle})$ has simple poles and the residues satisfy the conditions 
    \begin{equation}
\begin{array}{l}
\mathop{\operatorname{Res}}\limits_{\lambda=\lambda_{n}} M^{\triangle}(\lambda; y, t|\sigma^{\triangle})=\lim\limits _{\lambda \rightarrow \lambda_{n}} M^{\triangle}(\lambda; y, t|\sigma^{\triangle})K_n^\triangle, \\[12pt]
\mathop{\operatorname{Res}}\limits_{\lambda=\overline{\lambda}_{n}} M^{\triangle}(\lambda; y, t|\sigma^{\triangle})=\lim\limits_{\lambda \rightarrow \overline{\lambda}_{n}} M^{\triangle}(\lambda; y, t|\sigma^{\triangle})(-\sigma_{2} \overline{K_n^\triangle} \sigma_{2}).
\end{array}
\end{equation}
where 
\begin{equation}
 K_n^\triangle = \left\{\begin{array}{ll}
\begin{pmatrix}
0 & \kappa_n^\triangle \\[6pt]
0 & 0
\end{pmatrix},& n\in \triangle,\\[15pt]
\begin{pmatrix}
0 & 0 \\[6pt]
\kappa_n^\triangle & 0
\end{pmatrix},& n\in \bigtriangledown,
\end{array}\right. 
\quad \kappa_n^\triangle=\left\{\begin{array}{ll}
c_{n}^{-1} a_{\triangle}^{\prime}\left(\lambda_{n}\right)^{-2} e^{-2 i t \theta\left(\lambda_{n}\right)},& n \in \triangle, \\[6pt]
c_{n} a_{\triangle}\left(\lambda_{n}\right)^{2} e^{2 i t \theta\left(\lambda_{n}\right)},& n\in \bigtriangledown.
\end{array}\right.
\end{equation}
\end{enumerate} 
It is easy to get the existence and uniqueness of the solution of RHP 9 which inherited from Proposition \ref{p8}.

Let $\triangle = \Delta_{\lambda_{0}}^{-}$ and replace the discrete data $\sigma^\triangle$ with
\begin{equation}
    \sigma^\triangle_{out}=\left\{\left(\lambda_{n}, \tilde{c}_{n}\right), \lambda_{n} \in \Lambda\right\}_{n=1}^{N},\quad \tilde{c}_{n}=\left\{\begin{array}{ll}
c^{\prime}_{n}\delta(\lambda_n)^2, & n \in \Delta_{\lambda_{0}}^{-}, \\[6pt]
c^{\prime}_{n}\delta(\lambda_n)^{-2}, & n \in \Delta_{\lambda_{0}}^{+}.
\end{array}\right.\label{disd2}
\end{equation}
Then we have the following proposition holds.
\begin{proposition}
There exists an unique solution for RHP 7 and 
\begin{equation}
  M_{out}^{(2)}(y, t, \lambda) = M^{\Delta_{\lambda_{0}}^{-}}(\lambda; y, t|\sigma^\triangle_{out}).
\end{equation}
Moreover,
\begin{equation}
  q(x,t;\sigma^\triangle_{out})=\hat{q}_{sol}(y, t ;\sigma^\triangle_{out})=\left((\mu_{out}^{(2)})_{1} (\mu_{out}^{(2)})_{2}\right)^{2}(y, t, 0)-1, 
\end{equation}
where 
\begin{equation}
    ((\mu^{(2)}_{out})_1,(\mu^{(2)}_{out})_2)(y,t,\lambda)=(1, 1)M^{(2)}_{out}(y,t,\lambda),
\end{equation}
and $q(x,t;\sigma^\triangle_{out})$ represents the $N$-soliton solution of (\ref{HDe1}) with the discrete data $\sigma^\triangle_{out}$.
\end{proposition}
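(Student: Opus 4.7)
The plan is to reduce the proposition to the existence/uniqueness for RHP 8--9 (Proposition \ref{p8}) by matching the residue conditions of RHP 7 with those of RHP 9 under a carefully chosen discrete data set. The key observation is that the function $T(\lambda)$ defined in \eqref{T} admits the explicit factorization
\begin{equation*}
T(\lambda) = a_{\Delta_{\lambda_0}^-}(\lambda)^{-1}\,\delta(\lambda),
\end{equation*}
where $\delta(\lambda)$ is nonvanishing and analytic at each $\lambda_n$, so the pole/zero structure of $T$ at the spectral points is entirely governed by $a_{\Delta_{\lambda_0}^-}$.

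Using this factorization, I would evaluate $T(\lambda_n)^{-2} = a_{\Delta_{\lambda_0}^-}(\lambda_n)^{2}\delta(\lambda_n)^{-2}$ for $n\in\Delta_{\lambda_0}^+$ (where $T$ is analytic) and $(1/T)'(\lambda_n)^{-2} = a'_{\Delta_{\lambda_0}^-}(\lambda_n)^{-2}\delta(\lambda_n)^{2}$ for $n\in\Delta_{\lambda_0}^-$ (where $T$ has a simple pole). Substituting these into the residue prescriptions \eqref{res3}--\eqref{res4} for $M^{(2)}_{out}$ (inherited from $M^{(2)}_{RHP}$) and comparing with the definitions \eqref{disd}--\eqref{disd2} of $\tilde c_n$, both sets of coefficients reduce to the common form $\tilde c_n\,e^{\pm 2it\theta_n}$. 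Since RHP 7 and RHP 9 (with $\triangle = \Delta_{\lambda_0}^-$ and data $\sigma^\triangle_{out}$) then impose identical analyticity, normalization, and residue conditions on a function analytic on $\mathbb{C}\setminus(\Lambda\cup\bar\Lambda)$, existence and uniqueness transfer directly from Proposition \ref{p8}, yielding
\begin{equation*}
M^{(2)}_{out}(y,t,\lambda) = M^{\Delta_{\lambda_0}^-}(\lambda;y,t|\sigma^\triangle_{out}).
\end{equation*}

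For the reconstruction formula, I would specialize the transformation chain \eqref{trans1}--\eqref{trans2} to the reflectionless limit encoded in $\sigma^\triangle_{out}$. Here $r\equiv 0$ forces $\mathcal{R}^{(2)}\equiv I$ and removes every jump on $\mathbb{R}$, so the outer model captures $M^{(2)}$ exactly (no error correction from $E$) and one has $M^{(2)}_{out} = \hat M\,T^{-\sigma_3}$ on the full plane. A direct computation at $\lambda = 0$ gives $(M^{(1)}_{11}+M^{(1)}_{21})(M^{(1)}_{12}+M^{(1)}_{22}) = T^{-1}\hat\mu_1\cdot T\hat\mu_2 = \hat\mu_1\hat\mu_2$, so the scalar factor $T(0)^{\pm 1}$ cancels in the relevant product, and the reconstruction $\hat q = (\hat\mu_1\hat\mu_2)^2(y,t,0)-1$ from \eqref{relation5} translates directly into the stated formula in terms of $(\mu^{(2)}_{out})_1$ and $(\mu^{(2)}_{out})_2$.

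The main obstacle is the residue bookkeeping in the reduction step: at each $\lambda_n$ the behavior of $T$ flips between regular and singular depending on whether $n\in\Delta_{\lambda_0}^+$ or $n\in\Delta_{\lambda_0}^-$, so the conversion into $\tilde c_n$ must be verified coefficient-by-coefficient (together with the conjugate residues at $\bar\lambda_n$, which are fixed by the Schwarz-type symmetry of the RHP). Once these identifications are confirmed, the rest of the proposition assembles immediately from Proposition \ref{p8} and the transformation identities already established in Sections 3--5.
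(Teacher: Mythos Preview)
Your proposal is correct and follows essentially the same route the paper leaves implicit: the paper states the proposition without proof, relying on the preceding machinery (RHP~8, the conjugation \eqref{my2} giving RHP~9, and the data replacement \eqref{disd2}), and you supply precisely the residue bookkeeping via the factorization $T=a_{\Delta_{\lambda_0}^-}^{-1}\delta$ that makes the identification $M^{(2)}_{out}=M^{\Delta_{\lambda_0}^-}(\cdot\,|\,\sigma^\triangle_{out})$ explicit. Your observation that the diagonal conjugation cancels in the product $(\mu^{(2)}_{out})_1(\mu^{(2)}_{out})_2$ is exactly what is needed for the reconstruction statement; this is the same mechanism behind \eqref{relation6} (note that the $T^{-2}(0)$ factor printed there appears to be a typo, as your computation shows the factors cancel).

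One small wording issue: in your reconstruction paragraph you write that ``$r\equiv 0$ forces $\mathcal{R}^{(2)}\equiv I$'' in the limit encoded by $\sigma^\triangle_{out}$. This is slightly misleading, since the modified norming constants $\tilde c_n$ themselves carry the $\delta$-factor built from the \emph{original} nonzero $r$. The cleaner way to phrase the argument---and the one you in fact carry out---is purely algebraic: undo the conjugation via $m:=M^{(2)}_{out}\,a_{\Delta_{\lambda_0}^-}^{-\sigma_3}$, observe that $m$ solves RHP~8 for some reflectionless data, and note that the diagonal factor $a_{\Delta_{\lambda_0}^-}^{\pm 1}$ drops out of the product $\mu_1\mu_2$ at $\lambda=0$. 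No appeal to $\mathcal{R}^{(2)}$ is needed at this stage.
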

Thus, for each $\triangle \subseteq\{1,2, \ldots, N\}$, there exists a solution $q(x,t;\sigma^\triangle_{out})$ of (\ref{HDe1}). For the purpose of studying the asymptotic behavior of the solution, we need to choose appropriate $\triangle$ which are well controlled as $|t|\rightarrow\infty$.

Considering the long-time behavior of $M^{\Delta_{\lambda_{0}}^{-}}(y, t, \lambda|\sigma^\triangle)$, First, we know that the one-soliton solution of (\ref{HDe1}) which has an implicit expression \cite{wm1980}
\begin{equation}
\begin{array}{c}
q(x, t)=\tanh ^{-4}\left(\kappa x-4 \kappa^{3} t+\kappa x_{0}+\varepsilon_{+}\right)-1, \\[6pt]
\varepsilon_{+}=\frac{1}{\kappa}\left[1+\tanh \left(\kappa x-4 \kappa^{3} t+\kappa x_{0}+\varepsilon_{+}\right)\right],
\end{array}
\end{equation}
where $\kappa<0$. Then define the following functions and notation which will be used later
\begin{equation}
\begin{aligned}
&I=\left\{\lambda:\frac{v_{1}}{4}<|\lambda|^{2}<\frac{ v_{2}}{4}\right\},\quad v_{1} \leq v_{2} \in \mathbb{R}^{+}, \\
&\Lambda(I)=\left\{\lambda_{n} \in \Lambda: \lambda_{n} \in I\right\}, \quad N(I)=|\Lambda(I)|, \\
&\Lambda^{-}(I)=\left\{\lambda_{n} \in \Lambda:|\lambda_n|^{2}>\frac{v_{1}}{4 }\right\}, \quad \Lambda^{+}(I)=\left\{\lambda_{n} \in \Lambda:|\lambda_n|^{2}<\frac{v_{2}}{4}\right\}, \\
&c_{n}(I)=c_{n} \mathop{\prod}\limits_{\operatorname{Re} \lambda_{k} \in I_{-} \backslash I}\left(\frac{\lambda_{n}-\lambda_{k}}{\lambda_{n}-\bar{\lambda}_{k}}\right)^{2} \exp \left[-\frac{1}{\pi i} \int_{I_{-}} \frac{\eta(s)}{s-\lambda} d s\right].
\end{aligned}
\end{equation}
And give pair points $y_1\leq y_2\in\mathbb{R},$ $v_1,v_2$ represent velocities, we introduce a cone
\begin{equation}
   C\left(y_{1}, y_{2}, v_{1}, v_{2}\right)=\left\{(y, t) \in R^{2} \mid y=y_{0}+v t, y_{0} \in\left[y_{1}, y_{2}\right], v \in\left[v_{1}, v_{2}\right]\right\}.\label{cone}
\end{equation}
Then we give two figures to show the discrete spectrum distribution and the space-time cone, see Figure \ref{f4}, \ref{f5}.
\begin{figure}[H]
 \centering
 \begin{minipage}[t]{0.35\textwidth}
  \begin{tikzpicture}[node distance=2cm]
  \fill[fill=blue!15,fill opacity=0.5] circle [radius=2];
  \fill[fill=white] circle [radius=1];
  \draw[thick ] circle [radius=1];
  \draw[thick ] circle [radius=2];
  \draw[thick] (-2.5, 0) -- (2.5, 0);
  \draw[-latex] (0,0)--(2.5, 0);
  \draw [fill] (1.3,1) circle [radius=0.05] node[left]{$\lambda_1$};
  \draw [fill] (1.3,-1) circle [radius=0.05]node[left]{$\bar{\lambda}_1$};
  \draw [fill] (-2.3,1.5) circle [radius=0.05] node[right]{$\lambda_2$};
  \draw [fill] (-2.3,-1.5) circle [radius=0.05] node[right]{$\bar{\lambda}_2$};
  \draw [fill] (-1.1,0.8) circle [radius=0.05]node[left]{$\lambda_3$};
  \draw [fill] (-1.1,-0.8) circle [radius=0.05]node[left]{$\bar{\lambda}_3$};
  \draw [fill] (0,2.3) circle [radius=0.05]node[right]{$\lambda_4$};
  \draw [fill] (0,-2.3) circle [radius=0.05]node[right]{$\bar{\lambda}_4$};
  \draw [fill] (1.6,1.7) circle [radius=0.05]node[right]{$\lambda_5$};
  \draw [fill] (1.6,-1.7) circle [radius=0.05]node[right]{$\bar{\lambda}_5$};
  \draw [fill] (1,0) circle [radius=0.05];
  \draw [fill] (2,0) circle [radius=0.05];
  \node at (2.7, 0.2) {$Re\lambda$};
  \node at (0.8,-0.3) {$\sqrt{v_1} / 2$};
  \node at (1.8,-0.3) {$\sqrt{v_2} / 2$};
  \node at (-0.5,1.4) {$I$};
  \end{tikzpicture}
\caption{Five pairs of discrete spectrum and $\Lambda(I)=\left\{\lambda_1,\lambda_3\right\}$.}
\label{f4}
\end{minipage}
\qquad
\begin{minipage}[t]{0.45\textwidth}
\begin{tikzpicture}[node distance=2cm]
  \fill[fill=blue!15,fill opacity=0.5] 
  (-1,0)--(-1.8, 2.5)--(0.9,2.5)--(1,0)--(1.8,-2.5)--(-0.9,-2.5);
  \draw[thick] (-2.5, 0) -- (2.5, 0);
  \draw[-latex] (0,0)--(2.5, 0);
  \draw[thick] (-1,0)--(-1.8, 2.5) node[left]{$y=y_1+v_2t$};
  \draw[thick] (-1,0)--(-0.9, -2.5)node[left]{$y=y_1+v_1t$};
  \draw[thick] (1,0)--(0.9, 2.5) node[right]{$y=y_2+v_1t$};
  \draw[thick] (1,0)--(1.8, -2.5)node[right]{$y=y_2+v_2t$};
  \node at (2.5, -0.3) {$y$};
  \node at (-1.2,-0.3) {$y_1$};
  \node at (1.35,-0.3) {$y_2$};
  \node at (-0.2,1.4) {$C$};
  \end{tikzpicture}
\caption{The cone $C\left(y_{1}, y_{2}, v_{1}, v_{2}\right)$.}
\label{f5}
\end{minipage}
\end{figure}

Our idea is to use the soliton solution corresponding to the discrete spectrum falling in the cone to approximate the solution  $M^{\Delta_{\lambda_{0}}^{-}}(\lambda; y, t|\sigma^\triangle)$. It yields the following proposition.
\begin{proposition}
Given discrete scattering data $\sigma^{\triangle}=\left\{\left(\lambda_{n}, c^{\prime}_{n}\right), \lambda_{n} \in \Lambda\right\}_{n=1}^{N}$, and $\sigma^{\triangle}_I=\left\{\left(\lambda_{n}, c^{\prime}_{n}\right), \lambda_{n} \in \Lambda(I)\right\}$. For $(y, t) \in C\left(y_{1}, y_{2}, v_{1}, v_{2}\right)$, as $|t|\rightarrow\infty$, 
\begin{equation}
   M^{\Delta_{\lambda_{0}}^{-}}(\lambda; y, t|\sigma^\triangle)= \left(I+\mathcal{O}\left(e^{-2 \varepsilon(I)|t|}\right)\right)M^{\Delta_{\lambda_{0}}^{-}}(\lambda; y, t|\sigma^\triangle_I),
\end{equation}
where $$\varepsilon(I)=\min_{\lambda_{n} \in \Lambda \backslash \Lambda(I)}\left\{\operatorname{Im}\left(\lambda_{n}\right) \frac{v_{1}}{|\lambda|^{2}}\left(|\lambda|+\frac{\sqrt{v_{2}}}{2}\right) \operatorname{dist}\left(\lambda_{n}, I\right)\right\}.$$
\end{proposition}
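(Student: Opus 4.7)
The plan is to treat $M^{\Delta_{\lambda_0}^-}(\lambda;y,t|\sigma^{\triangle}_I)$ as a soliton model and $M^{\Delta_{\lambda_0}^-}(\lambda;y,t|\sigma^{\triangle})$ as the full discrete problem, the difference between the two being localized at the ``invisible'' poles $\{\lambda_n,\bar\lambda_n:\lambda_n\in\Lambda\setminus\Lambda(I)\}$ whose associated exponentials are exponentially small inside the cone. Accordingly, I would introduce the error
\begin{equation}
E(\lambda)=M^{\Delta_{\lambda_0}^-}(\lambda;y,t|\sigma^{\triangle})\bigl[M^{\Delta_{\lambda_0}^-}(\lambda;y,t|\sigma^{\triangle}_I)\bigr]^{-1}.
\end{equation}
Because both RHPs are reflectionless and both are normalized to $I$ at infinity, $E$ has no jumps and $E(\lambda)\to I$ as $\lambda\to\infty$. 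A direct residue calculation at each $\lambda_n\in\Lambda(I)$ shows that the singularities of the two factors cancel there, so $E$ is a rational matrix-valued function with possible poles only at $\{\lambda_n,\bar\lambda_n:\lambda_n\in\Lambda\setminus\Lambda(I)\}$.

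The next step is to quantify the residues of $E$ at these remaining poles. Since the discrete data $\sigma^{\triangle}$ from RHP 9 carry the time dependence exclusively through the factor $e^{\pm2it\theta(\lambda_n)}$ in $\kappa_n^{\triangle}$, the residue of $E-I$ at $\lambda_n$ (and similarly at $\bar\lambda_n$) is controlled by $|e^{2it\theta(\lambda_n)}|=e^{-2t\,\mathrm{Im}\,\theta(\lambda_n)}$ multiplied by data that are uniformly bounded on the cone. I would then compute
\begin{equation}
\mathrm{Im}\,\theta(\lambda_n)=\mathrm{Im}(\lambda_n)\!\left(\tfrac{y}{t}+12(\mathrm{Re}\,\lambda_n)^{2}-4(\mathrm{Im}\,\lambda_n)^{2}\right),
\end{equation}
and translate the geometric condition $\lambda_n\notin\Lambda(I)$ into the lower bound $|t|\,\mathrm{Im}\,\theta(\lambda_n)\geq\varepsilon(I)\,|t|$ for every $(y,t)\in C(y_1,y_2,v_1,v_2)$; the factor $\mathrm{dist}(\lambda_n,I)$ in $\varepsilon(I)$ comes out naturally because it measures how far $|\lambda_n|^{2}$ stands from the range of the stationary-phase value $\lambda_0^{2}$ swept out by the cone, and the geometric prefactor $(v_1/|\lambda_n|^{2})(|\lambda_n|+\sqrt{v_2}/2)$ records the Lipschitz constant of $\mathrm{Im}\,\theta$ in this direction.

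With all the residues of $E-I$ now bounded by $\mathcal{O}(e^{-2\varepsilon(I)|t|})$, the partial-fraction representation
\begin{equation}
E(\lambda)=I+\sum_{\lambda_n\in\Lambda\setminus\Lambda(I)}\left(\frac{\mathrm{Res}_{\lambda=\lambda_n}E}{\lambda-\lambda_n}+\frac{\mathrm{Res}_{\lambda=\bar\lambda_n}E}{\lambda-\bar\lambda_n}\right)
\end{equation}
together with the separation estimate $\mathrm{dist}(\Lambda,\mathbb{R})\geq d$ from \eqref{dist} yield $\|E(\lambda)-I\|=\mathcal{O}(e^{-2\varepsilon(I)|t|})$ uniformly on any set bounded away from $\Lambda\cup\bar\Lambda$, which is precisely the claimed factorization after right-multiplying by $M^{\Delta_{\lambda_0}^-}(\lambda;y,t|\sigma^{\triangle}_I)$. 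The main obstacle is step two: one has to argue carefully that the minimum defining $\varepsilon(I)$ is attained uniformly over the entire cone and not merely in the limit $y_0=0$, which forces a simultaneous use of both the lower bound on $v$ (to keep $\lambda_0$ bounded away from zero) and the uniform boundedness of $y_0\in[y_1,y_2]$ (to absorb the $y_0/t$ correction to $y/t$ into a lower-order term).
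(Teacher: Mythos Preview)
Your approach is correct and is essentially the standard argument that the paper defers to when it writes ``It can be proved in a similar way \cite{yang2020}.'' The paper gives no proof of its own beyond this citation, so there is nothing substantive to compare against line by line.

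That said, the cited references typically organize the argument slightly differently from what you outline. Rather than forming the quotient $E(\lambda)$ and reading off a partial-fraction expansion directly, they first trade each pole $\lambda_n\in\Lambda\setminus\Lambda(I)$ for a jump on a small circle $\partial D(\lambda_n,\rho)$ via the local interpolant $I+K_n^{\triangle}/(\lambda-\lambda_n)$; the jump matrix on these circles is then $I+\mathcal{O}(e^{-2\varepsilon(I)|t|})$, and the conclusion follows from small-norm Riemann--Hilbert theory. Your route---observing that $E$ is rational, that the residues at $\Lambda(I)$ cancel because both factors satisfy identical nilpotent residue conditions there, and that the remaining residues are exponentially small---is a legitimate shortcut that avoids the Cauchy-operator machinery, at the cost of needing an a priori uniform bound on $M^{\Delta_{\lambda_0}^-}(\lambda;y,t|\sigma^{\triangle}_I)$ and its inverse near the excised poles. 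Either packaging yields the same estimate; your version is arguably cleaner for a purely discrete problem like this one.

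Your identification of the main obstacle---the uniformity of the lower bound $\varepsilon(I)$ over the full cone rather than along a single ray---is also the right place to focus, and your plan for handling it (separating the $y_0/t$ contribution and using both endpoints $v_1,v_2$) is the standard move.
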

\begin{proof}
It can be proved in a similar way \cite{yang2020}.
\end{proof}

Then we can approximate the unique solution $M_{out}^{(2)}(y, t, \lambda)$ of RHP 7. The conclusion holds as follows.
\begin{proposition}
The RHP 7 exists an unique solution $M_{out}^{(2)}(y, t, \lambda)$ and define
\begin{equation}
   \left([\mu_{out}^{(2)}]_{1},[\mu_{out}^{(2)}]_{2}\right)(y, t, \lambda) =(1,1) M_{out}^{(2)}(y, t, \lambda),
\end{equation}
which satisfy
\begin{align}
M_{out}^{(2)}(y, t, \lambda) &= M^{\Delta_{\lambda_{0}}^{-}}(\lambda; y, t|\sigma^\triangle_{out})\\
&=M^{\Delta_{\lambda_{0}}^{-}}(\lambda; y, t|\sigma^\triangle_I)\mathop{\prod}\limits_{\operatorname{Re} \lambda_{k} \in I_{-} \backslash I}\left(\frac{\lambda-\lambda_{n}}{\lambda-\bar{\lambda}_{n}}\right)^{2}\delta^{-\sigma_3}+\mathcal{O}\left(e^{- \varepsilon(I)|t|}\right),
\end{align}
where $\sigma^\triangle_{out}=\left\{\lambda_{n}, \tilde{c}_{n}(\lambda_0)\right\}_{n=1}^{N}$ with
$$\tilde{c}_{n}(\lambda)=c_n\exp\left[-\frac{1}{\pi i} \int_{I_{-}} \frac{\eta(s)}{s-\lambda} d s\right].$$
Moreover, we have
\begin{align}
    q(x,t;\sigma^\triangle_{out})=\hat{q}_{sol}(y, t ;\sigma^\triangle_{out})&=\left((\mu_{out}^{(2)})_{1} (\mu_{out}^{(2)})_{2}\right)^{2}(y, t, 0)-1 \\
  &=q(x,t;\sigma^\triangle_I)+\mathcal{O}\left(e^{- \varepsilon(I)|t|}\right),
\end{align}
where $ q(x,t;\sigma^\triangle_{out})$ represents the $N$-soliton solution of (\ref{HDe1}) corresponding to the discrete scattering data $\sigma^\triangle_{out}$.
\end{proposition}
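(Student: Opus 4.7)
The plan is to first obtain existence and uniqueness of $M_{out}^{(2)}$ by matching it to the reflectionless problem RHP 9 for an appropriate choice of $\triangle$ and modified norming constants, and then to reduce this $N$-soliton object to the $N(I)$-soliton carried by the cone by invoking the preceding localization proposition.

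First I would verify the identity $M_{out}^{(2)}(y,t,\lambda) = M^{\Delta_{\lambda_0}^-}(\lambda;y,t\mid\sigma^\triangle_{out})$ by direct check of the defining data of RHP 7. Taking $\triangle = \Delta_{\lambda_0}^-$ in RHP 9 produces an object analytic on $\mathbb{C}\setminus(\Lambda\cup\bar\Lambda)$, normalized to $I$ at infinity, with residue structure matching that inherited from $M^{(1)}$ provided the norming constants $\tilde c_n$ absorb the $T(\lambda)$ factor. Recalling that $T(\lambda)=\prod_{n\in\Delta_{\lambda_0}^-}\frac{\lambda-\bar\lambda_n}{\lambda-\lambda_n}\,\delta(\lambda) = a_{\Delta_{\lambda_0}^-}(\lambda)^{-1}\delta(\lambda)$, the $a_\triangle^{\sigma_3}$ conjugation in \eqref{my2} swallows exactly the rational prefactor, leaving $\delta(\lambda_n)^{\pm 2}$ as the only residual modification in $\tilde c_n$ at the poles in $\Delta^\pm_{\lambda_0}$; this gives precisely \eqref{disd2}. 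Existence and uniqueness of the resulting $M^{\Delta_{\lambda_0}^-}(\lambda;y,t\mid\sigma^\triangle_{out})$ are then inherited from Proposition \ref{p8} (the Vandermonde system for the residues is nondegenerate and a Liouville argument gives uniqueness).

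Next I would apply the preceding proposition with $\triangle = \Delta_{\lambda_0}^-$ and the discrete data $\sigma^\triangle_{out}$, obtaining
\[
M^{\Delta_{\lambda_0}^-}(\lambda;y,t\mid\sigma^\triangle_{out}) = \bigl(I+\mathcal{O}(e^{-2\varepsilon(I)|t|})\bigr)\,M^{\Delta_{\lambda_0}^-}(\lambda;y,t\mid\sigma^\triangle_{out,I})
\]
uniformly for $(y,t)$ in the cone $C(y_1,y_2,v_1,v_2)$, where $\sigma^\triangle_{out,I}$ keeps only the poles $\lambda_n$ with $\lambda_n\in\Lambda(I)$. To pass from $\sigma^\triangle_{out,I}$ to $\sigma^\triangle_I$ I would peel off the contributions of the poles lying in $I_-\setminus I$ and the continuous-spectrum correction hidden inside $\delta(\lambda_n)^{\pm 2}$: splitting
\[
\delta(\lambda_n)^{\pm 2}=\Bigl(\prod_{\mathrm{Re}\,\lambda_k\in I_-\setminus I}\tfrac{\lambda_n-\lambda_k}{\lambda_n-\bar\lambda_k}\Bigr)^{\mp 2}\exp\!\Bigl[\mp\tfrac{1}{\pi i}\!\int_{I_-}\!\tfrac{\eta(s)}{s-\lambda_n}\,ds\Bigr]\cdot(\text{analytic factor})
\]
and factoring the remaining analytic piece out of the matrix as a scalar $\lambda$-dependent left multiplier produces exactly the stated product over $I_-\setminus I$ together with a $\delta^{-\sigma_3}$ factor. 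This step provides the intermediate identity in the proposition and identifies the modulated norming constants $\tilde c_n(\lambda_0)$.

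Finally, for the reconstruction statement I would specialize to $\lambda=0$, where $\delta(0)$ and the product over $I_-\setminus I$ are explicit real scalars that preserve the symmetry of $M^{(2)}_{out}$ and therefore act trivially in the combination $(\mu_1\mu_2)^2$ once the $\sigma_3$-conjugation is unwound. Combining this with the previous display and the definition $\hat q_{sol}(y,t;\sigma^\triangle_{out})=\bigl((\mu_{out}^{(2)})_1(\mu_{out}^{(2)})_2\bigr)^2(y,t,0)-1$ yields the error bound $q(x,t;\sigma^\triangle_{out})=q(x,t;\sigma^\triangle_I)+\mathcal{O}(e^{-\varepsilon(I)|t|})$. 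The main obstacle is the bookkeeping in the second step: one must keep track of three simultaneous changes of norming constants (the $a_\triangle^{\sigma_3}$ conjugation, the $T$-factor that distinguishes $\delta$ from $T$, and the pole-removal that reduces $\Lambda$ to $\Lambda(I)$) and ensure that the rational left multiplier extracted in the factorization is analytic at the remaining poles so that it genuinely acts as an outer factor and does not alter the residue conditions. Once that analyticity is confirmed the error estimate at $\lambda=0$ is immediate from the multiplicative structure.
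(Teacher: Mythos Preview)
Your proposal is correct and follows exactly the route the paper intends: the paper states this proposition without proof, treating it as an immediate consequence of the two preceding propositions (the identification $M_{out}^{(2)}=M^{\Delta_{\lambda_0}^-}(\,\cdot\,|\sigma^\triangle_{out})$ and the localization estimate inside the cone), and your sketch spells out precisely how those two results combine, including the bookkeeping that the paper leaves implicit.
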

\section{The solvable RH model near phase points}
\hspace*{\parindent}
Recall that proposition \ref{prop} indicates the jump matrix tends to $I$ when $\lambda \in \Sigma_{\pm}^{(2)} \backslash U_{\pm \lambda_0}.$ As $\lambda \in U_{\pm \lambda_0},$ we consider to build a local model for function $E(\lambda)$ with a uniformly small jump.

There is no discrete spectrum in $U_{\pm \lambda_0}$, thus we replace $T(\lambda)$ with $\delta(\lambda)$ in (\ref{trans1}). Denote the expression after replacing $T_0(\pm\lambda_0)$ in the expression of $R_j(j=1,3,4,6,7,8)$ with $\delta(\pm\lambda_0)$ as $\tilde{R}_j(j=1,3,4,6,7,8)$, the jump matrix of RHP5 becomes
 \begin{equation}
J^{(2)}(y, t, \lambda)=\left\{
\begin{array}{ll}
\begin{pmatrix}
1 & 0 \\[6pt]
\tilde{R}_{1}(\lambda) e^{2 i t \theta} & 1
\end{pmatrix},& \lambda \in \Sigma_{1}, \\[15pt]
\begin{pmatrix}
1 & -\tilde{R}_{7}(\lambda) e^{-2i t \theta} \\[6pt]
0 & 1
\end{pmatrix},& \lambda \in \Sigma_{2} \cup \Sigma_{5}, \\[15pt]
\begin{pmatrix}
1 & 0 \\[6pt]
\tilde{R}_{8}(\lambda) e^{2 i t \theta} & 1
\end{pmatrix},& \lambda \in \Sigma_{3} \cup \Sigma_{8},\\[15pt]
\begin{pmatrix}
1 & -\tilde{R}_{6}(\lambda) e^{-2 i t \theta} \\[6pt]
0 & 1
\end{pmatrix}, & \lambda \in \Sigma_{4}, \\[15pt]
\begin{pmatrix}
1 & 0 \\[6pt]
\tilde{R}_{3}(\lambda) e^{2 i t \theta} & 1
\end{pmatrix},& \lambda \in \Sigma_{6},\\[15pt]
\begin{pmatrix}
1 & -\tilde{R}_{4} \left(\lambda\right) e^{2 i t \theta} \\[6pt]
0 & 1
\end{pmatrix},& \lambda \in \Sigma_{7}.
\end{array}\right.\label{jump5}
\end{equation}
According to \cite{xiao2019}, The RHP 5 can be transformed to the following solvable model.\\[6pt]
\textbf{RHP 10}\quad Find a matrix function $M^{*}(y, t, \lambda)$ with the following properties
\begin{enumerate}
    \item Analyticity: $M^{*}(y, t, \lambda)$ is meromorphic in $\mathbb{C} \backslash \Sigma^{(2)}.$
    \item Jump condition: 
    \begin{equation}
    M^{*}_{+}=M^{*}_{-} J^{(2)}(y, t, \lambda), \quad \lambda \in \Sigma^{(2)},
    \end{equation}
    where $J^{(2)}(y, t, \lambda)$ is defined as (\ref{jump5}).
    \item Normalization: $M^{*}(y, t, \lambda) \rightarrow I$, as $\lambda \rightarrow \infty $.
\end{enumerate} 
 According to proposition \ref{prop}, we get the above RH problem, which shows that jump contours can be changed from $\Sigma^{(2)}$ to $\Sigma^{(3)}$ by ignoring the jump condition on $\Sigma^{(2)}\backslash U_{\pm\lambda_0}$. The contour $\Sigma^{(3)}$ consisting of two crosses centered at $\lambda=\pm\lambda_0$, which finally leads to the asymptotics in the modulated decaying oscillations, is shown as figure \ref{f6}.
\begin{figure}[H]
 \centering
  \begin{tikzpicture}[node distance=2cm]
  \draw[dashed] (-4.5, 0) -- (4.5, 0);
  \draw[thick ](1,1)--(2,0);
  \draw[-latex](1,1)--(1.5, 0.5);
  \draw[thick ](2,0)--(3.5,-1.5);
  \draw[-latex](2,0)--(2.75,-0.75);
  \draw[thick ](-2,0)--(-1,1);
  \draw[-latex](-2,0)--(-1.5, 0.5);
  \draw[thick ](-2,0)--(-1,-1);
  \draw[-latex](-2,0)--(-1.5, -0.5);
  \draw[thick ](1,-1)--(2,0);
  \draw[-latex](1,-1)--(1.5, -0.5);
  \draw[thick ](2,0)--(3.5,1.5);
  \draw[-latex](2,0)--(2.75,0.75);
  \draw[thick ](-2,0)--(-3.5,1.5);
  \draw[-latex](-3.5,1.5)--(-2.75,0.75);
    \draw[thick ](-2,0)--(-3.5,-1.5);
  \draw[-latex](-3.5,-1.5)--(-2.75,-0.75);
  \node at (-3, 1.4) {$\Sigma_6$};
  \node at (-3, -1.4) {$\Sigma_7$};
  \node at (3, 1.4) {$\Sigma_1$};
  \node at (3, -1.4) {$\Sigma_4$};
  \node at (1.5, 0.9) {$\Sigma_2$};
  \node at (1.5,-0.9) {$\Sigma_3$};
  \node at (-1.5, 0.9) {$\Sigma_5$};
  \node at (-1.5,-0.9) {$\Sigma_8$};
  \node at (2,-0.35) {$\lambda_0$};
  \node at (-2,-0.35) {$-\lambda_0$};
  \node at (-2,-2) {$\Sigma_{-\lambda_0}$};
  \node at (2,-2) {$\Sigma_{\lambda_0}$};
  \end{tikzpicture}
\caption{The jump contour $\Sigma^{(3)}$ is composed of $ \Sigma_{-\lambda_0} $ and $ \Sigma_{\lambda_0} $.}
\label{f6}
\end{figure}
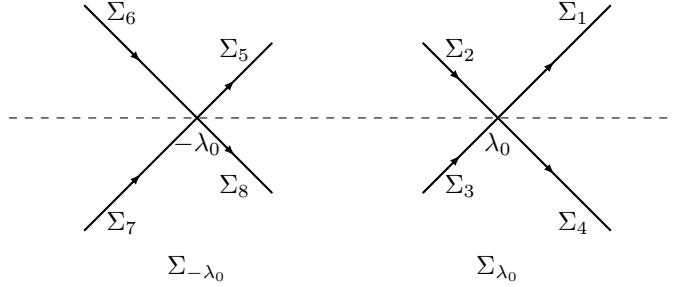
We denote $\Sigma_A$ and $\Sigma_B$ as the contours $\left\{\tilde{\lambda}=\lambda_{0} h e^{\pm i \pi / 4}:-\infty<h<\infty\right\}$, and extend the crosses $\Sigma_{-\lambda_0}$ and $\Sigma_{\lambda_0}$ to $\Sigma_A$ and $\Sigma_B$ by zero extension. We solve the RHP10 in two parts: $M^{\lambda_0}$ and $M^{-\lambda_0}$, which corresponds to the $2\times2$ matrix $M^A$ and $M^B$ respectively. Introducing the scaled operator
\begin{equation}
\left(N_{A} f\right)(\lambda)=f\left(-\lambda_{0}+\frac{\tilde{\lambda}}{\sqrt{48 t \lambda_{0}}}\right).
\end{equation}
The factor $\delta(\lambda) e^{-i t \theta(\lambda)}$ can be scaled as
\begin{equation}
N_{A} \delta(\lambda) e^{-i t \theta(\lambda)}=\delta_{A}^{0} \delta_{A}^{1}(\tilde{\lambda}),
\end{equation}
where
\begin{equation}
\begin{aligned}
\delta_{A}^{0}(\tilde{\lambda}) &=\left(192 t \lambda_{0}^{3}\right)^{i \nu / 2} e^{-8 i t \lambda_{0}^{3}} e^{\tilde{\eta}\left(-\lambda_{0}\right)}, \\
\delta_{A}^{1}(\tilde{\lambda}) &=(-\tilde{\lambda})^{-i \nu}\left(\frac{-2 \lambda_{0}}{\tilde{\lambda} / \sqrt{48 t \lambda_{0}}-2 \lambda_{0}}\right)^{-i \nu} e^{i\left(\tilde{\lambda}^{2} / 4\right)\left(1-\tilde{\lambda}\left(432 t \lambda_{0}^{3}\right)^{-1 / 2}\right)} e^{\left(\tilde{\eta}\left(\left[\tilde{\lambda} / \sqrt{48 t \lambda_{0}}\right]-\lambda_{0}\right)-\tilde{\eta}\left(-\lambda_{0}\right)\right)} .
\end{aligned}
\end{equation}
and $\tilde{\eta}\left(\lambda_{0}\right)=-\frac{1}{2 \pi i} \int_{-\lambda_{0}}^{\lambda_{0}} \log \left|\lambda_{0}-s\right| d \log \left(1-|r(s)|^{2}\right).$
Then the jump matrix $J^{\lambda_0}$ becomes $J^A$. We can decompose the new jump matrix
\begin{equation}
 J^{A}(\tilde{\lambda})=\left(I-w_{-}^{A}\right)^{-1}\left(I+w_{+}^{A}\right).  
\end{equation} 
Then the solution of the RH model centered at $-\lambda_0$ is given by 
\begin{equation}
M^{A}(\tilde{\lambda})=I+\frac{1}{2 \pi i} \int_{\Sigma_{A}} \frac{\left(\left(1-A\right)^{-1} I\right)(\zeta) w^{A}(\zeta)}{\zeta-\tilde{\lambda}} d \zeta,
\end{equation}
where $w^A=w_-^A+w_+^A.$
Particularly, the large $\tilde{\lambda}$ behavior of $M^{A}(\tilde{\lambda})$ can be expressed as 
\begin{equation}
M^{A}(\tilde{\lambda})=I-\frac{M_{1}^{A}}{\tilde{\lambda}}+O\left(\frac{1}{\tilde{\lambda}^{2}}\right).
\end{equation}

Similarly, for $\Sigma_{-\lambda_0}$, the scaled operator is introduced as 
\begin{equation}
\left(N_{B} f\right)(\lambda)=f\left(\lambda_{0}+\frac{\tilde{\lambda}}{\sqrt{48 t \lambda_{0}}}\right),
\end{equation}
and 
\begin{equation}
N_{B} \delta(\lambda) e^{-i t \theta(\lambda)}=\delta_{B}^{0} \delta_{B}^{1}(\tilde{\lambda}),
\end{equation}
where 
\begin{equation}
\begin{aligned}
\delta_{B}^{0}(\tilde{\lambda}) &=\left(192 t \lambda_{0}^{3}\right)^{-i \nu / 2} e^{8 i t \lambda_{0}^{3}} e^{\tilde{\eta}\left(\lambda_{0}\right)}, \\
\delta_{B}^{1}(\tilde{\lambda}) &=\tilde{\lambda}^{i \nu}\left(\frac{2 \lambda_{0}}{\tilde{\lambda} / \sqrt{48 t \lambda_{0}}+2 \lambda_{0}}\right)^{i \nu} e^{-i\left(\tilde{\lambda}^{2} / 4\right)\left(1+\tilde{\lambda}\left(432 t \lambda_{0}^{3}\right)^{-1 / 2}\right)} e^{\left(\tilde{\eta}\left(\left[\tilde{\lambda} / \sqrt{48 t \lambda_{0}}\right]+\lambda_{0}\right)-\tilde{\eta}\left(\lambda_{0}\right)\right)} .
\end{aligned}
\end{equation}
Particularly, the large $\tilde{\lambda}$ behavior of $M^{B}(\tilde{\lambda})$ is given by 
\begin{equation}
M^{B}(\tilde{\lambda})=I-\frac{M_{1}^{B}}{\tilde{\lambda}}+O\left(\frac{1}{\tilde{\lambda}^{2}}\right),\label{mmmm}
\end{equation}
where
\begin{equation}
\left(M_{1}^{B}\right)_{21}=-i \beta_{21},\quad\left(M_{1}^{B}\right)_{12}=i \overline{\beta_{21}}.
\end{equation}
with
\begin{equation}
\beta_{21}=\frac{r\left(\lambda_{0}\right) \Gamma(-i \nu) \nu}{\sqrt{2 \pi} e^{i \pi / 4} e^{-\pi \nu / 2}},
\end{equation}
which is obtained from \cite{xiao2019}.
With aid of symmetry (\ref{symm2}), we have $M_1^A=-\overline{M_1^B}.$

As $M^{(2)}_{out}$ is an analytic and bounded function in $U_{\pm\lambda_0}$ and use RHP5, we define two local model $M^{\pm\lambda_0}$ in (\ref{3part}) by
\begin{equation}
    M^{\pm\lambda_0}(\lambda)=M^{(2)}_{out}(\lambda)M^{*}(\lambda),\quad \lambda\in U_{\pm\lambda_0},\label{3part2}
\end{equation}
which also satisfies the jump condition of $M^{(2)}_{RHP}$.
\section{The small norm RH problem for $E(\lambda)$}
\hspace*{\parindent}
In this section, we study the small norm RH problem of $E(\lambda)$. From (\ref{3part}) and (\ref{3part2}) we have 
\begin{equation}
E(\lambda)=\left\{\begin{aligned}
&M_{R H P}^{(2)}(\lambda) M_{(o u t)}^{(2)}(\lambda)^{-1}, & \lambda \in \mathbb{C} \backslash U_{\pm \lambda_{0}}, \\
&M_{R H P}^{(2)}(\lambda) M^*(\lambda)^{-1} M_{(o u t)}^{(2)}(\lambda)^{-1}, & \lambda \in U_{\pm \lambda_{0}}.
\end{aligned}\right.\label{EEE}
\end{equation}
It is easy to get the jump contour of $E(\lambda)$ is
\begin{equation}
    \Sigma^{(E)}=\partial U_{\pm\lambda_0}\cup(\Sigma^{(2)}\backslash U_{\pm\lambda_0}),
\end{equation}
which is shown in figure \ref{f7}.
\begin{figure}[H]
 \centering
  \begin{tikzpicture}[node distance=2cm][scale=1.5]
  \draw[thick ](0,1)--(1,0);
  \draw[-latex](0,1)--(0.5, 0.5);
  \draw[thick ](1,0)--(2.5,-1.5);
  \draw[-latex](1,0)--(1.75,-0.75);
  \draw[thick ](-1,0)--(0,1);
  \draw[-latex](-1,0)--(-0.5, 0.5);
  \draw[thick ](-1,0)--(0,-1);
  \draw[-latex](-1,0)--(-0.5, -0.5);
  \draw[thick ](0,-1)--(1,0);
  \draw[-latex](0,-1)--(0.5, -0.5);
  \draw[thick ](1,0)--(2.5,1.5);
  \draw[-latex](1,0)--(1.75,0.75);
  \draw[thick ](-1,0)--(-2.5,1.5);
  \draw[-latex](-2.5,1.5)--(-1.75,0.75);
    \draw[thick ](-1,0)--(-2.5,-1.5);
  \draw[-latex](-2.5,-1.5)--(-1.75,-0.75);
  \fill[fill=white](1,0) circle [radius=0.45];
  \fill[fill=white](-1,0) circle [radius=0.45];
    \draw[thick](1,0)circle [radius=0.45];
     \draw[thick](-1,0)circle [radius=0.45];
 \draw[dashed] (-3, 0) -- (3, 0);
 \draw [fill] (1,0) circle [radius=0.03];
 \draw [fill] (-1,0) circle [radius=0.03];
 \node at (1.05,-0.2) {$\lambda_0$};
 \node at (-1.06,-0.2) {$-\lambda_0$};
  \end{tikzpicture}
\caption{The jump contour $ \Sigma^{(E)} of E(\lambda)$.}
\label{f7}
\end{figure}
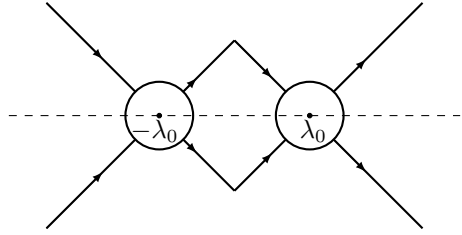
By the definition (\ref{EEE}) of $E(\lambda)$, we can calculate the jump matrix $J^{(E)}$ for $E(\lambda)$ 
\begin{equation}
J^{(E)}(\lambda)=\left\{\begin{aligned}
&M_{(o u t)}^{(2)}(\lambda) J^{(2)}(\lambda) M_{(o u t)}^{(2)}(\lambda)^{-1}, & \lambda \in \Sigma^{(2)} \backslash U_{\pm \lambda_0}, \\
&M_{(o u t)}^{(2)}(\lambda)M^{*}(\lambda) M_{(o u t)}^{(2)}(\lambda)^{-1}, & \lambda \in \partial U_{\pm \lambda_{0}} .
\end{aligned}\right.\label{jump6}
\end{equation}
Then, we establish a RH problem for $E(\lambda)$ as follows:\\[6pt]
\textbf{RHP 11}\quad Find a matrix-valued function $E(\lambda)$ with the following properties
\begin{enumerate}
    \item Analyticity: $E(\lambda)$ is meromorphic in $\mathbb{C} \backslash \Sigma^{(E)}.$
    \item Jump condition: 
    \begin{equation}
    E_{+}(\lambda)=E_{-}(\lambda) J^{(E)}(\lambda), \quad \lambda \in \Sigma^{(E)},
    \end{equation}
    where $J^{(\pm\lambda_0))}(y, t, \lambda)$ is defined as (\ref{jump6}).
    \item Normalization: $E(\lambda) \rightarrow I$, as $\lambda \rightarrow \infty $.
\end{enumerate} 

Next, we will show that for large time the function $E(\lambda)$ solves a small norm RH problem. As $M_{(o u t)}^{(2)}(\lambda)$ is bounded in $\mathbb{C}$, by applying proposition \ref{prop} and (\ref{mmmm}) , we get that as $t\rightarrow +\infty$, the jump matrix $J^{(E)}(\lambda)$ admits
\begin{equation}
\left|J^{(E)}(\lambda)-I\right|=\left\{\begin{array}{ll}
\mathcal{O}\left(e^{-16|t|\left(|p|\mp\lambda_{0}\right)|q|\left(2 \lambda_{0}+|p|\right)}\right), & \lambda \in \Sigma_{\pm}^{(2)} \backslash U_{\pm \lambda_{0}}, \\[6pt]
\mathcal{O}\left(|t|^{-1 / 2}\right), & \lambda \in \partial U_{\pm\lambda_0}.
\end{array}\right.\label{est2}
\end{equation}
Therefore, we can prove the existence and uniqueness of the RHP11 by using a small-norm RH problem, and according to Beal-Cofiman theorem, we construct the solution of RHP11. Firstly, we do a trivial decompose of $J^{(E)}$ 
\begin{equation}
    J^{(E)}=\left(b_{-}\right)^{-1} b_{+}, \quad b_{-}=I, \quad b_{+}=J^{(E)}.
\end{equation}
Then we have 
\begin{equation}
C_{E}(f)(z)=C_{-}\left(f\left(J^{(E)}-I\right)\right),\label{ef}
\end{equation}
where $C_{-}$ is the usual Cauchy projection operator on $\Sigma^{(E)}$
\begin{equation}
C_{-} f(z)=\lim _{\lambda^{\prime} \rightarrow \lambda \in \Sigma^{(E)}} \frac{1}{2 \pi i} \int_{\Sigma^{(E)}} \frac{f(s)}{s-\lambda^{\prime}} d s.
\end{equation}
Thus, the solution of RHP11 can be expressed as 
\begin{equation}
E(\lambda)=I+\frac{1}{2 \pi i} \int_{\Sigma^{(E)}} \frac{(I+\rho(s))\left(J^{(E)}-I\right)}{s-\lambda} d s,\label{edj}
\end{equation}
where $\rho \in L^{2}\left(\Sigma^{(E)}\right)$ is the unique solution of the following equation
\begin{equation}
\left(1-C_{E}\right) \rho=C_{E}(I).
\end{equation}
From (\ref{est2}) and (\ref{ef}), we get 
\begin{equation}
\left\|C_E\right\|_{L^{2}\left(\Sigma^{(E)}\right)} \lesssim\left\|C_{-}\right\|_{L^{2}\left(\Sigma^{(E)}\right)}\left\|J^{(E)}-I\right\|_{L^{\infty}\left(\Sigma^{(E)}\right)} \lesssim \mathcal{O}\left(|t|^{-1 / 2}\right),
\end{equation}
which means operator $(1-C_E)^{-1}$ exists, it follows that the existence of $\rho$ and $E(\lambda)$, the uniqueness is obvious.
In addition, we have
\begin{equation}
\|\rho\|_{L^{2}\left(\Sigma^{(E)}\right)} \lesssim \frac{\left\|C_{E}\right\|}{1-\left\|C_{E}\right\|} \lesssim|t|^{-1 / 2},
\end{equation}
which indicates the boundedness of $E(\lambda)$. 

Moreover, we consider the asymptotic behavior of $E(\lambda)$ as $\lambda\rightarrow0$ and the long-time asymptotic behavior of $E(0)$ to solve the initial value problem (\ref{HDe1}) and study the soliton resolution. As (\ref{est2})
and (\ref{edj}) indicates $E(\lambda)$ tends to zero in $\Sigma^{(E)}\backslash \partial U_{\pm\lambda_0}$ when $t\rightarrow \infty$, so we only need to consider its long time asymptotic behavior on $\partial U_{\pm\lambda_0}$.

As $\lambda\rightarrow0$, $E(\lambda)$ has expansion
\begin{equation}
E(\lambda)=E(0)+E_{1} \lambda+\mathcal{O}\left(\lambda^{2}\right),
\end{equation}
where 
\begin{equation}
E(0)=I+\frac{1}{2 \pi i} \int_{\Sigma^{(E)}} \frac{(I+\rho(s))\left(J^{(E)}-I\right)}{s} d s,
\end{equation}
\begin{equation}
E_{1}=-\frac{1}{2 \pi i} \int_{\Sigma^{(E)}} \frac{(I+\rho(s))\left(J^{(E)}-I\right)}{s^{2}} d s.
\end{equation}
And the long time asymptotic behavior can be calculated as follows,
\begin{equation}
E(0)=I+E^{(1)}|t|^{-1 / 2} +\mathcal{O}\left(|t|^{-1}\right),
\end{equation}
\begin{equation}
    E_1=E^{(2)}|t|^{-1 / 2} +\mathcal{O}\left(|t|^{-1}\right),
\end{equation}
where
\begin{equation}
\begin{aligned}
E^{(1)} &=\frac{1}{2 \pi i} \int_{\partial U_{\pm \lambda_{0}}} \frac{M_{(o u t)}^{(2)}(s)^{-1} M^B_1(\pm \lambda_0) M_{(o u t)}^{(2)}(s)}{s\left(s\mp \lambda_{0}\right)\sqrt{48\lambda_0}} d s \\[6pt]
&=-\frac{1}{\sqrt{48\lambda_{0}^3}}M_{(o u t)}^{(2)}(-\lambda_0)^{-1}M^B_1(-\lambda_0)M_{(o u t)}^{(2)}(-\lambda_0)\\[6pt]
&+\frac{1}{\sqrt{48\lambda_{0}^3}}M_{(o u t)}^{(2)}(\lambda_0)^{-1}M^B_1(\lambda_0)M_{(o u t)}^{(2)}(\lambda_0),
\end{aligned}
\end{equation}
and 
\begin{equation}
\begin{aligned}
E^{(2)} &=\frac{1}{2 \pi i} \int_{\partial U_{\pm \lambda_{0}}} \frac{M_{(o u t)}^{(2)}(s)^{-1} M^B_1(\pm \lambda_0) M_{(o u t)}^{(2)}(s)}{s^2\left(s\mp \lambda_{0}\right)\sqrt{48\lambda_0}} d s \\[6pt]
&=\frac{1}{\sqrt{48\lambda_{0}^5}}M_{(o u t)}^{(2)}(-\lambda_0)^{-1}M^B_1(-\lambda_0)M_{(o u t)}^{(2)}(-\lambda_0)\\[6pt]
&-\frac{1}{\sqrt{48\lambda_{0}^5}}M_{(o u t)}^{(2)}(\lambda_0)^{-1}M^B_1(\lambda_0)M_{(o u t)}^{(2)}(\lambda_0).
\end{aligned}
\end{equation}
Moreover, we have
\begin{equation}
E(0)^{-1}=I+\mathcal{O}\left(|t|^{-1 / 2}\right).
\end{equation}
\section{The pure $\bar{\partial}$-problem}
\hspace*{\parindent}
In this section, we study the long time asymptotics behavior of pure $\bar{\partial}$-problem RHP6. The solution of RHP6 can be given by
\begin{equation}
M^{(3)}(\lambda)=I-\frac{1}{\pi} \int_{\mathbb{C}} \frac{M^{(3)} W^{(3)}}{s-\lambda} d m(s), \label{m3}
\end{equation}
where $m(s)$ is the Lebegue measure on the $\mathbb{C}$, in fact, (\ref{m3}) is equivalent to the following expression 
\begin{equation}
M^{(3)}(\lambda)=I(I-C_{\bar{\partial}})^{-1},\label{m33}
\end{equation}
where $C_{\bar{\partial}}$ is the Cauchy integral operator
\begin{equation}
C_{\bar{\partial}}[f](\lambda)=-\frac{1}{\pi} \int_{\mathbb{C}} \frac{f(s) W(s)}{s-\lambda}d m(s).
\end{equation}

First we prove the existence of operator $C_{\bar{\partial}}$. Thus we just need to prove the following proposition.
\begin{proposition}
For $t\rightarrow\infty$, the norm of operator $C_{\bar{\partial}}$ tends to zero, and it follows that
\begin{equation}
\|C_{\bar{\partial}}\|_{L^{\infty} \rightarrow L^{\infty}} \leqslant c t^{-1 / 6}.\label{est5}
\end{equation}\label{prop2}
\end{proposition}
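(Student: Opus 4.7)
The plan is to reduce the operator-norm bound to a uniform pointwise bound on the defining integral. For any $f\in L^{\infty}(\mathbb{C})$,
\begin{equation}
\|C_{\bar{\partial}}f\|_{L^{\infty}(\mathbb{C})}\leq\|f\|_{L^{\infty}}\cdot\frac{1}{\pi}\sup_{\lambda\in\mathbb{C}}\iint_{\mathbb{C}}\frac{|W^{(3)}(s)|}{|s-\lambda|}\,dm(s),
\end{equation}
so (\ref{est5}) follows once I prove $\sup_{\lambda}\iint|W^{(3)}|/|s-\lambda|\,dm\lesssim t^{-1/6}$. Since $M^{(2)}_{RHP}$ and $(M^{(2)}_{RHP})^{-1}$ are uniformly bounded on $\mathbb{C}$ by the analysis of RHP 7 and the local models of Sections 7--8, one has $|W^{(3)}|\lesssim|\bar{\partial}\mathcal{R}^{(2)}|$, and the support of $\bar{\partial}\mathcal{R}^{(2)}$ is the union of the six sectors $\Omega_{j}$, $j\in\{1,3,4,6,7,8\}$. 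The symmetries $\lambda\mapsto-\lambda$ and $\lambda\mapsto\bar{\lambda}$ interchange these sectors compatibly with the bounds (\ref{est3})--(\ref{est4}) and with the sign of $\mathrm{Re}(2it\theta)$, so it is enough to estimate the integral over $\Omega_{1}$.

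In $\Omega_{1}$ I parametrize $s=\lambda_{0}+u+iv$ with $0<v<u$. The Taylor expansion $\theta(s)-\theta(\lambda_{0})=12\lambda_{0}(s-\lambda_{0})^{2}+4(s-\lambda_{0})^{3}$ gives
\begin{equation}
\mathrm{Re}\bigl(2it\theta(s)\bigr)=-8t\bigl(6\lambda_{0}uv+3u^{2}v-v^{3}\bigr)\leq -c_{0}\,t\,uv
\end{equation}
for some $c_{0}>0$ throughout $\Omega_{1}$. Combining this with the splitting $|\bar{\partial}R_{1}|\lesssim|\bar{\partial}X_{\Lambda}|+|r'(u+\lambda_{0})|+(u^{2}+v^{2})^{-1/4}$ from (\ref{est3}), I decompose the integral over $\Omega_{1}$ as $I_{1}+I_{2}+I_{3}$, one piece per summand. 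The contribution $I_{1}$ is exponentially small in $t$ because $\bar{\partial}X_{\Lambda}$ is supported inside the fixed annular region $d/3<\mathrm{dist}(\cdot,\Lambda\cup\overline{\Lambda})<2d/3$, which is bounded away from $\lambda_{0}$, so $uv$ is bounded below on its support.

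For $I_{2}$ I fix $v$ and apply Cauchy--Schwarz in $u$, using $r'\in L^{2}$ from Assumption \ref{assump}, to obtain
\begin{equation}
\int_{0}^{\infty}\frac{|r'(u+\lambda_{0})|\,e^{-c_{0}tuv}}{|s-\lambda|}\,du\leq\|r'\|_{L^{2}(\mathbb{R})}\left(\int_{0}^{\infty}\frac{e^{-2c_{0}tuv}}{|s-\lambda|^{2}}\,du\right)^{1/2},
\end{equation}
and then integrate in $v$ after the scaling $v\mapsto v/\sqrt{t}$. For $I_{3}$ I pair the singular weight $(u^{2}+v^{2})^{-1/4}$ against the Cauchy kernel by H\"older's inequality with exponents chosen so that $|s-\lambda|^{-1}$ is handled in $L^{q}_{\mathrm{loc}}$ with $q<2$ and $(u^{2}+v^{2})^{-1/4}$ lies in the dual $L^{p}$ on a bounded set with $p>2$; rescaling $(u,v)\mapsto t^{-1/2}(u,v)$ converts the exponential into $e^{-c_{0}uv}$ and produces, after optimizing the pair $(p,q)$, an overall factor of $t^{-1/6}$.

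The main obstacle is the treatment of $I_{3}$. The singular weight $(u^{2}+v^{2})^{-1/4}$ originates from the factors $(\lambda\mp\lambda_{0})^{-2i\eta(\pm\lambda_{0})}$ in the definition of $R_{j}$ on the contours $\Sigma_{k}$; these are only H\"older-$1/2$ continuous at the stationary points. Their interaction with the Cauchy singularity $|s-\lambda|^{-1}$, when $\lambda$ itself is allowed to approach $\lambda_{0}$, forces the simultaneous balancing of two non-integrable singularities. The careful choice of H\"older exponents and of the rescaling above, rather than the simpler $|r'|$-contribution $I_{2}$ (which decays strictly faster), is the step that pins down the exact rate $t^{-1/6}$ in (\ref{est5}) and hence the invertibility of $I-C_{\bar{\partial}}$ via Neumann series for large $t$.
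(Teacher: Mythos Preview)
Your proposal follows the same approach as the paper: both reduce to the sector $\Omega_1$, invoke the uniform boundedness of $M^{(2)}_{RHP}$ and its inverse to pass from $W^{(3)}$ to $\bar\partial\mathcal{R}^{(2)}$, and split the resulting integral into the same three pieces $I_1+I_2+I_3$ according to the three summands in the $\bar\partial R_j$ estimate (\ref{est3}). The paper's proof is in fact terser than yours---it writes $I_1,I_2,I_3$ in unshifted coordinates $s=u+iv$ (rather than your $s=\lambda_0+u+iv$) and then simply asserts $I_j\leq c_j t^{-1/6}$ by the method of \cite{yang2020}---so your sketch of exponential decay for $I_1$, Cauchy--Schwarz for $I_2$, and H\"older plus rescaling for $I_3$ already goes beyond what the paper records.
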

\begin{proof}Assume $f \in L^{\infty}(\Omega_1)$, $s=u+iv$, then we get $$Re(2 i t \theta)=8v^3t+24(\lambda_0^2-u^2)vt.$$ From (\ref{est3}) and (\ref{est4}) we have
\begin{equation}
\begin{aligned}
\left\|C_{\bar{\partial}}(f)\right\|_{L^{\infty}} & \leq\frac{1}{\pi}\|f\|_{L^{\infty}}  \int_{\Omega_1}\frac{\left|W^{(3)}(s)\right|}{|s-\lambda|} d m(s) \\[6pt]
& \leq \frac{1}{\pi}\|f\|_{L^{\infty}}\left\|M_{R H P}^{(2)}\right\|_{L^{\infty}}\left\|(M_{R H P}^{(2)})^{-1}\right\|_{L^{\infty}}\int_{\Omega_1} \frac{\left|\bar{\partial}\mathcal{R}^{(2)}(s)\right|}{|s-\lambda|} d m(s)   \\[6pt]
&\leq \frac{1}{\pi}\|f\|_{L^{\infty}}\int_{\Omega_1} \frac{\left|\bar{\partial}R_1\right|e^{8v^3t+24(\lambda_0^2-u^2)v t}}{|s-\lambda|} d m(s) \\[6pt]
& \leq c\left(I_{1}+I_{2}+I_{3}\right) ,
\end{aligned}
\end{equation}
where 
\begin{equation}
\begin{aligned}
&I_{1}=\int_{0}^{+\infty} \int_{\lambda_{0}+v}^{+\infty} \frac{\left|\bar{\partial} X_{\Lambda}(s)\right| e^{8v^3t+24(\lambda_0^2-u^2)v t}}{|s-\lambda|} d u  d v, \\[6pt]
&I_{2}=\int_{0}^{+\infty} \int_{\lambda_{0}+v}^{+\infty} \frac{\left|s_{1}^{\prime}(u)\right| e^{8v^3t+24(\lambda_0^2-u^2)v t}}{|s-\lambda|} d u d v, \\[6pt]
&I_{3}=\int_{0}^{+\infty} \int_{\lambda_{0}+v}^{+\infty}\frac{|s-\lambda_0|^{-\frac{1}{2}} e^{8v^3t+24(\lambda_0^2-u^2)v t}}{|s-\lambda|} d u  d v.
\end{aligned}
\end{equation}
To obtain (\ref{est5}), we only need to prove 
\begin{equation}
I_{j}\leq c_{j} t^{-1 / 6}, \quad j=1,2,3,
\end{equation}
where $c_j$ are different constants, the estimation is similar to \cite{yang2020}.
Hence (\ref{est5}) holds on $\Omega_1$. On other areas, the results can be similarly calculated.
\end{proof}

Next, for our purpose of studying the long time asymptotic behaviors of the solution of (\ref{HDe1}), we consider the asymptotic behavior of $M^{(3)}(0)$ and $M^{(3)}_1$
as $\lambda\rightarrow0$, $M^{(3)}(0)$ and $M^{(3)}_1$ are the different power coefficient of the following expansion
\begin{equation}
M^{(3)}(\lambda)=M^{(3)}(0)+M_{1}^{(3)} \lambda+\mathcal{O}\left(\lambda^{2}\right), \quad \lambda \rightarrow 0.
\end{equation}
The expression of $M^{(3)}(0)$ and $M^{(3)}_1$ can be easily obtained by (\ref{m3}), we have
\begin{equation}
M^{(3)}(\lambda)=I-\frac{1}{\pi} \int_{\mathbb{C}} \frac{M^{(3)} W^{(3)}}{s} d m(s), 
\end{equation}
\begin{equation}
M^{(3)}(\lambda)=\frac{1}{\pi} \int_{\mathbb{C}} \frac{M^{(3)} W^{(3)}}{s^2} d m(s). 
\end{equation}
Moreover, $M^{(3)}_1$ admits the following proposition.
\begin{proposition}
There exists constant $c$ such that
\begin{equation}
\left|M_{1}^{(3)}(y, t)\right| \leq c t^{-1},
\end{equation}
\begin{equation}
\left\|M^{(3)}(0)-I\right\| \leq c t^{-1}.
\end{equation}
\end{proposition}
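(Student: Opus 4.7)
The plan is to estimate the two quantities directly from the integral formulas
\begin{equation*}
M^{(3)}(0)-I \;=\; -\frac{1}{\pi}\int_{\mathbb{C}}\frac{M^{(3)}(s)\,W^{(3)}(s)}{s}\,dm(s),\qquad
M^{(3)}_1 \;=\; -\frac{1}{\pi}\int_{\mathbb{C}}\frac{M^{(3)}(s)\,W^{(3)}(s)}{s^{2}}\,dm(s),
\end{equation*}
obtained by expanding $1/(s-\lambda)$ at $\lambda=0$ in the representation (\ref{m3}). Proposition \ref{prop2} gives $\|M^{(3)}\|_{L^{\infty}}\lesssim 1$ through the bounded inverse $(I-C_{\bar\partial})^{-1}$, and Sections 7--9 provide uniform boundedness of $M^{(2)}_{RHP}$ and its inverse on the support of $\bar\partial\mathcal{R}^{(2)}$, so pointwise $|M^{(3)}W^{(3)}|\lesssim|\bar\partial\mathcal{R}^{(2)}|$. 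Hence the bounds reduce to showing
\begin{equation*}
\sum_{j\in\{1,3,4,6,7,8\}}\int_{\Omega_{j}}\frac{|\bar\partial R_{j}(s)|\,e^{-|\mathrm{Re}(2it\theta(s))|}}{|s|^{k}}\,dm(s)\;\lesssim\; t^{-1},\qquad k=1,2.
\end{equation*}

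For each active sector I would split $|\bar\partial R_{j}|$ via the three-term estimate of Section 5 and control each piece separately. The $|\bar\partial X_{\Lambda}|$-piece is supported at distance $\geq d/3$ from $\pm\lambda_{0}$, where $|\mathrm{Re}(2it\theta)|\gtrsim t$ uniformly, producing exponential decay. The $|s_{j}'(\mathrm{Re}\,s)|$- and $|s\mp\lambda_{0}|^{-1/2}$-pieces are handled by parametrizing $s=\pm\lambda_{0}+u+iv$, invoking the sign-table lower bound $|\mathrm{Re}(2it\theta)|\gtrsim t|v|(v^{2}+\lambda_{0}|u|)$ in the relevant sectors, and combining Cauchy--Schwarz in $u$ against the $H^{1,1}(\mathbb{R})$ norm of $r$ from Assumption \ref{assump} with a Gaussian integration in $v$. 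On $\Omega_{1},\Omega_{3},\Omega_{4},\Omega_{6}$ the weight $|s|^{-k}$ is uniformly bounded by $\lambda_{0}^{-k}$, so no additional care is required; on $\Omega_{7},\Omega_{8}$ the support of $\bar\partial R_{j}$ can reach $s=0$, but the relation $r(0)=0$ (which follows from $b(\lambda)=\mathcal{O}(\lambda)$ and $a(0)=1$ derived in Section 2) forces the boundary values of $R_{7},R_{8}$ on $(-\lambda_{0},\lambda_{0})$ to vanish linearly at the origin, and the freedom in the extension in Proposition 5.1 may be used to arrange that $R_{j}$, and hence $\bar\partial R_{j}$, acquires an auxiliary factor $|s|$ near $0$, restoring absolute integrability of $|\bar\partial R_{j}|/|s|^{k}$ for $k\leq 2$.

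The main obstacle is the quantitative upgrade from the $t^{-1/6}$ operator-norm bound of Proposition \ref{prop2} to the sharp $t^{-1}$ here. The $t^{-1/6}$ rate reflects the worst case in which the test point $\lambda$ coincides with a stationary phase point; at $\lambda=0$ we are uniformly separated by $\lambda_{0}$ from $\pm\lambda_{0}$, so the Cauchy weight $|s|^{-k}$ is a smooth bounded function on a neighborhood of the phase points, and this makes two independent Gaussian factors $t^{-1/2}$ simultaneously available---one from the $v$-integration and one from the $u$-singularity integrated against the cross-term exponential $e^{-ct|v||u|}$. Carrying out both Gaussians sharply in each of the six active sectors, along the lines of the analogous calculation in \cite{yang2020}, is the technical heart of the argument; summation then yields the uniform bounds $\|M^{(3)}(0)-I\|\lesssim t^{-1}$ and $|M^{(3)}_{1}|\lesssim t^{-1}$.
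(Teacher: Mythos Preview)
Your proposal is correct and follows the same strategy the paper invokes: the paper's own proof is the single sentence ``The proposition can be proved in a similar way to \cite{yang2020}'', i.e.\ it defers entirely to the standard $\bar\partial$-steepest descent estimate, which is precisely the computation you outline---expand $(s-\lambda)^{-1}$ at $\lambda=0$, use the boundedness of $M^{(3)}$ and $M^{(2)}_{RHP}$ to reduce to $\int_{\Omega_j}|\bar\partial R_j|\,e^{-|\mathrm{Re}(2it\theta)|}|s|^{-k}\,dm(s)$, split via the three-term bound on $\bar\partial R_j$, and integrate out the two Gaussian directions. In particular your observation that the inner sectors $\Omega_7,\Omega_8$ reach $s=0$ and that the integrability of the weight $|s|^{-2}$ there must be rescued by the vanishing $r(0)=0$ (built into the extension of $R_7,R_8$) is exactly the extra ingredient needed in this problem compared with the NLS/mKdV prototypes, and is what the referenced argument in \cite{yang2020} supplies; the paper does not spell this out, but your sketch does.
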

\begin{proof}
The proposition can be proved in a similar way to \cite{yang2020}.
\end{proof}

\section{Soliton resolution for the Harry Dym equation}
\hspace*{\parindent}
In this section, we are ready to analyze the long time asymptotic behaviors of the soliton which solve the Harry Dym equation (\ref{HDe1}). According to the series of transformations we have done before, we have
\begin{equation}
M(x,t,\lambda)=\hat{M}(y,t,\lambda)=M^{(3)}(\lambda) E(\lambda) M^{(2)}_{out}(\lambda) \mathcal{R}^{(2)}(\lambda)^{-1} T(\lambda)^{\sigma_{3}}, \quad \lambda \in \mathbb{C} \backslash U_{\pm \lambda_{0}}.
\end{equation}
Our purpose is to reconstruct the solution $\hat{q}(y,t)$ by (\ref{relation5}), which require us to consider the situation of $\lambda\rightarrow0$. For convenience, taking $\lambda\rightarrow0$ along the imaginary axis which implies $\mathcal{R}^{(2)}(\lambda)=I$ we have
\begin{equation}
\hat{M}(y,t,\lambda)=M^{(3)}(\lambda) E(\lambda) M^{(2)}_{out}(\lambda) T(\lambda)^{\sigma_{3}},\label{solu}
\end{equation}
and then 
\begin{equation}
\begin{aligned}
\hat{M}&=\left(M^{(3)}(0)+M_{1}^{(3)} \lambda+\cdots\right)\left(E(0)+E_{1} \lambda+\cdots\right)(M^{(2)}_{out})\left(T(0)\left(1+\lambda T_{1}\right)+\cdots\right)^{\sigma_{3}}\\
&=M^{(2)}_{out}(0)T(0)^{\sigma_3}+M^{(2)}_{out}(\lambda)T(0)^{\sigma_3}T_1^{\sigma_3}\lambda+T(0)^{\sigma_3}E^{(2)}M^{(2)}_{out}(\lambda)\lambda t^{-\frac{1}{2}}+\mathcal{O}(t^{-1}).
\end{aligned}
\end{equation}
Reviewing the reconstruction formula (\ref{relation5}) of the solution $\hat{q}(y,t)$, we get
\begin{equation}
\begin{aligned}
\hat{q}(y,t)&=\left(\hat{\mu}_{1} \hat{\mu}_{2}\right)^{2}(y, t, 0)-1\\[6pt]
&=\lim _{\lambda \rightarrow 0}(\hat{M}_{11}+\hat{M}_{21})^2(\hat{M}_{12}+\hat{M}_{22})^2(y,t,\lambda)-1.
\end{aligned}
\end{equation}

Based on the above analysis, we summarize the long time asymptotic behavior of the solution as the following theorem.
\begin{thm}
Suppose that the initial values $q_0(x)\in H^{1,1}(\mathbb{R})$ satisfy the Assumption \ref{assump}, whose corresponding scattering data is recorded as $\sigma_{d}=\left\{\left(\lambda_{n}, c_{n}\right)\right\}_{n=1}^{N}$. Denote $q(x,t)$ as the solution for the initial-value problem (\ref{HDe1})-(\ref{chuzhi}).
For the fixed $y_{1}, y_{2}, v_{1}, v_{2} \in \mathbb{R}$ with $y_1\leq y_2$ and $v_1\leq v_2$, define
\begin{equation}
    I=\left\{\lambda:\frac{v_{1}}{4}<|\lambda|^{2}<\frac{ v_{2}}{4}\right\},\quad
\Lambda(I)=\left\{\lambda_{n} \in \Lambda: \lambda_{n} \in I\right\},
\end{equation}
which correspond to the solution $q(x,t;\sigma_{I})$. The corresponding scattering data are
\begin{equation}
\begin{aligned}
&\sigma_{I}=\left\{\left(\lambda_{n}, c_{n}(I)\right), \lambda_{n} \in \Lambda(I)\right\},\\
&c_{n}(I)=c_{n} \prod_{\operatorname{Re} \lambda_{k} \in I_{-} \backslash I}\left(\frac{\lambda_{n}-\lambda_{k}}{\lambda_{n}-\bar{\lambda}_{k}}\right)^{2} \exp \left[-\frac{1}{\pi i} \int_{I_{-}} \frac{\eta(s)}{s-\lambda} d s\right].
\end{aligned}
\end{equation}
Then for $(y,t)\in C(y_1,y_2,v_1,v_2)$ and $t\rightarrow \infty$, we have
\begin{equation}
\begin{aligned}
 &q(x,t)=\hat{q}(y,t)=\hat{q}_{sol}(y,t;\sigma_I)+m_1^2m_2^2-1,\\
 &y=x(y,t)+\frac{1}{2i}(\frac{m_3-m_4}{m_5-m_6}-1)+\mathcal{O}(t^{-1}),
\end{aligned}\label{soliton}
\end{equation}
where $C(y_1,y_2,v_1,v_2)$ defined by (\ref{cone}) and 
\begin{equation}
\begin{aligned}
   & m_1=[M^{(2)}_{out}(0)]_{11}T(0)-[M^{(2)}_{out}(0)]_{21}T(0),\\
   & m_2=[M^{(2)}_{out}(0)]_{12}T(0)-[M^{(2)}_{out}(0)]_{22}T(0),\\
& m_3=[M^{(2)}_{out}(0)]_{11}T(0)T_1+[E^{(2)}M^{(2)}_{out}(0)]_{11}T(0)t^{-\frac{1}{2}},\\
    & m_4=[M^{(2)}_{out}(0)]_{21}T(0)T_1+[E^{(2)}M^{(2)}_{out}(0)]_{21}T(0)t^{-\frac{1}{2}},\\
   & m_5=[M^{(2)}_{out}(0)]_{12}T(0)T_1+[E^{(2)}M^{(2)}_{out}(0)]_{12}T(0)t^{-\frac{1}{2}},\\
  &  m_6=[M^{(2)}_{out}(0)]_{22}T(0)T_1+[E^{(2)}M^{(2)}_{out}(0)]_{22}T(0)t^{-\frac{1}{2}}.
  \end{aligned}
\end{equation}
\end{thm}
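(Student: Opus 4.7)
The plan is to synthesize all the transformations performed in Sections 3--10 into a single formula for $\hat{M}(y,t,\lambda)$, expand it in powers of $\lambda$ near $\lambda=0$, and then feed the coefficients into the reconstruction formula (\ref{relation5}). Chaining (\ref{my}), (\ref{trans1}), (\ref{trans2}), (\ref{dec}), (\ref{EEE}) gives
\begin{equation*}
\hat{M}(y,t,\lambda)=M^{(3)}(\lambda)\,E(\lambda)\,M^{(2)}_{out}(\lambda)\,\mathcal{R}^{(2)}(\lambda)^{-1}\,T(\lambda)^{\sigma_3},\qquad \lambda\in\mathbb C\setminus U_{\pm\lambda_0}.
\end{equation*}
To evaluate the reconstruction formula it suffices to let $\lambda\to 0$ along a direction that lies in $\Omega_2\cup\Omega_5$; the imaginary axis is such a choice, and there $\mathcal{R}^{(2)}\equiv I$, so the $\mathcal{R}^{(2)}$--factor disappears from the asymptotic computation.

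Next I would plug in the local expansions collected in the previous sections: the Taylor expansion of $T(\lambda)$ around $\lambda=0$ from Proposition~3.1 (item 4), the expansions $M^{(3)}(\lambda)=M^{(3)}(0)+M^{(3)}_1\lambda+\mathcal O(\lambda^2)$ and $E(\lambda)=E(0)+E_1\lambda+\mathcal O(\lambda^2)$, together with the quantitative bounds
\begin{equation*}
\|M^{(3)}(0)-I\|,\ |M^{(3)}_1|=\mathcal O(t^{-1}),\qquad E(0)=I+E^{(1)}t^{-1/2}+\mathcal O(t^{-1}),\qquad E_1=E^{(2)}t^{-1/2}+\mathcal O(t^{-1}),
\end{equation*}
from Sections~9--10. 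Multiplying the four factors and keeping only the terms up to $\mathcal O(t^{-1})$ and up to order $\lambda$ yields
\begin{equation*}
\hat{M}(y,t,\lambda)=M^{(2)}_{out}(0)T(0)^{\sigma_3}+\bigl[M^{(2)}_{out}(\lambda)T(0)^{\sigma_3}T_1^{\sigma_3}+T(0)^{\sigma_3}E^{(2)}M^{(2)}_{out}(\lambda)t^{-1/2}\bigr]\lambda+\mathcal O(t^{-1}),
\end{equation*}
which matches the intermediate expansion given just before the theorem. The error term $\mathcal O(t^{-1})$ absorbs both the second-order Taylor remainders and the $E^{(1)}t^{-1/2}$ and $M^{(3)}(0)-I$ contributions, since each of them multiplies a factor that is already $\mathcal O(t^{-1/2})$ after the expansion is carried out.

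Having this expansion, the two formulas in (\ref{relation5}) are routine to evaluate. For $\hat q(y,t)=(\hat\mu_1\hat\mu_2)^2(y,t,0)-1$ I would compute $(\hat M_{11}+\hat M_{21})(0)$ and $(\hat M_{12}+\hat M_{22})(0)$, which are exactly the $m_1,m_2$ of the theorem. The $T(0)$ factor coming from the conjugation is precisely what converts $M^{(2)}_{out}$ into the ``dressed'' outer solution, and the identity (\ref{relation6}) shows that $m_1^2m_2^2$ accounts for the non-soliton contribution; the soliton part $\hat q_{sol}(y,t;\sigma_I)$ comes from the Proposition in Section~7 that replaces $M^{\Delta_{\lambda_0}^-}(\lambda;y,t|\sigma^\triangle_{out})$ by $M^{\Delta_{\lambda_0}^-}(\lambda;y,t|\sigma^\triangle_I)$ up to an exponentially small error $\mathcal O(e^{-\varepsilon(I)|t|})\subset\mathcal O(t^{-1})$. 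The formula for $x(y,t)$ is obtained analogously by computing the $\lambda$-coefficient of $\hat\mu_1/\hat\mu_2$: the numerators $m_3,m_4$ and denominators $m_5,m_6$ are exactly the first-order terms in the expansion above, and the $E^{(2)}t^{-1/2}$ parts record the soliton--radiation interaction.

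The main obstacle is the careful bookkeeping that shows every term neglected really is $\mathcal O(t^{-1})$; in particular, one has to verify that no cross product between the $t^{-1/2}$ error from $E(\lambda)$ and the $T_1\lambda$ term produces an uncontrolled contribution at order $\lambda$ once the ratio $\hat\mu_1/\hat\mu_2$ is formed, and one must check that the exponentially small replacement of $\sigma^\triangle_{out}$ by $\sigma^\triangle_I$ commutes with the expansion at $\lambda=0$ uniformly in the cone $C(y_1,y_2,v_1,v_2)$. Once these two points are in place, the theorem follows by simply collecting the pieces above into the form (\ref{soliton}).
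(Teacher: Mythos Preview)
Your proposal is correct and follows essentially the same route as the paper's own argument in Section~11: chain the transformations into $\hat M=M^{(3)}E\,M^{(2)}_{out}\,(\mathcal R^{(2)})^{-1}T^{\sigma_3}$, approach $\lambda=0$ along the imaginary axis so that $\mathcal R^{(2)}=I$, insert the Taylor expansions of $T$, $E$, $M^{(3)}$ from Sections~4, 9, 10 together with the $t^{-1/2}$ and $t^{-1}$ bounds, and then read off $\hat q$ and $x(y,t)$ from the reconstruction formula~(\ref{relation5}). The only extra content in your write-up is the explicit mention of the bookkeeping obstacles (cross terms in $\hat\mu_1/\hat\mu_2$ and uniformity of the $\sigma^\triangle_{out}\to\sigma^\triangle_I$ replacement in the cone), which the paper leaves implicit.
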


So far, the long time asymptotic expansion (\ref{soliton}) shows the soliton resolution of for the initial value problem of the Harry Dym equation which contains the soliton term by $N(I)$-soliton whose parameters are modulated by
a sum of localized soliton-soliton interactions as one moves through the cone and the second term coming from soliton-radiation interactions on continuous spectrum up to an residual error of order $\mathcal{O}(t^{-1})$ from the $\bar{\partial}$ equation.

\section*{Acknowledgements}
\hspace*{\parindent}
This work is sponsored by the National Natural Science Foundation of China (No. 11571079), Shanghai Pujiang Program (No. 14PJD007) and the Natural Science Foundation of Shanghai (No. 14ZR1403500), and the Young Teachers Foundation (No. 1411018) of Fudan university.

\bibliographystyle{unsrt}
\bibliography{huawei}

\end{document}